\newtheorem{thm}{\scshape{Theorem}}[section]
\newtheorem{cor}[thm]{\scshape{Corollary}}
\newtheorem{prop}[thm]{\scshape{Proposition}}
\newtheorem{lemma}[thm]{\scshape{Lemma}}
\newtheorem*{theorem*}{Theorem}
\newtheorem*{prop*}{Proposition}
\newtheorem*{cor*}{Corollary}
\theoremstyle{definition}
\newtheorem{dfn}[thm]{\scshape{Definition}}
\newtheorem{example}[thm]{\scshape{Example}}
\newtheorem{remark}[thm]{\scshape{Remark}}
\def\subset{\subseteq}
\def\={\cong}
\def\mb{\mathbf}
\def\mc{\mathcal}
\def\mbb{\mathbb}
\def\ranlge{\rangle}
\def\l{\lambda}
\def\End{\it End}
\def\rank{\it rank}
\def\C{\langle C \rangle}
\def\tc{\textcolor}
\def\ab{\it ab}
\title{Random nilpotent groups,  polycyclic presentations,  and Diophantine problems}
\author{Albert Garreta, Alexei Miasnikov, and Denis Ovchinnikov}
\begin{document}

\maketitle

\begin{abstract}
We introduce a model of random f.g., torsion-free, $2$-step nilpotent groups (in short, $\tau_2$-groups). To do so, we show that these are precisely the groups that admit a presentation of the form $ \label{tau2pres_0}\langle A, C \mid [a_i, a_j]= \prod_t {\scriptstyle c_t^{\scriptscriptstyle \lambda_{t,i,j}}} \ (i< j), \ [A,C]=[C,C]=1\rangle,$  where $A=\{a_1, \dots, a_n\}$, and $C=\{c_1, \dots, c_m\}$. Hence, one may select a random $\tau_2$-group $G$ by fixing $A$ and $C$, and then randomly choosing exponents $\lambda_{t,i,j}$ with $|\lambda_{t,i,j}|\leq \ell$, for some $\ell$.

  We prove that, if $m\geq n-1\geq 1$, then the following holds asymptotically almost surely, as $\ell\to \infty$: The ring of integers $\mathbb{Z}$ is e-definable in $G$, systems of equations over $\mathbb{Z}$ are reducible to systems over $G$ (and hence they are undecidable), the maximal ring of scalars of $G$ is $\mathbb{Z}$, $G$ is indecomposable as a direct product of non-abelian factors, and $Z(G)=\langle C \rangle$. If, additionally, $m \leq n(n-1)/2$, then $G$ is regular (i.e. $Z(G)\leq {\it Is}(G')$). This is not the case if $m > n(n-1)/2$. 
  
  In the last section of the paper we introduce similar models of random polycyclic groups and random f.g. nilpotent groups of any nilpotency step, possibly with torsion. We quickly see, however, that the latter yields finite groups a.a.s. 

\end{abstract}

\hypersetup{linkcolor=black}

\tableofcontents
\hypersetup{linkcolor=gray}

\section{Introduction}
\label{introduction}


In \cite{Duchin2}, Cordes, Duchin, Duong, Ho, and Sánchez introduced a model of random finitely generated nilpotent groups. Such model is the analog of the \emph{few-relators} and the \emph{density}  models of random finitely presented groups, where one takes a free group $F_m=F_m(a_1, \dots, a_m)$,  and then adds  a set of random relations $R$. Every relator is chosen among all words of a certain length $\ell$ on the alphabet $A^{\pm 1}=\{a_1^{\pm 1}, \dots, a_m^{\pm 1}\}$, with uniform probability. The length $\ell$ is thought of as an integer variable that tends to infinity, and the  number  of chosen relators is taken to be a function of $\ell$. For instance, $|R| = (2m+1)^{d\ell}$ ($0<d<1$) in the density model, whereas $|R|$ is constant in the few-relators model.
One can then calculate the probability $p_{\ell}$ that a group $G=F_m/\langle \langle R\rangle \rangle$  satisfies some property $P$, for a fixed $\ell$. The limit $p=\lim_{\ell \to \infty} p_{\ell}$, if it exists, is called the \emph{asymptotic probability} that $G$ satisfies $P$. If $p=1$, then  $G$ is said to satisfy $P$ \emph{asymptotically almost surely} (a.a.s.)  For example, a well-known result of Gromov \cite{Gromov} states that, in the density model,  $G$ is hyperbolic  if $d<1/2$, and  finite  if $d > 1/2$, a.a.s. See \cite{Olivier} for more information on random f.p.\ groups.

Since all finitely generated nilpotent groups are quotients of free nilpotent groups  by some finite set of relators, one can easily adapt the procedure above  to the class of f.g.\ nilpotent groups: it suffices to replace  $F_m$ by an $s$-step rank-$m$ free nilpotent group $N_{s,m}=N_{s,m}(A)$. Then, as before, one  chooses a set $R$ of random words of length $\ell$ on the alphabet $A^{\pm 1}$. These words are added as relators to $N_{s,m}$, yielding a  random f.g.\ nilpotent group $G= N_{s, m}/ \langle \langle R \rangle \rangle$. An alternative model  was introduced in \cite{Delp}, where f.g.\ torsion-free nilpotent groups  are considered as subgroups of unitriangular matrices.   



In \cite{Part_3}, we studied the structure and the Diophantine problem of random f.g.\ nilpotent groups generated according to the model used in \cite{Duchin2}. With similar intentions, in this paper we restrict our attention to the class of f.g., torsion-free, $2$-step nilpotent groups, which we call $\tau_2$-groups. We first introduce a  model of random $\tau_2$-groups $G$, and then we apply some of the techniques developed in  \cite{Part_3}  to obtain information regarding their Diophantine problem and their structure. 

The approach we use  relies on the fact that all f.g.\ nilpotent groups are polycyclic, and therefore, that they admit a polycyclic presentation (see \cite{Handbook1}). One can thus select   $\tau_2$-groups by randomly specifying  polycyclic presentations $P$, making sure that the $P$'s are chosen in a way that the resulting groups are $\tau_2$-groups,  and that any $\tau_2$-group can be chosen this way. The following result, which we prove in Section \ref{randomSec}, allows one to do so. One would like to extend this approach to the  class of all (torsion-free) f.g.\ nilpotent groups, but, as far as we know, all reasonable generalizations yield finite groups a.a.s. In this paper, the \emph{rank} of an abelian group is the minimum cardinality of a set of generators. 
{
\renewcommand{\thethm}{\ref{SpanThm}}
\begin{thm}
Let $G$ be a group, and let $n,m$  be two integers greater or equal than zero. Then the following two statements are equivalent. 1)  $G$ admits a (polycyclic) presentation of the form
\begin{align}
G=\langle a_1, \dots, a_n, c_1, \dots, c_m \mid \ &[a_i, a_j]= \prod_{t=1}^m c_t^{\l_{t}^{ij}} \ \left(1\leq i<j\leq n\right),\label{tau2pres}\\ &[a_i,c_j]=[c_k,c_r]=1  \ \hbox{for all}\ i,j,k,r \rangle. \nonumber
\end{align}
2) $G$ is a $\tau_2$-group satisfying
 $\rank(G/Z(G)) + \rank(Z(G)) = n + m$,   and
 $\rank(G') \leq m \leq \rank(Z(G))$. 
\end{thm}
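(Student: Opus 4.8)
The plan is to prove the two implications separately, treating $(1)\Rightarrow(2)$ by an explicit central-extension model and $(2)\Rightarrow(1)$ by a careful choice of generators governed by the rank inequalities.

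For $(1)\Rightarrow(2)$ I would first exhibit a concrete group realizing the presentation. Let $\beta\colon \mathbb{Z}^n\times\mathbb{Z}^n\to\mathbb{Z}^m$ be the bilinear map $\beta(\mathbf{u},\mathbf{u}')=\sum_{i<j}\lambda^{ij}u_iu_j'$, where $\lambda^{ij}=(\lambda_1^{ij},\dots,\lambda_m^{ij})$, and define $G_0$ on the set $\mathbb{Z}^n\times\mathbb{Z}^m$ by $(\mathbf{u},\mathbf{v})(\mathbf{u}',\mathbf{v}')=(\mathbf{u}+\mathbf{u}',\mathbf{v}+\mathbf{v}'+\beta(\mathbf{u},\mathbf{u}'))$. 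A direct check shows $G_0$ is a group in which $\{\mathbf 0\}\times\mathbb{Z}^m$ is central, $[(\mathbf{e}_i,\mathbf 0),(\mathbf{e}_j,\mathbf 0)]=(\mathbf 0,\lambda^{ij})$ for $i<j$, and $(\mathbf u,\mathbf v)^k=(k\mathbf u, k\mathbf v+\binom{k}{2}\beta(\mathbf u,\mathbf u))$; the last identity makes $G_0$ torsion-free of Hirsch length $n+m$. Von Dyck's theorem yields a surjection $G\twoheadrightarrow G_0$ from the presented group, and a collection argument — using the relations to bring every word into the normal form $a_1^{x_1}\cdots a_n^{x_n}c_1^{z_1}\cdots c_m^{z_m}$ and then reading off its coordinates in $G_0$ — shows this surjection is injective. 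Hence $G\cong G_0$ is a torsion-free, $2$-step nilpotent $\tau_2$-group.

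Next I would compute the invariants. Since $\langle C\rangle\cong\mathbb{Z}^m$ is free abelian and central, the commutator map descends to an alternating form $\omega\colon\mathbb{Z}^n\times\mathbb{Z}^n\to\mathbb{Z}^m$ with $\omega(\mathbf e_i,\mathbf e_j)=\lambda^{ij}$; then $G'=\langle \lambda^{ij}:i<j\rangle\leq\langle C\rangle$, so $\rank(G')\leq m$, and an element is central exactly when its $\mathbb{Z}^n$-part lies in the radical $R=\{\mathbf u:\omega(\mathbf u,\cdot)=0\}$. As $\mathbb{Z}^m$ is torsion-free, $R$ is an isolated (pure) subgroup of $\mathbb{Z}^n$, so both $Z(G)/\langle C\rangle\cong R$ and $G/Z(G)\cong\mathbb{Z}^n/R$ are free abelian, of ranks $\rank(R)$ and $n-\rank(R)$. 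Therefore $\rank(Z(G))=m+\rank(R)\geq m$ and $\rank(G/Z(G))+\rank(Z(G))=(n-\rank(R))+(m+\rank(R))=n+m$, which is $(2)$.

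For the converse $(2)\Rightarrow(1)$ I would first record the structure of a $\tau_2$-group $G$. Because $G$ is torsion-free of class $2$, the identity $[x^k,y]=[x,y]^k$ together with torsion-freeness of $G'\leq Z(G)$ forces $G/Z(G)$ to be torsion-free, so $Z(G)\cong\mathbb{Z}^{z}$ and $G/Z(G)\cong\mathbb{Z}^{n'}$ are free abelian with $z=\rank(Z(G))$ and $n'=\rank(G/Z(G))$. The hypotheses read $\rank(G')\leq m\leq z$ and $n+m=n'+z$, whence $n\geq n'$. The idea is to use exactly $n'$ of the $a$'s as genuine generators and to hide the remaining $n-n'=z-m$ of them, together with the $m$ $c$'s, inside $Z(G)$. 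Concretely I would put $G'\leq Z(G)$ into Smith (stacked) normal form: choose a basis $e_1,\dots,e_z$ of $Z(G)$ and integers $d_1\mid\cdots\mid d_g$ (with $g=\rank(G')\leq m$) so that $d_1e_1,\dots,d_ge_g$ is a basis of $G'$. Setting $c_t:=e_t$ for $1\leq t\leq m$ makes $\langle C\rangle=\langle e_1,\dots,e_m\rangle$ a rank-$m$ direct summand of $Z(G)$ containing $G'$, with complement $\langle e_{m+1},\dots,e_z\rangle$ of rank $z-m=n-n'$. Lifting a basis of $G/Z(G)$ to $a_1,\dots,a_{n'}$ and setting $a_{n'+1}:=e_{m+1},\dots,a_n:=e_z$, one verifies that $A\cup C$ generates $G$, that the $c$'s are central, and — crucially because $G'\leq\langle C\rangle$ — that each $[a_i,a_j]$ is a product of $c$'s. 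Thus $G$ satisfies the defining relations.

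It remains to see that these relations present $G$ rather than a proper quotient, and this faithfulness step is where I expect the main obstacle to lie. Let $\tilde G$ be the presented group; the relations hold in $G$, giving a surjection $\varphi\colon\tilde G\twoheadrightarrow G$. By the already-proved implication $(1)\Rightarrow(2)$, $\tilde G$ is torsion-free of Hirsch length $n+m=n'+z$, while additivity of Hirsch length gives $h(G)=h(G/Z(G))+h(Z(G))=n'+z$. Hence $h(\ker\varphi)=h(\tilde G)-h(G)=0$, so $\ker\varphi$ is a finite normal subgroup of the torsion-free group $\tilde G$, forcing $\ker\varphi=1$ and $\tilde G\cong G$. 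The delicate points are that the two inequalities are used in an essential, non-wasteful way — the bound $\rank(G')\leq m$ to fit $G'$ inside $\langle C\rangle$, and $m\leq z$ together with the sum condition to leave room for the $n-n'$ central padding generators — and that the Hirsch-length bookkeeping matches exactly so that $\varphi$ is forced to be an isomorphism. I would finally check that the degenerate cases ($G$ abelian, $n'=0$, or $G'=1$) are covered by the same construction.
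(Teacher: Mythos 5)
Your proof is correct. Its overall architecture coincides with the paper's --- both directions rest on a normal-form (Malcev basis) statement for groups with presentation \eqref{tau2pres}, and your converse construction (a stacked basis $e_1,\dots,e_z$ of $Z(G)$ with $G'\leq\langle e_1,\dots,e_m\rangle$, taking $C=\{e_1,\dots,e_m\}$ and padding $A$ with the leftover central generators $e_{m+1},\dots,e_z$) is in substance exactly the paper's argument, which reaches the same adapted basis by column operations on the relation matrix and then absorbs the surplus $c$'s into $A$. Where you genuinely diverge is in the verification tools. For $(1)\Rightarrow(2)$ the paper realizes the presented group as a quotient of $N_{2,n}(A)\times\mathbb{Z}^m(C)$ and proves uniqueness of coordinates by expanding a product of conjugated relators (Lemma \ref{malcevBa}), then identifies $Z(G)$ by showing it admits a basis of the form $C\cup D$ (Corollary \ref{superCor}); your explicit cocycle model on $\mathbb{Z}^n\times\mathbb{Z}^m$ together with the radical of the induced alternating form reaches the same conclusions more directly, and it covers the degenerate cases $n\leq 1$ or $m=0$ uniformly, which the paper treats separately (Lemma \ref{degenerate_cases}). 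For faithfulness of the constructed presentation in $(2)\Rightarrow(1)$, the paper rewrites the original defining relators of $G$ into normal form inside the presented group (Lemma \ref{neatFormGLemma}), whereas you compare Hirsch lengths and use torsion-freeness of the presented group to force the kernel to vanish; both are sound, and yours buys a shorter finish at the price of invoking additivity of the Hirsch length, which the paper never needs.
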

\addtocounter{thm}{-1}
}
%
%

%
Hence, by Theorem \ref{SpanThm}, a $\tau_2$-group  $G$ can be  chosen by randomly selecting a presentation of the form \eqref{tau2pres}, which we call a \emph{$\tau_2$-presentation}. To do so,  fix sets $A=\{a_1, \dots, a_n\}$, $C=\{c_1, \dots, c_m\}$, and an integer $\ell$, and then  randomly specify exponents $\l_{t}^{ij}$ such that $|\l_{t}^{ij}|\leq \ell$ (with uniform probability).  One then can study properties of the groups obtained this way as $\ell \to \infty$, in a similar fashion to what we explained previously. 

\begin{remark}[\textbf{Why   restrict to the class of $\tau_2$-groups}] 
In the last section of the paper we introduce a more natural and    general model of random f.g.\ nilpotent groups (of arbitrary nilpotency step, possibly with torsion). However, in Lemma \ref{nopol1}  we show that such model yields finite groups asymptotically almost surely. This is the reason why we have particularized it to $\tau_2$-groups.
 
In the same section we also provide a  model for the class of all polycyclic groups. In it,  these are obtained by randomly choosing polycyclic presentations. In this case it is proved that the resulting groups have finite abelianization a.a.s.\ (Lemma \ref{nopol2}). 
\end{remark}

One of the main aspects we study about random $\tau_2$-groups is their Diophantine problem:
%
%
%
\begin{dfn}\label{DiophDfn}
%
%
The \emph{Diophantine problem} over an algebraic structure $\mathcal{A}$, denoted $\mc{D}(\mc{A})$, refers to the task of determining what systems of equations over $\mc{A}$ ($\mathcal{A}$-systems) have solutions. An algorithm $L$ is said to \emph{solve} $\mc{D}(\mc{A})$ if, given an $\mc{A}$-system $S$,  determines whether $S$ has a solution or not. If such an algorithm exists, then
$\mc{D}(\mc{A})$ is called \emph{decidable}. Otherwise, $\mc{D}(\mc{A})$ is called  \emph{undecidable}.  Furthermore, $\mc{D}(\mc{A})$ is said to be \emph{reducible} to  $\mc{D}(\mc{M})$, for $\mc{M}$ another structure,  if a solution to $\mc{D}(\mc{M})$ (if it existed) could be used  as a subroutine to solve $\mc{D}(\mc{A})$. 
\end{dfn}
 
In this paper, a reduction always takes an $\mc{A}$-system $S$  as input, and  it constructs an  $\mc{M}$-system $S_{\mc{M}}$ that has a solution if and only if $S$ has. Notice that if $\mc{D}(\mathbb{Z})$ is reducible to $\mc{D}(\mc{M})$, then  $\mc{D}(\mc{M})$ is undecidable due to the negative answer to Hilbert's 10th problem, which states that  $\mc{D}(\mathbb{Z})$ is undecidable.  This idea was used in \cite{Romankov} (1979) and \cite{Duchin} (2014) to prove that the Diophantine problem is undecidable over a certain $4$-step nilpotent group, and over any non-abelian free nilpotent group, respectively. We refer to \cite{Duchin} for a survey of results on this topic. 

The following is one of the  main results of this paper:
{
\renewcommand{\thethm}{\ref{mainthm}}
\begin{thm}
Suppose $G$ is a random $\tau_2$-group obtained as above, with $m\geq n-1 \geq 1$. Then the following holds a.a.s.: $Z(G)=\C$, the ring $\mathbb{Z}$ is e-definable in $G$, $\mc{D}(\mathbb{Z})$ is reducible to $\mc{D}(G)$, $\mc{D}(G)$ is undecidable,  $\mathbb{Z}$ is the maximal ring of scalars of $G$, and $G$ is directly indecomposable into non-abelian factors.
\end{thm}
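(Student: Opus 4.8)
The plan is to translate each of the six conclusions into a linear-algebraic condition on the family of alternating forms encoded by the presentation, and then to show that each such condition fails only on the zero set of a nonzero polynomial in the exponents, so that it holds for all but a polynomially small proportion of the choices via the Schwartz--Zippel Lemma. Concretely, for $1\le t\le m$ let $\Lambda_t=(\lambda_t^{ij})$ be the antisymmetric $n\times n$ integer matrix read off the relators $[a_i,a_j]=\prod_t c_t^{\lambda_t^{ij}}$, and let $\beta\colon \mathbb{Z}^n\times\mathbb{Z}^n\to\mathbb{Z}^m$ be the alternating map $\beta(x,y)=\big(x^{\mathsf{T}}\Lambda_1 y,\dots,x^{\mathsf{T}}\Lambda_m y\big)$; this is exactly the commutator pairing induced on $G/Z(G)$. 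Since the $\lambda_t^{ij}$ are chosen uniformly and independently in $\{-\ell,\dots,\ell\}$, if a bad event lies in the zero set of a nonzero polynomial $P$ of degree bounded independently of $\ell$, then its probability is at most $\deg(P)/(2\ell+1)\to 0$; as only finitely many conditions are imposed, their conjunction also holds a.a.s.

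First I would handle $Z(G)=\langle C\rangle$. An element $\prod_i a_i^{x_i}\cdot(\text{central})$ is central iff it commutes with every $a_k$, i.e.\ iff $\sum_i x_i\lambda_t^{ik}=0$ for all $t,k$, equivalently $x$ lies in the common left kernel of the $\Lambda_t$. Hence $Z(G)=\langle C\rangle$ iff the $n\times mn$ matrix $[\Lambda_1\mid\cdots\mid\Lambda_m]$ has full row rank $n$, i.e.\ some $n\times n$ minor is nonzero. To see that this minor is a nonzero polynomial it suffices to exhibit one integer assignment making it nonzero: if $n$ is even take $\Lambda_1$ a standard symplectic form of rank $n$; if $n$ is odd then $n\ge 3$ and $m\ge n-1\ge 2$, so take $\Lambda_1$ of rank $n-1$ with kernel $\langle e_n\rangle$ and $\Lambda_2$ with nonzero $n$-th row, which kills the remaining kernel direction. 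Schwartz--Zippel then gives $Z(G)=\langle C\rangle$ a.a.s.

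The remaining statements I would route through the commutator form's symmetries. On $\bar G=G/Z(G)\cong\mathbb{Z}^n$ (using the previous paragraph) the maximal ring of scalars is contained in the $\mathbb{Z}$-module of endomorphisms $T\in M_n(\mathbb{Z})$ that are self-adjoint for every form, $T^{\mathsf{T}}\Lambda_t=\Lambda_t T$ for all $t$; these are linear equations in the entries of $T$ with coefficients linear in the $\lambda_t^{ij}$, and $\mathbb{Z}\cdot I$ always satisfies them. I would show that a.a.s.\ this self-adjoint space is exactly $\mathbb{Q}\cdot I$ over $\mathbb{Q}$: ``dimension $\ge 2$'' is a rank drop of the coefficient matrix, cut out by the vanishing of certain minors, and exhibiting a single family $\Lambda_1,\dots,\Lambda_m$ (here $m\ge n-1$ is used to have enough forms) with no common self-adjoint endomorphism besides scalars shows these minors are not identically zero, whence Schwartz--Zippel applies. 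Since the maximal ring of scalars then lies between $\mathbb{Z}\cdot I$ and $\mathbb{Q}\cdot I\cap M_n(\mathbb{Z})=\mathbb{Z}\cdot I$, it equals $\mathbb{Z}$. Once the form is non-degenerate with maximal ring of scalars $\mathbb{Z}$, the criteria of \cite{Part_3} make $\mathbb{Z}$ e-definable in $G$ (realizing integer multiplication through $[u^x,v^y]=[u,v]^{xy}$ inside a cyclic central subgroup); e-definability at once yields a reduction of $\mathcal{D}(\mathbb{Z})$ to $\mathcal{D}(G)$, and the undecidability of $\mathcal{D}(G)$ follows from the negative solution to Hilbert's tenth problem. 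Direct indecomposability into non-abelian factors also follows: a decomposition $G=G_1\times G_2$ with both factors non-abelian gives an orthogonal splitting $\mathbb{Z}^n=V_1\oplus V_2$ of $\beta$ whose projection $e_1$ is a nontrivial integral self-adjoint idempotent, impossible since the self-adjoint space is $\mathbb{Q}\cdot I$, which contains no idempotents other than $0$ and $I$.

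I expect the main obstacle to be the witness for the ring-of-scalars step: one must produce explicit antisymmetric integer matrices $\Lambda_1,\dots,\Lambda_m$ with $m\ge n-1$ admitting no common self-adjoint endomorphism other than the scalars, and verify that the relevant minor of the (linear-in-$\lambda$) coefficient matrix is genuinely nonzero there. This is precisely where the lower bound $m\ge n-1$ enters, since a single alternating form has a large space of self-adjoint endomorphisms and only sufficiently many sufficiently generic forms cut this space down to $\mathbb{Q}\cdot I$. Establishing that this explicit family works, and reconciling the self-adjoint-endomorphism model used above with the formal definition of the maximal ring of scalars from \cite{Part_3}, is the delicate point; the probabilistic steps are then routine applications of Schwartz--Zippel, and the Diophantine consequences are formal.
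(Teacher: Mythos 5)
Your probabilistic skeleton is sound and, for two of the six conclusions, you take a genuinely different and workable route from the paper: the paper never invokes Schwartz--Zippel but instead counts good presentations directly by choosing the vectors $\bm{\lambda}_{i,j}$ one at a time and bounding the bad choices at each step (Lemma 4.5), and it derives both the maximal ring of scalars and direct indecomposability from c-smallness of the $a_i$'s (Theorems 3.4 and 3.5) rather than from the self-adjoint endomorphism algebra of the forms $\Lambda_t$. Your characterization of $Z(G)=\langle C\rangle$ via full row rank of $[\Lambda_1\mid\cdots\mid\Lambda_m]$, with the symplectic/rank-$(n-1)$ witnesses, is correct, and the idempotent argument for indecomposability is a clean substitute for the paper's $\mathbb{Z}^2$-ring-of-scalars contradiction.

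There are, however, two genuine gaps. The serious one is the e-definability step: you assert that non-degeneracy of $\beta$ plus maximal ring of scalars $\mathbb{Z}$ makes $\mathbb{Z}$ e-definable, but no such criterion exists in this paper or is obviously true. The actual mechanism (Theorem 3.2 here) requires two non-commuting \emph{c-small} elements $a,b$: one e-defines the copy of $\mathbb{Z}$ as $[a,C_G(b)]$, and the identification of this set with $\{[a,b]^t\}$, as well as the multiplication trick $[a^{t_1}z_1,b^{t_2}z_2]=[a,b]^{t_1t_2}$, both collapse unless $C_G(a)=\{a^tz\}$ and $C_G(b)=\{b^tz\}$. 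Your conditions (full rank of the stacked matrix, trivial self-adjoint algebra) do not deliver this: c-smallness of $a_k$ is the stronger statement that the single $m\times(n-1)$ slice $M_{P,k}=(-\bm{\lambda}_{1,k}^T\cdots\bm{\lambda}_{k,n}^T)$ has rank $n-1$, and this is exactly where the hypothesis $m\ge n-1$ is forced (the slice cannot have rank $n-1$ otherwise). The fix is easy within your framework --- full rank of each $M_{P,k}$ is again the non-vanishing of a minor, polynomial in the $\lambda$'s, with an evident witness --- but without it the chain from e-definability through undecidability is unsupported. The second gap is one you flag yourself: the witness family of alternating forms with no common self-adjoint endomorphisms besides scalars is never produced, so the Schwartz--Zippel step for the ring of scalars is incomplete. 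Note that once you add c-smallness of all the $a_k$'s, you can bypass that construction entirely by quoting the paper's Theorem 3.4, which derives the maximal ring of scalars $\mathbb{Z}$ directly from c-smallness and $[a_i,a_j]\neq 1$.
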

\addtocounter{thm}{-1}
}
Very roughly, a structure $\mc{A}=(A; f_1, \dots)$ is \emph{e-definable} in another structure $\mathcal{B}$ if the predicates "$x \in A$" and "$z\in A$ is in the image of $f_i$" can be effectively  expressed as systems of equations over $\mathcal{B}$ (see \ref{edfnDfn} for a formal definition). In this case, $\mc{D}(\mc{A})$ is reducible to $\mc{D}(\mc{B})$.  Hence, by the negative answer to Hilbert's 10th problem, $\mc{D}(\mc{B})$ is undecidable for any structure $\mc{B}$ in which  the ring $\mathbb{Z}$ is e-definable.

We also study the structure of  $Z(G)$ and $G'$, and we investigate whether  $Z(G)\leq {\it Is}(G')=\left\{ g \in G \mid g^t\in G' \ \hbox{for some} \ n\in \mathbb{Z}\backslash \{0\} \right\}$, in which case $G$ is called \emph{regular}:
{
\renewcommand{\thethm}{\ref{regularitythm}}
\begin{thm}
Let  $G$ be a  $\tau_2$-group $G$ obtained by randomly choosing a presentation $P\in \mc{P}(n,m,\ell)$ as in \eqref{tau2pres}, with $n\geq 2$ and $m\geq 1$. Then the following holds a.a.s.:
\begin{enumerate}
\item If $m\leq n(n-1)/2$, then $G'$ has finite index in $\C$.  If, additionally, $m\geq n-1$, then $G$ is regular.
\item If $m > n(n-1)/2$, then the set $\{[a_i, a_j] \mid i<j\}$ is a basis of $G'$, $G'$ has infinite index in $\C$, and $G$ is not regular. These last two properties hold always, and not only a.a.s.
\end{enumerate}
\end{thm}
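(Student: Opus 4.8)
The plan is to reduce the entire statement to the linear algebra of a single integer matrix. Because $G$ is a torsion-free $2$-step nilpotent group with presentation \eqref{tau2pres}, the central subgroup $\C=\langle c_1,\dots,c_m\rangle$ is free abelian of rank $m$, and $G'$ is generated by the commutators $[a_i,a_j]$ with $i<j$, each of which lies in $\C$ and is encoded by its exponent vector $\vec{\l}^{\,ij}=(\l_1^{ij},\dots,\l_m^{ij})\in\mathbb{Z}^m$. Writing these $N:=n(n-1)/2$ vectors as the rows of a matrix $\Lambda\in\mathbb{Z}^{N\times m}$, the subgroup $G'\le\C\cong\mathbb{Z}^m$ is exactly the $\mathbb{Z}$-row span of $\Lambda$. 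Hence: (i) $G'$ has finite index in $\C$ iff the rows span $\mathbb{Q}^m$, i.e.\ $\rank\Lambda=m$; (ii) $G'$ has infinite index iff $\rank\Lambda<m$; and (iii) the set $\{[a_i,a_j]\mid i<j\}$ is a basis of the free abelian group $G'$ iff the $N$ exponent vectors are $\mathbb{Q}$-linearly independent, i.e.\ $\rank\Lambda=N$. The entries $\l_t^{ij}$ of $\Lambda$ are the independent random integers, uniform in $\{-\ell,\dots,\ell\}$, that define the model.

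I first dispose of the two assertions in item 2 claimed to hold always. If $m>N$ then $\rank\Lambda\le N<m$ for every choice of exponents, so by (ii) $G'$ has infinite index in $\C$ unconditionally. For non-regularity, note that $\C\le Z(G)$ always, whereas ${\it Is}(G')\cap\C=\{g\in\C\mid g^t\in G'\text{ for some }t\neq 0\}$ is an isolated (pure) subgroup of $\C\cong\mathbb{Z}^m$ of the same rank as $G'$, namely $\rank\Lambda\le N<m=\rank\C$. Thus $\C\not\le{\it Is}(G')$, so $Z(G)\not\le{\it Is}(G')$ and $G$ is not regular, again for every choice of exponents.

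The remaining claims are genericity statements about $\rank\Lambda$, which follow from the Schwartz-Zippel Lemma since the entries $\l_t^{ij}$ are distinct random integers uniform in $\{-\ell,\dots,\ell\}$. In the regime $m\le N$ of item 1, the event $\rank\Lambda<m$ is contained in the event that a fixed $m\times m$ minor of $\Lambda$ vanishes; this minor is the determinant of a matrix whose entries are distinct variables, hence a nonzero polynomial of degree $m$, and Schwartz-Zippel bounds the probability of its vanishing by $m/(2\ell+1)\to 0$. Therefore $\rank\Lambda=m$, i.e.\ $G'$ has finite index in $\C$, a.a.s. Symmetrically, in the regime $m>N$ of item 2 the event that the rows of $\Lambda$ are $\mathbb{Q}$-linearly dependent is contained in the vanishing of a fixed $N\times N$ minor, again a nonzero polynomial of degree $N$; so $\rank\Lambda=N$ a.a.s., and by (iii) the commutators $\{[a_i,a_j]\mid i<j\}$ form a basis of $G'$ a.a.s.

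Finally, for the regularity assertion in item 1, assume $n-1\le m\le N$ (which forces $n\ge 2$). By Theorem \ref{mainthm}, applicable since $m\ge n-1\ge 1$, a.a.s.\ $Z(G)=\C$; and by the previous paragraph, a.a.s.\ $G'$ has finite index in $\C$. On the intersection of these two a.a.s.\ events, $\C/G'$ is finite, so every element of $\C$ has a nonzero power in $G'$, giving $\C\le{\it Is}(G')$ and hence $Z(G)=\C\le{\it Is}(G')$; that is, $G$ is regular. I expect the main point requiring care to be not the probabilistic step, which Schwartz-Zippel handles directly, but the structural dictionary of the first paragraph: one must confirm that the presentation \eqref{tau2pres} introduces no hidden relations among the $c_t$, so that $\C$ is genuinely free abelian of rank $m$ and $G'$ is faithfully the row span of $\Lambda$. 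This is where Theorem \ref{SpanThm} and the torsion-freeness of $\tau_2$-groups must be invoked.
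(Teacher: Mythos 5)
Your proof is correct, and its overall architecture coincides with the paper's: both arguments reduce everything to the rank of the $n(n-1)/2\times m$ integer matrix of exponent vectors, identify $G'$ with its row span inside the free abelian group $\C$ (which, as you rightly flag, rests on Theorem \ref{SpanThm} and Corollary \ref{superCor} guaranteeing that $(A;C)$ is a Malcev basis, so that $\C$ is free abelian of rank $m$ with no hidden relations), and obtain regularity in Item 1 by combining the finite index of $G'$ in $\C$ with $Z(G)=\C$ from Theorem \ref{mainthm}. The one genuine divergence is the probabilistic ingredient: the paper shows that the matrix attains the maximal possible rank $r=\min\{m,\,n(n-1)/2\}$ a.a.s.\ by iteratively counting, via Lemma \ref{countlemma}, how many choices of each successive row keep the rows linearly independent, and comparing the resulting degree-$mn(n-1)/2$ polynomial lower bound against the total count $(2\ell+1)^{mn(n-1)/2}$; you instead observe that rank deficiency forces a fixed maximal minor to vanish and apply Schwartz--Zippel to that single determinant, getting a probability bound of $\min\{m,N\}/(2\ell+1)\to 0$. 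Your route is shorter and self-contained for this theorem, while the paper's counting lemma is shared infrastructure with the proof of Theorem \ref{mainthm}. A second, smaller difference lies in the ``always true'' non-regularity claim of Item 2: the paper performs column operations to change the basis $C$ and exhibit a generator $c_m$ no nonzero power of which lies in $G'$, whereas you compare the rank of the isolated subgroup ${\it Is}(G')\cap\C$ (equal to $\rank(G')\leq n(n-1)/2<m$) with $\rank(\C)=m$; both are valid, and your version avoids the basis change at the cost of quietly using that the isolator of a subgroup of $\mathbb{Z}^m$ is a subgroup of the same rank, which is standard.
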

\addtocounter{thm}{-1}
}

One of the main ingredients in the proofs of Theorems \ref{mainthm} and \ref{regularitythm} is the notion of c-small elements: We say that an element $g\in G$ is \emph{centralizer-small} (or just \emph{c-small}) if its  centralizer (the set of elements that commute with $g$) is $\{g^tz \mid t \in \mathbb{Z},\ z\in Z(G)\}$.  
It turns out that:
{
\renewcommand{\thethm}{\ref{mainThmEq2}}
\begin{thm}
$\mathbb{Z}$ is e-definable in any $\tau_2$-group having two non-commuting c-small elements.
\end{thm}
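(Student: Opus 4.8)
The plan is to use the two given non-commuting c-small elements, call them $a$ and $b$, to carve out a copy of $\mathbb{Z}$ inside the center of $G$, exploiting the bilinearity of the commutator map. Set $c=[a,b]$. Since $a$ and $b$ do not commute, $c\neq 1$; and since $G$ is torsion-free with $c\in G'\leq Z(G)$, the subgroup $\langle c\rangle$ is infinite cyclic. It is this subgroup that will serve as the domain of the interpreted ring, via the isomorphism $p\mapsto c^p$. The two facts I will lean on throughout are that in a $2$-step nilpotent group the commutator is bilinear modulo the center, so that $[a^p,b^q]=c^{pq}$, $[a^p,b]=c^p$, and $[a,b^q]=c^q$; and that c-smallness pins down the centralizers, namely $C(a)=\langle a\rangle Z(G)$ and $C(b)=\langle b\rangle Z(G)$.

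First I would e-define the domain. I claim $x\in\langle c\rangle$ if and only if the system $[y,a]=1,\ [y,b]=x$ (in the single auxiliary variable $y$) is consistent: if $y$ is a solution then $y\in C(a)=\langle a\rangle Z(G)$, so $y=a^p z$ with $z\in Z(G)$, whence $x=[y,b]=[a^p,b]=c^p$; conversely $y=a^p$ witnesses $x=c^p$. This shows $\langle c\rangle$ is e-definable, and the same computation shows that for a fixed $x=c^p$ the solutions $y$ form exactly the coset $a^p Z(G)$, i.e. the auxiliary variable recovers $a^p$ up to an irrelevant central factor. Addition is then immediate: under $p\mapsto c^p$, integer addition becomes the group multiplication of the center, so the graph of $+$ is e-defined by the single equation $x_3=x_1 x_2$.

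The crux is multiplication, and here the recovery step above does the work. Given $x_1=c^p$ and $x_2=c^q$, I would declare $x_3$ to be their product precisely when the system
\[
[y,a]=1,\quad [y,b]=x_1,\quad [w,b]=1,\quad [a,w]=x_2,\quad [y,w]=x_3
\]
is satisfiable in the auxiliary variables $y,w$. Indeed, any solution forces $y=a^p z$ and $w=b^q z'$ with $z,z'\in Z(G)$ (using c-smallness of $a$ and of $b$, together with the infinite order of $c$ to read off the exponents $p,q$ uniquely from $x_1,x_2$), and then bilinearity gives $x_3=[y,w]=[a^p,b^q]=c^{pq}$; conversely $y=a^p$, $w=b^q$ witness $x_3=c^{pq}$. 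Thus on the domain this system picks out exactly the graph of integer multiplication. Together with the identifications $0\mapsto 1$ and $1\mapsto c=[a,b]$, this exhibits $\langle c\rangle$---with group multiplication playing the role of addition and the relation above playing the role of multiplication---as an e-interpreted copy of the ring $\mathbb{Z}$ in $G$, using $a$ and $b$ as parameters.

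The main obstacle is precisely the definition of multiplication: the domain and addition are essentially formal, but to realize $pq$ one must first reconstruct the group elements $a^p$ and $b^q$ from the central data $c^p$ and $c^q$ and then feed them back into a single commutator. This is what makes c-smallness indispensable: without it the centralizer of $a$ could be strictly larger than $\langle a\rangle Z(G)$, the system $[y,a]=1,\ [y,b]=x_1$ would fail to determine the exponent $p$, and the putative product $[y,w]$ would cease to be a well-defined element of $\langle c\rangle$. The only points requiring care are checking that the central ambiguities $z,z'$ drop out of every commutator (immediate because $z,z'$ are central) and that the exponents are genuinely unique, which is where torsion-freeness of $G$ enters, through the infinite order of $c$.
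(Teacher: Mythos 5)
Your proposal is correct and follows essentially the same route as the paper: the domain $\langle c\rangle=\{[a,b]^t\}$ is carved out by the system $[y,a]=1,\ [y,b]=x$, addition is group multiplication, and multiplication of exponents is recovered by reconstructing $a^p$ and $b^q$ from their centralizer conditions and taking $[y,w]$, exactly as in the paper's system \eqref{odot}. The only cosmetic difference is that the paper also records the (trivial) e-definability of negation to present $(\mathbb{Z};+,-,\cdot,0,1)$ as a structure.
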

\addtocounter{thm}{-1}
}
The  techniques we use to prove Theorem \ref{mainThmEq2} have a resemblance with some arguments from Duchin, Liang, and Shapiro in \cite{Duchin}, and from Romankov in \cite{Romankov}.
%
%
%
%
%
%
%
%

Another important step towards proving Theorem \ref{mainthm} is given in  Theorem \ref{CSmallMaxZ}, where we show that if all the $a_i$'s of $G$ are c-small, then the maximal ring of scalars   of $G$ is $\mathbb{Z}$ (see Definition \ref{ringScalars}). A consequence of this fact  is that $G$ is indecomposable as a direct product of non-abelian subgroups (Proposition \ref{dirIndec}).

In views of these results, one can try to prove Theorem \ref{mainthm} by checking whether or not the $a_i$'s are c-small (notice, Item 1 of Theorem \ref{regularitythm} uses existence of c-small elements as well). We will see that this is the case, a.a.s., when $m\geq n-1$. The case $m< n-1$ remains open. The main difficulty in such case is to find an asymptotic description of $Z(G)$. Another problem that is left open  is that of determining what conclusions of  Theorems \ref{mainthm} and \ref{regularitythm} still hold when $m<n-1$, a.a.s.

\section{Preliminaries}

\subsection{Nilpotent groups}\label{nilpSub}
Following standard conventions, we call the element $[g,h]=g^{-1}h^{-1}gh$ of a group $G$  the \emph{commutator} of $g$ and $h$, and we denote the subgroup of $G$ formed by all its commutators by $G'$.  More generally, we define  inductively $G_1=G$, $G_2= G' =\langle[G,G]\rangle= \langle\{[g,h] \mid g,h\in G\}\rangle$, and  $G_{n+1} = \langle[ G, G_n]\rangle$.  The subnormal series $G_1\ \unrhd \ G_2\  \unrhd \dots$ is called the \emph{lower central series of $G$}. 
If $G_{s+1}=1$ for some $s$,  then $G$ is said to be  \emph{$s$-step nilpotent}, or just \emph{nilpotent}.  For example, $G$ is $1$-step nilpotent if and only if it is an abelian group, and it is $2$-step nilpotent if and only if $$G_3=\langle[G,[G, G]]\rangle=\langle\{[g_1,[g_2,g_3]] \ | \ g_i \in G\}\rangle=1,$$ which   is the same as saying that all  commutators belong to the center  of $G$, i.e.\ to $Z(G) = \{g \in G \mid [g,h]=1 \ \hbox{for all} \ h\in G\}$. The element $[g_1, [g_2, g_3]]$ is called a $3$-fold commutator, and it is usually denoted  by $[g_1, g_2, g_3]$. Inductively, an $n$-fold commutator is defined as $[g_1, \dots, g_n]=[g_1, [g_2, \dots, g_n]]$. 


We say that $G$ is a \emph{$\tau_2$-group} if it is finitely generated,  torsion-free, and  $2$-step nilpotent.  Observe that, in this case, $G/Z(G)$ and $Z(G)$ are free abelian groups of finite rank. The \emph{rank} of an abelian group $H$ is the minimum cardinality of a  generating set of $H$, which we call \emph{basis} of $H$. 

Let $C=\{c_1, \dots, c_m\}\subset G$ be a basis of a free abelian subgroup  satisfying $G'\leq \langle C\rangle \leq Z(G)$, for $G$ a $\tau_2$-group, and let $\{a_i \langle C \rangle \mid i=1, \dots, n\}$ be a basis of  the abelian group $G/\langle C\rangle$. 
For each $g\in G$ there exist   $\alpha_i(g), \gamma_j(g) \in \mathbb{Z}$ such that
\begin{equation}\label{Malcevrep}
g= a_1^{\alpha_1(g)} \dots a_{n}^{\alpha_{n}(g)} c_1^{\gamma_1(g)} \dots c_{m}^{\gamma_{m}(g)}.
\end{equation}
The expression \eqref{Malcevrep} is called a \emph{Malcev representation} of  $g$ with respect to $(A;C)$. The \emph{Malcev coordinates} of $g$ are given by the tuple $$(\alpha_1(g), \dots, \alpha_{\eta}(g), \gamma_1(g), \dots, \gamma_{\mu}(g)).$$ While the $\gamma_i(g)$'s are  unique,  the $\alpha_i(g)$'a are, in general, unique only up to multiples of the order of $a_i\langle C \rangle$. The following  terminology will be used extensively through the paper:

\begin{dfn}\label{malcevBDfn}
Following the notation above, $(A;C)$ is called a \emph{Malcev basis} of $G$ if $G/\C$ is free abelian, i.e.\ if no $a_i\C$ has finite order, and  thus the Malcev coordinates of all elements from $G$ are unique.
\end{dfn}
%
%
%
%
%
In this case it is useful to regard the symbols $\alpha_i$ and $\gamma_t$ as maps
$
\alpha_i, \gamma_t: G \to \mathbb{Z},
$
sending each $g\in G$ to its Malcev coordinates $\alpha_i(g)$ and $\gamma_j(g)$, respectively.  We remark that the notion of a Malcev basis can be formulated in a  more general setting for  groups of any nilpotency step, possibly with torsion.

Let $F_m=F_m(A)$ be the free group generated by $A=\{a_1, \dots, a_m\}$, and let 
$
T_{j,m}=\{[a_{i_1}, \dots, a_{i_j}] \ | \ 1\leq i_1, \dots, i_j \leq m\}
$
be the set of all $j$-fold commutators on the $a_i$'s. The \emph{free $s$-step rank-$m$ nilpotent group with basic generating set $A$} 
is
$$
N_{s,m}= N_{s,m}(A) = F_m/ \langle \langle T_{s+1, m} \rangle \rangle = \langle a_1, \dots, a_m \ | \ [a_{i_1}, \dots, a_{i_{s+1}}]=1 \ \hbox{for all} \ i_j \rangle.
$$
Here $\langle \langle T_{j,m} \rangle \rangle$ denotes the normal closure of  $T_{j,m}$ in $F_m$. 

The  following  holds in any group: 
$$[xy,z]=y^{-1}[x,z]y[y,z], \quad [x,y]=[y,x]^{-1}, \quad \quad \hbox{for all} \ x,y,z.$$
Using this and the fact that commutators belong to the center of $G$ (for $G$ a $\tau_2$-group), we obtain that, for a fixed $g\in G$, the maps  $y \mapsto [g,y]$ and $x\mapsto [x,g]$ are homomorphisms from $G$ into $[G,G]$. We will implicitly  use this fact from now on.


\subsection{E-definability}

In what follows we   use non-cursive boldface letters such as $\mathbf{a}$ to denote tuples of elements: e.g.\ $\mathbf{a}=(a_1, \dots, a_n)$.
\begin{dfn} 
%
Let $\mathcal{M}=\left(M; f_i, r_j, c_k \mid i,j,k \right)$ be an algebraic structure (for the purposes of this paper, $\mathcal{M}$ is a group or a ring), where $M$ is the universe set of $\mathcal{M}$, and the  $f_i, r_j, c_k$ are the function, relation, and constant symbols of $\mathcal{M}$. A set $A \subset M^m$ is called \emph{definable by equations 
} in $\mathcal{M}$, or \emph{e-definable}, if there exists a finite system of equations over $\mathcal{M}$,
$
\Sigma_A(x_1,\ldots,x_m,y_1, \dots, y_n)$, on variables $\mathbf{x}=(x_1, \dots, x_m) \in M^m$ and $\mathbf{y}=(y_1, \dots, y_n) \in M^n$,
such that, for any tuple $\mathbf{a}\in M^m,$ we have that  $\mathbf{a} \in A$ if and only if $\Sigma_A(\mathbf{a},\mathbf{y})$ has a solution $\mathbf{y}\in M^n$.
\end{dfn}


A function 
$
f:X \subset M^k \to M^l
$ 
is called \emph{e-definable} in $\mathcal{M}$ if its graph 
$
\left\{\left(\mb{a},f(\mb{a})\right)\mid \mb{a}\in X\right\} \subset M^{k+l}
$
is e-definable in $\mathcal{M}$. Similarly, a relation $r:X \to \{0,1\}$ is \emph{e-definable} in $\mathcal{M}$ if its graph
$
\left\{\mb{a} \mid r(\mb{a})=1\right\}
$
is e-definable in $\mathcal{M}$.
\begin{dfn}\label{edfnDfn}
An algebraic structure $\mathcal{A}= \left(A; f, \dots, r, \dots, c, \dots \right)$  is called \emph{e-definable} in another  structure $\mathcal{M}$ if there exists an embedding map 
$
\phi: A \hookrightarrow M^k
$ 
for some $k$, called \emph{defining map}, such that the following holds: 
\begin{enumerate}
\item The image of $\phi$ is e-definable in $\mathcal{M}$. 
\item For every function $f=f(x_1, \dots, x_n)$ of $\mathcal{A}$,
the induced function $\phi(f)$ given by
$
\phi(f)\left(\phi(x_1), \dots, \phi(x_{n})\right)= \phi\left(f(x_1, \dots, x_n)\right)
$
is e-definable in $\mathcal{M}$.
%
%
\item Similarly, for every relation $r$ of $\mathcal{A}$, the induced relation 
$
\phi(r)
$
(with a meaning analogous to that of $\phi(f)$) is e-definable in $\mathcal{M}$. 
\end{enumerate}
It follows that the structure $\phi(\mathcal{A})=\left(\phi(A); \phi(f), \dots, \phi(r), \dots, \phi(c), \dots \right)$ is isomorphic to $\mathcal{A}$. Often, after describing a defining map $\phi$, we identify $\phi(\mc{A})$ with $\mc{A}$. 
%
\end{dfn}
\begin{example}\label{group_interpretations}
%
%
The center $Z(G)$ of a finitely generated group $G =\langle g_1, \dots, g_n \rangle$ is e-definable in $G$ as a set. Indeed,  $x\in G$ belongs to $Z(G)$ if and only if it commutes with all $g_i$'s, and hence $Z(G)$ (seen as a set) is defined in $G$ by means of the following system of equations on the single variable $x$:
$$
\bigwedge_{i=1}^{n}  \left( [x,g_i]=x^{-1}g_i^{-1}xg_i=1 \right).
$$
If, additionally, we regard $Z(G)=\left(Z(G); \cdot, {}^{-1}, 1 \right)$ as an algebraic structure with  operations and constants induced from $G$, then $Z(G)$ is still e-definable in $G$ with defining map ${\it id}: Z(G) \to G$, $id(g)=g$.

For another example, let $G$ be a group with  finite $[x,y]$-width $n$ (see below in this section). Then any $g\in G'$ can be written as a product of exactly $n$ commutators (adding trivial ones if necessary), and thus $G'$ is e-definable in $G$ by means of the equation $x=\left[x_1,y_2\right]\cdots\left[x_n,y_n\right]$.  

%
%
%
%
%
\end{example}
Given a tuple $\mathbf{a}=(a_1, \dots, a_n)\in A^n$ and a map $\phi: A \to M^k$, we denote by $\phi(\mathbf{a})$  the tuple in $M^{nk}$ consisting in the components of $\phi(a_1)$, followed by the components of $\phi(a_2)$, and so on. The following is a fundamental property of e-definability:
\begin{lemma}
\label{RedLemma}
%
If $\mathcal{A}$ is e-definable in $\mathcal{M}$ (with defining map  $\phi: A \to M^k$), then for every system of equations $S(\mathbf{x})=S(x_1, \dots, x_n)$ over $\mathcal{A}$, there exists a system of equations $S^*(\mathbf{y}, \mathbf{z})=S^*(y_1, \dots, y_{kn}, z_1, \dots, z_m)$ over $\mathcal{M}$, such that $\mathbf{a}$ is a solution to $S$ in $\mathcal{A}$ if and only if $S^*(\phi(\mathbf{a}), \mathbf{z})$ has a solution $\mathbf{z}$ in $\mathcal{M}$. Moreover, all solutions $\mathbf{b}, \mathbf{c}$ to $S^*$ arise in this way, i.e.\  $\mathbf{b}=\phi(\mathbf{a})$ for some solution $\mathbf{a}$ to $S$. 
\end{lemma}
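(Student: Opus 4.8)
The plan is to prove the lemma by a direct \emph{compilation} of $S$ into $S^*$, translating the language of $\mathcal{A}$ symbol by symbol by means of the defining systems furnished by e-definability (Definition \ref{edfnDfn}). Concretely, I would fix once and for all: a system $\Sigma_{\phi(A)}(\mathbf{u}, \mathbf{w})$ such that a block $\mathbf{u} \in M^k$ lies in $\phi(A)$ iff $\Sigma_{\phi(A)}(\mathbf{u}, \mathbf{w})$ is solvable in $\mathbf{w}$; for each function symbol $f$ of $\mathcal{A}$, say of arity $p$, a system $\Sigma_f(\mathbf{u}_1, \dots, \mathbf{u}_p, \mathbf{v}, \mathbf{w})$ defining the graph of the induced function $\phi(f)$; and for each constant $c$, a system $\Sigma_c(\mathbf{v}, \mathbf{w})$ defining the singleton $\{\phi(c)\} \subseteq M^k$. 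Here each $\mathbf{u}_i$ and $\mathbf{v}$ is a block of $k$ variables over $M$, and $\mathbf{w}$ denotes auxiliary variables local to each system.

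The construction proceeds in two steps. First I would \emph{flatten} $S$: by repeatedly introducing a fresh $\mathcal{A}$-variable to name each proper subterm, every equation $t_1(\mathbf{x}) = t_2(\mathbf{x})$ of $S$ is replaced by an equivalent finite set of \emph{atomic} equations, each of the form $u_0 = f(u_1, \dots, u_p)$, $u_0 = c$, or $u_0 = u_1$, where the $u_i$ range over $x_1, \dots, x_n$ together with the naming variables. Second, I would replace each $\mathcal{A}$-variable $u$ by a block $\mathbf{u}$ of $k$ fresh $M$-variables (the intended coordinates of $\phi(u)$), collecting the $kn$ blocks coming from $x_1, \dots, x_n$ into $\mathbf{y}$. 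The system $S^*$ is then the conjunction of: a copy of $\Sigma_{\phi(A)}$ on each of the $n$ blocks of $\mathbf{y}$, forcing them into $\phi(A)$; a copy of $\Sigma_f$ on the relevant blocks for each atomic equation $u_0 = f(u_1, \dots, u_p)$; a copy of $\Sigma_c$ for each $u_0 = c$; and the coordinatewise equalities for each $u_0 = u_1$. All remaining variables (the naming blocks and the local auxiliaries $\mathbf{w}$) are gathered into $\mathbf{z}$, and the resulting $S^*$ is finite since $S$ and each defining system are.

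For correctness, the forward direction is straightforward: given a solution $\mathbf{a} = (a_1, \dots, a_n)$ of $S$, assign each block of $\mathbf{y}$ the value $\phi(a_i)$ and each naming block the value $\phi$ of the subterm it names. Since all these values lie in $A$ and $\phi$ intertwines the operations of $\mathcal{A}$ with the induced operations $\phi(f), \phi(c)$ by definition, every tuple fed into $\Sigma_{\phi(A)}, \Sigma_f, \Sigma_c$ lands in the corresponding e-definable set, so the required witnesses exist and $(\phi(\mathbf{a}), \mathbf{z})$ solves $S^*$. For the converse together with the \emph{moreover} clause, let $(\mathbf{b}, \mathbf{c})$ solve $S^*$; the $\Sigma_{\phi(A)}$-constraints force each of the $n$ blocks of $\mathbf{b}$ into $\phi(A)$, and injectivity of $\phi$ yields a unique $a_i \in A$ with $\phi(a_i)$ equal to that block, so $\mathbf{b} = \phi(\mathbf{a})$ for $\mathbf{a} = (a_1, \dots, a_n)$. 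One then reads the atomic equations back to conclude that $\mathbf{a}$ solves $S$.

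The flattening and the index bookkeeping are routine; the step I expect to require the most care is the converse, where one must check that a solution of $S^*$ does not merely satisfy the graph relations abstractly but actually \emph{determines} each naming block to be $\phi$ of the correct subterm value at $\mathbf{a}$. This rests on two points: that each $\phi(f)$ is genuinely single-valued on $\phi(A)$ — inherited from $f$ being a function on $A$ via the bijection $\phi: A \to \phi(A)$ — so the graph constraint pins the output block down, and that $\phi$ is injective, so that the coordinatewise equality atoms reflect true equalities in $\mathcal{A}$ rather than merely of their encodings. Relational atoms, should the notion of an $\mathcal{A}$-system include them, are translated in exactly the same manner using the defining systems of the induced relations.
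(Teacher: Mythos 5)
Your construction is correct and is, in substance, the same argument the paper relies on: the paper's proof consists of a single sentence deferring to Theorem 5.3.2 of Hodges (the reduction theorem for interpretations, of which e-definability is the equational/existential-positive special case), and your flattening of terms followed by blockwise translation via the defining systems $\Sigma_{\phi(A)}$, $\Sigma_f$, $\Sigma_c$ is precisely that standard proof written out, including the correct use of injectivity of $\phi$ and single-valuedness of the graph constraints for the converse and the ``moreover'' clause. No gaps.
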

\begin{proof}
It suffices to follow step by step the proof of Theorem 5.3.2 from \cite{Hodges}, which states that the above holds when $\mc{A}$ is interpretable by first order formulas in $\mc{M}$. 
\end{proof}
Of course, $S$ has a solution in $\mathcal{A}$ if and only if $S^*$ has a solution in $\mathcal{M}$. Recall that $\mc{D}(\mathcal{A})$ denotes the Diophantine problem over $\mathcal{A}$ (see Definition \ref{DiophDfn}). One immediately obtains:
\begin{cor}\label{RedCor}
If $\mathcal{A}$ is e-definable in $\mathcal{M}$, then $\mc{D}(\mc{A})$ is reducible to $\mc{D}(\mc{M})$. Consequently, if $\mc{D}(\mc{A})$ is undecidable, then so is $\mc{D}(\mc{M})$. 
\end{cor}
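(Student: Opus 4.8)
The plan is to read the corollary off Lemma \ref{RedLemma} directly, the only additional work being to observe that the translation $S \mapsto S^*$ is \emph{effective}, so that it genuinely qualifies as a reduction in the sense of Definition \ref{DiophDfn}. I would fix a defining map $\phi : A \hookrightarrow M^k$ witnessing that $\mc{A}$ is e-definable in $\mc{M}$, and recall that this data amounts to finitely many fixed finite systems over $\mc{M}$: one system $\Sigma_{\mathrm{im}}$ cutting out the image $\phi(A)$, one for the graph of each induced function $\phi(f)$, one for each induced relation $\phi(r)$, and one naming each induced constant $\phi(c)$.

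First I would take an arbitrary $\mc{A}$-system $S(\mathbf{x})$ as input and apply Lemma \ref{RedLemma} to produce the companion $\mc{M}$-system $S^*(\mathbf{y},\mathbf{z})$. By the lemma (and the remark immediately following it), $S$ has a solution in $\mc{A}$ if and only if $S^*$ has a solution in $\mc{M}$; this is exactly the equisolvability demanded of a reduction by the convention stated just after Definition \ref{DiophDfn}.

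Next I would verify that $S \mapsto S^*$ is computable. The construction underlying Lemma \ref{RedLemma} (following Theorem 5.3.2 of \cite{Hodges}) is purely mechanical: each variable $x_i$ of $S$ is replaced by a block of $k$ fresh $\mc{M}$-variables, and each atomic sub-expression of $S$ is rewritten by substituting the appropriate fixed defining systems $\Sigma_{\mathrm{im}}, \Sigma_{\phi(f)}, \Sigma_{\phi(r)}, \ldots$, introducing auxiliary variables $\mathbf{z}$ as needed. Since there are only finitely many such defining systems and $S$ is itself finite, this rewriting terminates and outputs $S^*$ algorithmically. Consequently, a subroutine deciding $\mc{D}(\mc{M})$ solves $\mc{D}(\mc{A})$: given $S$, one computes $S^*$, calls the subroutine on $S^*$, and returns its answer, with correctness guaranteed by the equisolvability above. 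This establishes the reduction $\mc{D}(\mc{A}) \leq \mc{D}(\mc{M})$.

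Finally, the stated consequence is the standard contrapositive: if $\mc{D}(\mc{M})$ were decidable, the reduction just built would decide $\mc{D}(\mc{A})$, contradicting the hypothesis that $\mc{D}(\mc{A})$ is undecidable, whence $\mc{D}(\mc{M})$ is undecidable. The one point deserving care — and really the only obstacle — is the effectiveness claim, since Lemma \ref{RedLemma} as phrased asserts only the \emph{existence} of $S^*$; I would therefore stress that its construction is uniform and explicit in $S$, which is precisely what upgrades mere equisolvability to an algorithmic reduction.
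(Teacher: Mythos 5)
Your proposal is correct and follows the paper's intended argument exactly: the paper derives this corollary immediately from Lemma \ref{RedLemma} with no further proof, relying on the equisolvability of $S$ and $S^*$. Your added emphasis on the effectiveness of the translation $S \mapsto S^*$ is a sensible clarification of a point the paper leaves implicit, but it is not a different approach.
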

For example, due to the negative answer to Hilbert's 10th problem,  $\mc{D}(\mathcal{M})$ is undecidable for any $\mathcal{M}$ in which the ring $\mathbb{Z}$ is e-definable.

\subsection{Ring of scalars of a $\tau_2$-group}\label{ringSc}
Through this paper, by \emph{ring} we mean an associative ring with identity. Below we say that a map $f: M\times M \to N$ between abelian  groups $M$ and $N$ is \emph{bilinear} if, for all $a\in A$, the maps $f(a, \cdot)$ and $f(\cdot, a)$ from $M$ to $N$ are group homomorphisms. 

\begin{dfn}\label{ringScalars}
Let  $f:M \times M \to N$ be a bilinear map between abelian groups. A commutative ring $A$ is called a \emph{ring of scalars}  of $f$  if there exist faithful actions of $A$ on $M$ and $N$ (by endomorphisms), such that $f(r x,y)=f(x,r y)=r f(x,y)$ for all $r\in A,$  $x,y\in M$.
\end{dfn}
The set of endomorphisms of an abelian group $M$ forms a ring once we equip it with the operations of addition and composition (henceforth called multiplication). We denote such ring by $\End(M)$. As done already in the previous definition, we  simply write $\alpha x$ instead of $\alpha(x)$, for $\alpha\in \End(M)$, $x\in M$.

Since the actions of a ring of scalars $A$ on $M$ and $N$ are faithful, there are natural embeddings $A\hookrightarrow \End(M)$ and $A\hookrightarrow \End(N)$. From now on we identify any such $A$ with the corresponding image under the natural embedding into $\End(M)$. We say that $A$ is \emph{maximal} if for any other ring of scalars $B$, we have $B\leq A$. Of course, if it exists, such maximal ring is unique.

Let now $G$ be a non-abelian $\tau_2$-group. Then, as explained previously,  $G/Z(G)$ and $G'$ are non-trivial free abelian groups. Also, the map
\begin{align}
f: G/Z(G) \times G/Z(G) &\rightarrow G' \label{tau2map}\\ 
\left(gZ(G), hZ(G)\right) &\mapsto [g,h] \nonumber
\end{align}
is  well-defined, non-degenerate, bilinear, and its image generates the whole $G'$.  The maximal  ring of scalars of $f$ is also called the maximal \emph{ring of scalars of $G$}. 

%
%
%


\section{Defining $\mathbb{Z}$ in $\tau_2$-groups}\label{sectionIntroduction}

\subsection{Small centralizers and maximal ring of  scalars}
In this section we prove that $\mathbb{Z}$ is e-definable in any $\tau_2$-group $G$ that has two non-commuting c-small elements. Additionally, we show that if  certain  elements of $G$ are c-small, and they pairwise do not commute, then  the maximal ring of scalars of $G$ is $\mathbb{Z}$. We end the section by proving that, in this case, $G$ cannot be decomposed into a direct product of  non-abelian factors. 
%

\begin{dfn}\label{generalPosDfn}
An element $g$ of a group $G$ is  \emph{centralizer-small} (or just \emph{c-small}) if  
$C(g)=\{ g^t z \ | \ t\in \mbb{Z}, \ z \in Z(G)\},$ where $C(g)$, the centralizer of $g$, denotes the set of elements in $G$ that commute with $g$. 
\end{dfn}

%
%
%
%
%
%

As mentioned in the introduction, the  techniques used to prove the following result have a resemblance with some arguments from Duchin, Liang, and Shapiro in \cite{Duchin}, and from Romankov in \cite{Romankov}.


\begin{thm}\label{mainThmEq2}
Let $G$ be a $\tau_2$-group with Malcev basis $(A;C)$. Suppose $G$ has   two   non-commuting c-small elements $a,b$. Then $\mbb{Z}$  is e-definable in $G$. In particular, $\mc{D}(G)$ is undecidable. 
\end{thm}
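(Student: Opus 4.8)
The plan is to use the element $c=[a,b]$ to build a copy of the ring $\mathbb{Z}$ inside $G$, encoding addition by the group operation and multiplication by the bilinearity of the commutator, i.e.\ by the identity $[a^s,b^t]=[a,b]^{st}$. Since $G$ is torsion-free, $2$-step nilpotent, and $a,b$ do not commute, $c$ is a nontrivial \emph{central} element of infinite order, so the map $\phi\colon \mathbb{Z}\to G$, $\phi(t)=c^t$, is an injective homomorphism of additive groups onto $P:=\langle c\rangle$. I would take $\phi$ as the defining map and show that $P$, equipped with the two operations induced from $(\mathbb{Z};+,\cdot)$ and the constants $0,1$, is e-definable in $G$.

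The c-smallness hypotheses enter through the centralizers. Because $a$ is c-small, the single equation $[x,a]=1$ defines $C(a)=\{a^s z\mid s\in\mathbb{Z},\ z\in Z(G)\}$; and since commutators are central and $z\in Z(G)$, for $x=a^s z$ one computes $[x,b]=[a^s,b][z,b]=c^s$. Symmetrically $[y,b]=1$ defines $C(b)$, and $[a,y]=c^t$ for $y=b^t z'$. Hence the image is e-definable by
\[
 w\in P \iff \exists x\ \big([x,a]=1\ \wedge\ w=[x,b]\big),
\]
which holds precisely for $w=c^s$, $s\in\mathbb{Z}$. It is essential here that $a$ be c-small: otherwise $C(a)$ could be larger and $\{[x,b]\mid[x,a]=1\}$ need not equal $\langle c\rangle$, so the image would fail to be an isomorphic copy of $\mathbb{Z}$.

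Addition is induced by group multiplication, since $c^s c^t=c^{s+t}$, so its graph is $\{(u,v,w)\in P^3\mid w=uv\}$, which is e-definable. For multiplication I would define the graph of $(c^s,c^t)\mapsto c^{st}$ by
\[
 \exists x,y\ \big([x,a]=1\ \wedge\ [y,b]=1\ \wedge\ [x,b]=u\ \wedge\ [a,y]=v\ \wedge\ [x,y]=w\big).
\]
The only real point to check — and the crux of the argument — is that this system forces $w=c^{st}$ and is therefore single-valued. From $[x,a]=1$ and c-smallness, $x=a^{s'}z$; then $[x,b]=c^{s'}=u=c^s$, and the infinite order of $c$ gives $s'=s$. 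Likewise $y=b^{t}z'$ with $[a,y]=v=c^t$. Finally $[x,y]=[a^s z,\,b^t z']=[a^s,b^t]=c^{st}$, independently of the central factors $z,z'$. This is exactly where both c-smallness and torsion-freeness are used. The constants are handled by the e-definable points $0\mapsto 1$ and $1\mapsto c=[a,b]$.

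Putting these together, $\phi$ witnesses that $\mathbb{Z}=(\mathbb{Z};+,\cdot,0,1)$ is e-definable in $G$, and by Corollary \ref{RedCor}, together with the negative solution of Hilbert's tenth problem, $\mc{D}(G)$ is undecidable. I expect the main obstacle to be conceptual rather than computational: namely, recognizing the commutator-as-multiplication encoding and verifying that c-small centralizers pin down the exponents $s,t$ from $u=c^s,\,v=c^t$ so that the multiplication relation is well-defined; once this is seen, the remaining steps are routine translations into finite systems of equations.
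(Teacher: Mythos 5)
Your proposal is correct and follows essentially the same route as the paper: you e-define the set $\{c^t\}$ via a centralizer condition, encode addition by the group operation, and encode multiplication by the system $[x,a]=1,\ [y,b]=1,\ [x,b]=u,\ [a,y]=v,\ [x,y]=w$, which is exactly the paper's system \eqref{odot} up to relabeling, with the same use of c-smallness and torsion-freeness to pin down the exponents. The only (immaterial) difference is that the paper also records the unary negation $\ominus$ explicitly, which is in any case definable from addition.
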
 
\begin{proof}
Consider the set $Z=[a, C_{G}(b)]=\{[a, x] \ | \ x \in C_{G}(b)\}$, and denote $c=[a,b] \neq 1$. Using that $b$ is c-small, $Z=\{c^t \ | \ t \in \mbb{Z}\}$. 
 Since $G$ is torsion free, the map sending $c^t \in Z$ to  $t\in Z$ is a bijection. 
Torsion-freeness also allows us to well-define   binary and unary operations  $\oplus$, $\ominus$, $\odot$ in $Z$ by letting $$c^{t_1} \oplus c^{t_2} = c^{t_1+t_2}, \quad \ominus(c^t)= c^{-t}, \quad \hbox{and} \quad c^{t_1} \odot c^{t_2} =c^{t_1t_2}.$$ We are going to prove that the ring $(Z, \oplus, \odot, c^0, c^1)$, which is isomorphic to $\mathbb{Z}$, is e-definable in $G$. 

An element $g\in G$ belongs to $Z$ if and only if the following identities hold for some $y\in G$: $g=[a, y], \ [y,b]=1$. In other words, $g\in Z$ if and only if $g$ is part of a solution to the system of  equations $\left( x=[a, y] \right) \wedge \left([y,b]=1\right)$ on variables $x, y$. Hence, $Z$ is e-definable in $G$ as a set. Now let $g_1, g_2, g_3 \in Z$. Clearly, $g_1 \oplus g_2 = g_3$ if and only if $g_1g_2=g_3$. It follows that the graph of $\oplus$ is e-definable in $G$: it suffices to take the system formed by the equation $xy=z$ together with equations that ensure  $x,y,z\in Z$. 
Analogously, and taking the equation $xy=1$ instead of $xy=z$, one sees that  $\ominus$ is e-definable in $G$.

Regarding $\odot$, consider the following system over $G$ on variables $x_i$, $i=1,2,3$, and $x_i'$, $i=1,2$.
\begin{equation}\label{odot}
\begin{cases}
x_1=[x_1', b],  
&[x_1', a]=1,\\
x_2=[a, x_2'], 
&[x_2',b]=1,\\
x_3=[x_1', x_2'].
\end{cases}
\end{equation}
Suppose $x_1, x_2, x_3, x_1', x_2'$ is a solution to \eqref{odot}. Since $a$ and $b$ are c-small, $x_1' = a^{t_1} z_1$ and $x_2' = b^{t_2} z_2$ for some $t_i \in \mbb{Z}$ and some $z_i \in Z(G)$, $i=1,2$. Moreover, $x_1= c^{t_1}$ and $x_2=c^{t_2}$. We also have $[x_1', x_2'] = c^{t_1 t_2} =x_3,$ and hence $x_3= c^{t_1t_2}= x_1 \odot x_2$. Conversely, let $x_1, x_2, x_3$ be three elements from $G$ such that $x_1 \odot x_2 = x_3$. Then it is easy to verify that there exist $x_1', x_2'$ such that $x_1, x_2, x_3, x_1', x_2'$ form a solution to \eqref{odot}. We conclude that $x_1 \odot x_2 = x_3$ if and only if $x_1, x_2, x_3$ are part of a solution to \eqref{odot}. Similarly as before, $\odot$ is e-definable in $G$.

This completes the proof, since the ring $(Z; \oplus, \ominus, \odot, c^0, c^1)$ is  e-definable in $G$ and it isomorphic to the ring of integers $(\mathbb{Z}; +, -, \cdot, 0, 1)$. 
%
\end{proof}

%
%
%
%
%
%
%
%
%
%
%
%
%
%
%
%
%
%
%

In \cite{Part_3} we explain how to extend Theorem \ref{mainThmEq2} to the class of finitely generated nilpotent groups $G$ (of any nilpotency step, possibly with torsion). This allows one to prove, for example, that $\mc{D}(N)$ is undecidable over any non-abelian free nilpotent group, recovering one of the results from \cite{Duchin}.

\begin{thm}\label{CSmallMaxZ}
%
Let $(A;C) = (a_1, \dots, a_{n}; c_1, \dots, c_m)$ be a Malcev basis of a $\tau_2$-group $G$ ($n\geq 2$). Assume  that  $[a_i, a_j]\neq 1$ for all $i\neq j$, and  that $a_i$ is c-small for all $i=1, \dots, n$. Then the maximal ring of scalars  of $G$ is isomorphic to the ring of integers $\mathbb{Z}$.
%
\end{thm}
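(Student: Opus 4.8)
The plan is to work with the alternating bilinear commutator map $f\colon M\times M\to N$ of \eqref{tau2map}, where $M=G/Z(G)$ and $N=G'$ are viewed as (additively written) free abelian groups; recall both are nontrivial. Let $A$ be the maximal ring of scalars, acting faithfully on $M$ and on $N$ so that $f(r\bar x,\bar y)=f(\bar x,r\bar y)=r\,f(\bar x,\bar y)$. Since the standard $\mathbb Z$-action is always a ring of scalars (bilinearity gives $f(k\bar x,\bar y)=k\,f(\bar x,\bar y)$, and the action is faithful because $M,N$ are torsion-free and nonzero), we have $\mathbb Z\leq A$, and it suffices to build an injective unital ring homomorphism $A\to\mathbb Z$ that restricts to the identity on $\mathbb Z$. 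The key reformulation of the hypotheses is this: writing $\bar a_i=a_iZ(G)$, the c-smallness of $a_i$ says exactly that $f(\bar a_i,\bar x)=0$ precisely when $\bar x\in\langle\bar a_i\rangle$; equivalently, the $f$-orthogonal complement of $\bar a_i$ is the infinite cyclic group $\langle\bar a_i\rangle$ (infinite since $a_i\notin Z(G)$, because $[a_i,a_j]\neq1$ for some $j$).

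First I would show each $r\in A$ scales every $\bar a_i$ by an integer. As $f$ is alternating, $f(r\bar a_i,\bar a_i)=r\,f(\bar a_i,\bar a_i)=0$, so $r\bar a_i$ lies in the orthogonal complement of $\bar a_i$; by the reformulation this forces $r\bar a_i=k_i\bar a_i$ for a unique integer $k_i=k_i(r)$. Next I would show these integers coincide across $i$. For $i\neq j$, evaluating $f(r\bar a_i,\bar a_j)$ in two ways gives $r\,f(\bar a_i,\bar a_j)=k_i\,f(\bar a_i,\bar a_j)$, while $f(\bar a_i,r\bar a_j)$ gives $r\,f(\bar a_i,\bar a_j)=k_j\,f(\bar a_i,\bar a_j)$; subtracting, $(k_i-k_j)\,f(\bar a_i,\bar a_j)=0$ in $N$. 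Since $f(\bar a_i,\bar a_j)=[a_i,a_j]\neq0$ and $N=G'$ is torsion-free, I conclude $k_i=k_j$, so a single integer $k(r)$ satisfies $r\bar a_i=k(r)\bar a_i$ for all $i$.

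To conclude, the $\bar a_i$ generate $M$ (they generate $G/\langle C\rangle$, of which $M$ is a quotient because $\langle C\rangle\leq Z(G)$), so $r=k(r)\,\mathrm{id}_M$; by bilinearity the commutators $[a_i,a_j]=f(\bar a_i,\bar a_j)$ generate $N$, so $r=k(r)\,\mathrm{id}_N$ as well. The map $r\mapsto k(r)$ is therefore a unital ring homomorphism $A\to\mathbb Z$, injective by faithfulness of the action on $M$ and equal to the identity on $\mathbb Z\leq A$; hence $A=\mathbb Z$, as claimed. The one genuinely delicate step—the one I would be most careful about—is the reformulation of c-smallness as ``orthogonal complement equals $\langle\bar a_i\rangle$''. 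This is what pins $r\bar a_i$ to an \emph{integer} multiple of $\bar a_i$ rather than merely to the rational line through it, and the subsequent cancellation crucially uses both $[a_i,a_j]\neq1$ and the torsion-freeness of $G'$.
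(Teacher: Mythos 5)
Your proof is correct and follows essentially the same route as the paper's: the paper likewise uses c-smallness to force $r\left(a_iZ(G)\right)=a_i^{t_{r,i}}Z(G)$ with $t_{r,i}\in\mathbb{Z}$, then uses $[a_i,a_j]\neq 1$ together with torsion-freeness of $G'$ to equate these integers across $i$ and obtain a ring isomorphism onto $\mathbb{Z}$. The differences are purely presentational: you phrase the argument additively via the bilinear map and conclude by maximality over $\mathbb{Z}\leq A$, whereas the paper works multiplicatively with representatives $b_{r,i}$ and shows directly that every ring of scalars is isomorphic to $\mathbb{Z}$.
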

\begin{proof}
Let $R$ be a ring of scalars of $G$, and fix an element $r\in R$. For each $i=1, \dots, n$, choose a representative $b_{r,i}$  of $r\left(a_iZ(G)\right)$, so that $b_{r,i}Z(G)=r\left(a_iZ(G)\right)$. Then, in $G$:
$
[a_i, b_{r,i}]= [a_iZ(G), b_{r,i}Z(G)] = [a_i Z(G), r\left(a_iZ(G)\right)]= r[a_iZ(G), a_iZ(G)]=r[a_i, a_i]=1,
$
and hence $a_i$ and $b_{r,i}$ commute. Since $a_i$ is c-small, $$b_{r,i}=a_i^{t_{r, i}} z_i$$ for some $t_{r,i}\in \mathbb{Z}$ and $z_i \in  Z(G)$. Thus
$$
r\left(a_iZ(G)\right) = a_i^{t_{r,i}}Z(G).
$$
Now, for any $i,j$:
\begin{align}
[a_i, a_j]^{t_{r,i}} & = [a_i^{t_{r,i}}, a_j] =  [r\left(a_{i}Z(G)\right), a_jZ(G)]=\nonumber\\ & = [a_{i}Z(G), r\left(a_{j}Z(G)\right)] = [a_i, a_j]^{t_{r,j}}.\nonumber
\end{align}
Since $G$ is torsion-free and $[a_i, a_j]\neq 1$,  we obtain $t_{r,i}= t_{r,j}$ for all $i\neq j$. It follows that, for all $r\in R$, there exists $t_r \in \mathbb{Z}$ such that $r(a_iZ(G))= a_i^{t_r}Z(G)$ for all $i$. By definition, $\C\leq Z(G)$, and $G/\C$ has basis $A/\C$. Hence, $G/Z(G) $ is generated by  the $a_i Z(G)$'s. It follows that $r(gZ(G))= g^{t_r}Z(G)$ for all $g\in G$.

Since $G/Z(G)$ is a free abelian group, the map $\phi: R \to \mathbb{Z}$ given by $\phi(r)=t_r$ is a ring homomorphism. 
Moreover, $\phi$ is exhaustive, because given $k\in \mathbb{Z}$, we have $$\phi\left(\sum_{i=1}^k id \right) = \sum_{i=1}^ k \phi(id) = k,$$ where $id$ denotes the identity endomorphism of $G/Z(G)$, i.e.\ the identity element of $R$. Finally, notice that if $\phi(t)=t_r=0$ for some $t$, then $r(gZ(G))= Z(G)$ for all $g\in G$. Hence,  $r$ is the $0$ element of $R$. We conclude that $\phi$ is a ring isomorphism. This proves that any ring of scalars of $G$ (with an identity) is isomorphic to $\mathbb{Z}$. In particular, this is true of the maximal ring of scalars of $G$.
\end{proof}
We remark that Theorem \ref{CSmallMaxZ}  is still true as long as the complement of the commutativity graph between the $a_i$'s is connected.
%
\begin{prop}\label{dirIndec}
Suppose $\mathbb{Z}$ is the maximal ring of scalars  of a $\tau_2$-group $G$. Then $G$ cannot be decomposed into a direct product of  non-abelian subgroups. 
\end{prop}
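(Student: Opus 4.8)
The plan is to argue by contraposition: assuming $G$ decomposes as a direct product of non-abelian subgroups, I will exhibit a ring of scalars strictly larger than $\mathbb{Z}$, contradicting maximality. First I would reduce to the case of exactly two factors. If $G = G_1 \x \dots \x G_k$ with every $G_i$ non-abelian and $k\geq 2$, then regrouping as $G = G_1 \x (G_2 \x \dots \x G_k)$ yields a decomposition into two non-abelian subgroups, each of which is again a $\tau_2$-group (direct factors of a f.g. torsion-free nilpotent group are f.g. torsion-free nilpotent). So it suffices to treat $G = G_1 \x G_2$ with both $G_i$ non-abelian.

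The next step is to record how the structural data of $G$ splits. Since $Z(G) = Z(G_1) \x Z(G_2)$ and $G' = G_1' \x G_2'$, I obtain $M := G/Z(G) = M_1 \oplus M_2$ and $N := G' = N_1 \oplus N_2$, where $M_i = G_i/Z(G_i)$ and $N_i = G_i'$; all four of these are non-trivial precisely because each $G_i$ is non-abelian. Crucially, elements of $G_1$ commute with elements of $G_2$, so the bilinear map $f$ of \eqref{tau2map} vanishes on cross terms, i.e. $f(M_1, M_2) = f(M_2, M_1) = 0$, and hence $f = f_1 \oplus f_2$ with $f_i : M_i \x M_i \to N_i$.

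Then I would introduce two orthogonal projections: let $e_1, e_2$ be the endomorphisms of $M$ (and, under the same names, of $N$) acting as the identity on $M_1$ (resp. $N_1$) and as $0$ on $M_2$ (resp. $N_2$), and symmetrically for $e_2$. A short computation, writing $x = x_1 + x_2$ and $y = y_1 + y_2$ and using that the cross terms of $f$ vanish, shows $f(e_1 x, y) = f(x, e_1 y) = e_1 f(x,y) = f_1(x_1, y_1)$, and likewise for $e_2$; by $\mathbb{Z}$-bilinearity of $f$ this extends to every element $a e_1 + b e_2$. Thus the commutative ring $R = \mathbb{Z} e_1 \oplus \mathbb{Z} e_2$, with identity $e_1 + e_2 = \mathrm{id}$, is a ring of scalars of $G$: its actions on $M$ and $N$ are faithful exactly because $M_1, M_2$ (and $N_1, N_2$) are non-trivial, so $a e_1 + b e_2$ acts as $0$ only when $a = b = 0$.

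Finally I conclude from maximality. By hypothesis the maximal ring of scalars is $\mathbb{Z}$, identified with $\mathbb{Z}\cdot\mathrm{id} \subseteq \End(M)$, so maximality forces $R \leq \mathbb{Z}\cdot\mathrm{id}$. But $e_1 \in R$ is a nontrivial idempotent: $e_1^2 = e_1$, yet $e_1 \neq 0, \mathrm{id}$ since it is the identity on $M_1 \neq 0$ and $0$ on $M_2 \neq 0$, whereas $\mathbb{Z}\cdot\mathrm{id}$ contains only the idempotents $0$ and $\mathrm{id}$. This contradiction proves the proposition. The one point requiring genuine care is the verification that $e_1$ and $e_2$ really satisfy the scalar identity $f(rx,y) = f(x,ry) = r f(x,y)$ on all of $M$; this is where the vanishing of the cross terms of $f$ does the essential work, and I expect it to be the crux of the argument.
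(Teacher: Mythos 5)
Your proof is correct and takes essentially the same route as the paper: the ring $R=\mathbb{Z}e_1\oplus\mathbb{Z}e_2$ is exactly the paper's $\mathbb{Z}^2$ acting by component-wise exponentiation on $G/Z(G)=M_1\oplus M_2$ and $G'=N_1\oplus N_2$, and both arguments conclude by noting this ring of scalars cannot embed in the maximal ring $\mathbb{Z}$. Your idempotent computation simply makes explicit the contradiction the paper leaves implicit ($\mathbb{Z}^2$ has a nontrivial idempotent, $\mathbb{Z}\cdot\mathrm{id}$ does not), which is a nice touch but not a different proof.
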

\begin{proof}
Suppose $G= H \times K$ for some  non-abelian subgroups $H,K$ of $G$. Then $H$ and $K$ are non-abelian $\tau_2$-groups, $Z(G)=Z(H)\times Z(K)$, and $G/Z(G)$ decomposes non-trivially due to the isomorphism $G/Z(G) \cong  H/Z(H) \times K/Z(K)$. Moreover, using that $[(h_1,k_1),(h_2,k_2)]=\left([h_1,h_2], [k_1,k_2]\right)$ for all $h_1,h_2\in H, k_1,k_2\in K$, we obtain  $G'=H' \times K'$.  

Consider the natural actions of $\mathbb{Z}^2$ on $H/Z(H) \times K/Z(K)$ and on $H' \times K'$ defined by component-wise exponentiation (or component-wise multiplication if one is using additive notation): 
$$
(r_1, r_2)(h, k)= (h^{r_1}, k^{r_2})
$$ 
for all $(r_1, r_2)\in \mathbb{Z}^2$ and all $(h, k)$ in $ H/Z(H) \times K/Z(K)$ or in $H' \times K'$.
Since $G$ is a $\tau_2$-group, these are faithful actions by endomorphisms that satisfy $[\mb{r}\mb{u},\mb{v}]=[\mb{u},\mb{r}\mb{v}]=\mb{r}[\mb{u}, \mb{v}]$ for all $\mb{r}\in \mathbb{Z}^2$ and all $\mb{u},\mb{v}\in H/Z(H) \times K/Z(K)$. 
%
%
Thus, $\mathbb{Z}^2$ is a ring of scalars of $G=H \times K$. By  definition, $\mathbb{Z}^2$ is a subring of  the maximal ring of scalars of $G$, which is $\mathbb{Z}$ by hypothesis - a contradiction.  
%
%
%
%
 %
\end{proof}


%
%
%
%
%
%
%
%
%
%
%

\subsection{C-small elements and Malcev bases}\label{CSmallMalcev}
Through this section we let $G$ be a $\tau_2$-group with Malcev basis $(A; C)=(a_1, \dots, a_n; c_1, \dots, c_m)$, for some $n\geq 2$ and $m\geq 1$. By definition, $G' \leq \langle C\rangle \leq Z(G)$. Hence, for each $[a_i, a_j]$ there exist unique integers $\lambda_{t}^{ij}$ such that:
\begin{equation}\label{lambdas}
[a_i, a_j] = \prod_{t=1}^{m} c_t^{\lambda_t^{ij}}.
\end{equation}
For notational convenience, we will sometimes write $\l_{t,i,j}$ instead of $\l_{t}^{ij}$. As explained in Subsection \ref{nilpSub}, there exist maps $\alpha_i : G \to \mathbb{Z}$, $\gamma_j : G \to \mathbb{Z}$, called Malcev coordinates, such that any $x\in G$ can be written uniquely as:
\begin{equation}\label{coord}
x= \prod_{i=1}^n a_i^{\alpha_i(x)} c(x), \quad \hbox{where} \quad c(x) = \prod_{j=1}^m c_j^{\gamma_j(x)}.
\end{equation}
%
%
%
In what follows we use the  Malcev basis $(A;C)$ to construct  homogeneous systems of linear Diophantine equations $S_{\ell}$  such that, if $S_{\ell}$ admits only the trivial solution, then $a_{\ell}$ is c-small, provided another minor condition is met  ($\ell=1, \dots, n$).  We start by expressing the Malcev coordinates of a commutator $[x,y]$ in terms of the Malcev coordinates $\alpha_i(x)$, $\alpha_i(y)$ of $x$ and $y$. 
\begin{lemma}\label{commxycoord}
%
%
%
%
The following identity holds for any $x, y\in G$:
\begin{equation}
[x,y]= \prod_{t=1}^{m} c_{t}^{\sum_{i,j=1}^{n} \lambda_{t}^{ij} \alpha_i(x) \alpha_j(y) }.
\end{equation}
\end{lemma}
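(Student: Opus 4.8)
The plan is to expand each of $x$ and $y$ via its Malcev representation \eqref{coord}, discard the central factors, and then exploit the bilinearity of the commutator map. Writing $x=\prod_{i=1}^n a_i^{\alpha_i(x)}\,c(x)$ and $y=\prod_{j=1}^n a_j^{\alpha_j(y)}\,c(y)$, the factors $c(x),c(y)$ lie in $\C\leq Z(G)$ and are therefore central; since a central factor in either slot of a commutator can be deleted, we have $[x,y]=\big[\prod_{i=1}^n a_i^{\alpha_i(x)},\ \prod_{j=1}^n a_j^{\alpha_j(y)}\big]$.

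First I would recall, as noted in Subsection \ref{nilpSub}, that because $G$ is $2$-step nilpotent every commutator is central, so for each fixed $g$ the maps $u\mapsto[u,g]$ and $u\mapsto[g,u]$ are homomorphisms from $G$ to $G'$. Applying this in the left argument, then in the right, together with $[a_i^s,a_j^t]=[a_i,a_j]^{st}$ (the same property applied to powers), the commutator above factors as
\[
[x,y]=\prod_{i=1}^n\prod_{j=1}^n [a_i,a_j]^{\alpha_i(x)\alpha_j(y)}.
\]

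Next I would substitute the defining relations \eqref{lambdas}, namely $[a_i,a_j]=\prod_{t=1}^m c_t^{\lambda_t^{ij}}$, into this product. Because all of the $c_t$ commute (they lie in the abelian group $\C$), the resulting double product of central elements may be reordered freely and the exponent of each $c_t$ collected into a single sum, yielding precisely $\prod_{t=1}^m c_t^{\sum_{i,j=1}^n \lambda_t^{ij}\alpha_i(x)\alpha_j(y)}$, as claimed.

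The argument is essentially routine, so I do not expect a serious obstacle; the only delicate points are bookkeeping ones. What must be checked is that the bilinear expansion is legitimate, and this is exactly where $2$-step nilpotency is indispensable: it is what turns the commutator maps into homomorphisms and forces every intermediate term into the abelian subgroup $G'\leq\C$, so that deleting central factors and reordering products are all valid. The non-commutativity of $G$ never causes trouble, precisely because the end result is central.
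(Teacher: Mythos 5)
Your argument is correct and is essentially identical to the paper's own proof: both expand $x$ and $y$ in Malcev form, drop the central factors $c(x),c(y)$, use the bilinearity of the commutator map afforded by $2$-step nilpotency to get $\prod_{i,j}[a_i,a_j]^{\alpha_i(x)\alpha_j(y)}$, and then substitute \eqref{lambdas} and collect exponents of the commuting $c_t$'s. No gaps; the justification you give for the bilinear expansion is exactly the point the paper relies on.
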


\begin{proof}
We use that $G'\leq Z(G)$, and that the map $[\cdot, \cdot]$ behaves "bi-linearly", together with equations \eqref{coord} and \eqref{lambdas}.
\begin{align}
&[x , y] = [\prod a_i^{\alpha_i(x)} c(x), \prod a_i^{\alpha_i(y)} c(y)] = [\prod a_i^{\alpha_i(x)}, \prod a_i^{\alpha_i(y)}] =\nonumber \\ 
= &\prod_{i, j=1}^{n}[a_i, a_j]^{\alpha_i(x) \alpha_j(y)}= \prod_{i, j=1}^{n}\prod_{t=1}^{m} c_{t}^{\lambda_{t}^{ij} \alpha_i(x) \alpha_j(y)} =\prod_{t=1}^{m} c_{t}^{\sum_{i,j=1}^{n} \lambda_{t}^{ij} \alpha_i(x) \alpha_j(y)}. \nonumber
\end{align}
\end{proof}
This result allows one to express the equation $[x, y] = w$ in $G$ as a system of Diophantine equations. More precisely, if $x, y \in G$ and $w \in G' \leq \langle C \rangle$ (in which case $\alpha_i(w) =0$ for all $i$), 
then $[x, y]=w$ if and only if
\begin{equation}\label{mainsystem}
\sum_{i,j=1}^{n} \lambda_{t}^{ij} \alpha_i(x) \alpha_j(y) = \gamma_{t}(w) \quad \hbox{for all} \quad t=1, \dots, m.
\end{equation}
%
%
%
%
%
Hence, one can find all triples $x,y,w \in G$ such that $[x, y] =w$ by finding all solutions $\alpha_i(x), \alpha_j(y), \gamma_k(w)$ to the system formed by the equations of \eqref{mainsystem}, with the  $\alpha_i(x), \alpha_j(y), \gamma_k(w)$'s seen as variables, and then taking $$x = \prod a_i^{\alpha_i(x)} c_x, \quad y=\prod a_j^{\alpha_j(y)}c_y, \quad \hbox{and} \quad z=\prod c_t^{\gamma_k(w)},$$ for arbitrary $c_x, c_y \in \langle C \rangle$. One may also take $x$, $y$, or $w$ to be constants. In the next result we apply this strategy to the equation  $[a_{\ell}, x] =1$. 


\begin{cor}\label{centralizergenerators}
For any $a_{\ell}$ and any $x\in G$, we have that $[a_{\ell}, x]=1$ if and only if  
\begin{equation}\label{Asystem} \begin{pmatrix} -\lambda_1^{1, \ell} & \dots & -\lambda_{1}^{\ell-1, \ell} & \lambda_1^{\ell, \ell+1} & \dots & \lambda_1^{\ell, n} \\ \vdots & \ddots & \vdots & \vdots & \ddots & \vdots \\ -\lambda_{m}^{1,\ell} & \dots & -\lambda_{m}^{\ell-1, \ell} & \lambda_m^{\ell, \ell+1} & \dots & \lambda_m^{\ell, n}  \end{pmatrix} \left( \begin{array}{c} \alpha_1(x) \\ \vdots \\ \alpha_{\ell-1}(x) \\ \alpha_{\ell+1}(x) \\ \vdots \\ \alpha_{n}(x) \end{array} \right)
= \left( \begin{array}{c} 0 \\ \vdots \\ 0 \end{array} \right) \end{equation}
%
%
%
\end{cor}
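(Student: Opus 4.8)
The plan is to obtain this as a direct specialization of Lemma \ref{commxycoord}, taking the first commutator argument to be the generator $a_\ell$ and the second to be $x$. First I would record the Malcev coordinates of $a_\ell$: since $a_\ell$ is itself a basis element, $\alpha_i(a_\ell)=\delta_{i\ell}$ (the Kronecker delta) and $\gamma_t(a_\ell)=0$. Substituting these into the identity of Lemma \ref{commxycoord} collapses the double sum over $i$ to the single index $i=\ell$, yielding
$$[a_\ell, x] = \prod_{t=1}^{m} c_t^{\sum_{j=1}^{n} \lambda_t^{\ell j}\,\alpha_j(x)}.$$

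Next I would use that $\langle C \rangle$ is free abelian with basis $c_1,\dots,c_m$, so this product equals $1$ if and only if every exponent vanishes, i.e.\ $\sum_{j=1}^{n} \lambda_t^{\ell j}\,\alpha_j(x)=0$ for each $t=1,\dots,m$. This already produces $m$ homogeneous linear equations in the unknowns $\alpha_j(x)$, and it establishes both directions of the ``if and only if'' at once; it remains only to match these equations to the displayed matrix in \eqref{Asystem}.

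The one bookkeeping step is to rewrite each sum using the antisymmetry of the structure constants. Since $[a_\ell,a_\ell]=1$ we have $\lambda_t^{\ell\ell}=0$, so the $j=\ell$ term drops out and $\alpha_\ell(x)$ never appears (consistent with the column for $\alpha_\ell$ being absent from the matrix). For $j<\ell$ I would use $[a_\ell,a_j]=[a_j,a_\ell]^{-1}$, hence $\lambda_t^{\ell j}=-\lambda_t^{j\ell}$, which produces the negative entries $-\lambda_t^{j,\ell}$ in the first block of columns; for $j>\ell$ the coefficients $\lambda_t^{\ell j}$ are already in the range covered by the presentation and appear unchanged, giving the positive entries of the second block. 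Splitting the sum accordingly,
$$\sum_{j=1}^{n} \lambda_t^{\ell j}\,\alpha_j(x) = -\sum_{j<\ell} \lambda_t^{j\ell}\,\alpha_j(x) + \sum_{j>\ell} \lambda_t^{\ell j}\,\alpha_j(x),$$
and requiring this to vanish for each $t$ is precisely the $t$-th row of \eqref{Asystem}.

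There is no genuine difficulty here: the result is an immediate corollary of Lemma \ref{commxycoord} applied to a commutator with one argument fixed at a generator. The only point requiring care is the index convention for $\lambda_t^{ij}$ — the presentation \eqref{tau2pres} defines these constants only for $i<j$, so the argument must explicitly extend them by $\lambda_t^{ii}=0$ and $\lambda_t^{ji}=-\lambda_t^{ij}$ in order to justify both the collapse of the double sum and the splitting into the two column-blocks of the matrix.
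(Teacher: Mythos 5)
Your argument is correct and follows essentially the same route as the paper: the authors also specialize the identity of Lemma \ref{commxycoord} (in its ``iff'' form \eqref{mainsystem}) to $[a_\ell,x]=1$ using $\alpha_i(a_\ell)=\delta_{i\ell}$, and then invoke the identities $\lambda_t^{ij}=-\lambda_t^{ji}$, $\lambda_t^{\ell\ell}=0$, and $\gamma_t(1)=0$ to arrive at the matrix form \eqref{Asystem}. Your explicit remark about extending the structure constants beyond the range $i<j$ by antisymmetry is exactly the bookkeeping the paper performs implicitly.
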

\begin{proof}
Note that $\alpha_i(a_{\ell})=0$ if $i\neq \ell$ and  $\alpha_{\ell}(a_{\ell})=1$ otherwise.  Applying   \eqref{mainsystem} on $[a_{\ell}, x]=1$, we obtain
\begin{equation}\nonumber
\sum_{j=1}^{n} \lambda_{t}^{\ell j} \alpha_j(y) = \gamma_t(1) \quad \hbox{for all} \quad t=1, \dots, m.
\end{equation}
The result now follows from the identities $\l_{t}^{ij}=-\l_{t}^{ji}$, $\gamma_t(1)=0$, and $\l_{t}^{\ell, \ell}=0$, which hold for all $t$, $i$, $j$. Since $\l_{t,\ell, \ell}=0$ for all $t$, in  \eqref{Asystem} we omit the column  that corresponds to these $\l$'s, and consequently we remove the variable $\alpha_{\ell}(x)$ as well.
\end{proof}
We look at \eqref{Asystem} as a system of linear Diophantine equations on variables $\left\{ \alpha_i(x) \mid i \neq \ell \right\}$, and we denote its matrix by $M_{\ell}$.  Below we give the main result of this section.  It provides sufficient conditions for   $a_{\ell}$ being c-small, and for having $Z(G)=\C$. These conditions are formulated in terms of the \emph{rank} of $M_{\ell}$, which is the maximum number of $\mathbb{Z}$-linearly  independent rows (or columns) of $M_{\ell}$.  Equivalently, it is its maximum number of $\mathbb{R}$-linearly independent rows or columns  (this can be seen by computing the Smith normal form of $M$). A set of vectors $\mathbf{v}_1, \dots, \mathbf{v}_t$ is \emph{$K$-linearly independent} if the equation $k_1 \mathbf{v}_1 + \dots + k_{t}\mathbf{v}_t=0$, $k_i\in K$, has only the solution $k_i=0$ for all $i$. 

\begin{prop}\label{ThetaCor}
%
%
Suppose $\rank(M_{\ell})=n-1$ for some $\ell=1, \dots, n$. Then $[a_{\ell}, x]=1$ if and only if \begin{equation}\label{eq100}x=a_{\ell}^{\alpha_{\ell}(x)} c(x).\end{equation} If, additionally, $\l_{t}^{\ell, k}\neq 0$ for some $t$ and $k\neq \ell$, then $Z(G)=\C$, and $a_{\ell}$ is c-small.
\end{prop}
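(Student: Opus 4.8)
The plan is to read everything off from Corollary \ref{centralizergenerators}, which already encodes the condition $[a_\ell, x]=1$ as the homogeneous linear system \eqref{Asystem}, with matrix $M_\ell$ on the variables $\{\alpha_i(x)\mid i\neq\ell\}$. Since $M_\ell$ has exactly $n-1$ columns, the hypothesis $\rank(M_\ell)=n-1$ says precisely that $M_\ell$ has full column rank. As was remarked when rank was introduced, the $\mathbb{Z}$-rank and the $\mathbb{R}$-rank of $M_\ell$ agree (via the Smith normal form), so full column rank means the columns are $\mathbb{R}$-linearly independent; hence the only real—and a fortiori the only integer—solution of \eqref{Asystem} is the trivial one. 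This forces $\alpha_i(x)=0$ for all $i\neq\ell$, i.e.\ $x=a_\ell^{\alpha_\ell(x)}c(x)$, giving one direction of \eqref{eq100}. The converse I would dispatch by a one-line computation: if $x=a_\ell^{\alpha_\ell(x)}c(x)$, then using bilinearity of the commutator together with $\C\leq Z(G)$ we get $[a_\ell,x]=[a_\ell,a_\ell]^{\alpha_\ell(x)}[a_\ell,c(x)]=1$.

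For the second statement I would first establish $Z(G)=\C$. The inclusion $\C\leq Z(G)$ holds by definition, so only $Z(G)\leq\C$ needs an argument. I would take $z\in Z(G)$; since $z$ commutes with $a_\ell$, the equivalence just proved yields $z=a_\ell^{\alpha_\ell(z)}c(z)$. I then exploit that $z$ also commutes with the index $k$ from the hypothesis: by bilinearity and \eqref{lambdas} (equivalently Lemma \ref{commxycoord}),
\[
1=[z,a_k]=[a_\ell,a_k]^{\alpha_\ell(z)}=\prod_{t=1}^m c_t^{\l_t^{\ell k}\,\alpha_\ell(z)}.
\]
Because $C$ is a basis of the free abelian group $\C$, every exponent must vanish, so $\l_t^{\ell k}\,\alpha_\ell(z)=0$ for all $t$. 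Picking the index $t$ with $\l_t^{\ell k}\neq 0$ guaranteed by hypothesis forces $\alpha_\ell(z)=0$, whence $z=c(z)\in\C$. This gives $Z(G)=\C$.

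Finally, to see that $a_\ell$ is c-small I would combine the two facts already obtained. The first part describes the centralizer as $C(a_\ell)=\{a_\ell^{\alpha_\ell(x)}c(x)\mid \alpha_\ell(x)\in\mathbb{Z},\ c(x)\in\C\}$, and substituting the identity $Z(G)=\C$ rewrites this set as $\{a_\ell^{t}z\mid t\in\mathbb{Z},\ z\in Z(G)\}$, which is exactly Definition \ref{generalPosDfn}. I do not expect a genuine obstacle: the only nontrivial input is the coincidence of $\mathbb{Z}$-rank and $\mathbb{R}$-rank for $M_\ell$, which the paper has already recorded, and the rest is bilinear bookkeeping in the Malcev coordinates. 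The single point that deserves care is to use the hypothesis on $\l_t^{\ell k}$ in exactly the right place, namely to annihilate $\alpha_\ell(z)$ once the remaining coordinates of $z$ have already been killed by the first part.
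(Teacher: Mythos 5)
Your proof is correct and follows essentially the same route as the paper's: reduce $[a_\ell,x]=1$ to the system \eqref{Asystem} via Corollary \ref{centralizergenerators}, use full column rank to force $\alpha_i(x)=0$ for $i\neq\ell$, and then kill $\alpha_\ell(z)$ for $z\in Z(G)$ by commuting with $a_k$ and invoking $\l_t^{\ell k}\neq 0$. The only (harmless) differences are that you spell out the converse direction of \eqref{eq100} and the $\mathbb{Z}$-rank versus $\mathbb{R}$-rank point explicitly, where the paper leaves both implicit.
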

\begin{proof}
If $\rank(M_{\ell})=n-1$, then \eqref{Asystem} admits only the trivial solution. Then, by Corollary \ref{centralizergenerators}, $[a_{\ell},x]=1$ if and only if $\alpha_{i}(x)=0$ for all $i\neq \ell$. It follows that $[a_{\ell},x]=1$ if and only if the Malcev representation  \eqref{coord} of $x$ is as in \eqref{eq100}. Assume now, additionally, that $\l_{t,\ell, k}\neq 0$ for some  $t$ and $k$, and let   $g\in Z(G)$. Then $[a_{\ell},g]=1$, and by what we just proved, $$g=a_{\ell}^{\alpha_{\ell}(g)} c(g).$$ Applying  Corollary \ref{centralizergenerators}  to $a_k$ and $g$, we obtain that $\l_{t',k, \ell}\alpha_{\ell}(g)=0$ for all $t$'. Since  $\l_{t,k,\ell}\neq 0$, we have $\alpha_{\ell}(g)=0$, and thus $g=c(g)\in \C$. We conclude that $Z(G)=\C$. That $a_{\ell}$ is c-small follows now from \eqref{eq100} and $Z(G)=\C$.
\end{proof}


%
%


%




\section{Random  $\tau_2$-groups}\label{randomSec}



\subsection{$\tau_2$-presentations and their span} 
%

%
In this subsection we study groups $G$  that admit the following specific type of polycyclic presentation:
\begin{equation}\label{NPpres}
G= \langle A, C \mid [a_i, a_j]=\prod_{t=1}^m c_t^{\l_t^{ij}},\ [A,C]=[C,C]=1, \ 1\leq i < j \leq n \rangle,
\end{equation}  
for some  sets $A= \{a_1, \dots, a_n\}$ and $C=\{c_1, \dots, c_m\}$, with $n,m\geq 0$.  We call \eqref{NPpres} a \emph{$\tau_2$-presentation}, and we let $\mc{P}(n,m)$ denote the set of all such presentations. More precisely, $P \in \mc{P}(n,m)$ if and only if $P$ has the form \eqref{NPpres} for some exponents $\lambda_t^{ij}$, and some sets $A$, $C$ with $|A|=n$, $|C|=m$. We remark that, through the paper (and as already done in \eqref{NPpres}), we often make no distinction between a presentation and its corresponding group.  

The goal of this subsection is to prove the following:
\begin{thm}\label{SpanThm}
Let $G$ be a group, and let $n,m$  be two  integers greater or equal than $0$. Then the following two statements are equivalent. 1)  $G$ admits a  $\tau_2$-presentation  from $\mc{P}(n,m)$.
2) $G$ is a $\tau_2$-group  with
 $\rank(G/Z(G)) + \rank(Z(G)) = n + m$, and
 $\rank(G') \leq m \leq \rank(Z(G))$.

Consequently, $G$ is a $\tau_2$-group if and only if it admits a $\tau_2$-presentation.
\end{thm}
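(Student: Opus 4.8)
The plan is to prove the two implications $(1)\Rightarrow(2)$ and $(2)\Rightarrow(1)$ separately, and then read off the concluding biconditional. The technical heart of the argument, and the step I expect to be the main obstacle, is showing that a presentation from $\mc{P}(n,m)$ defines a \emph{torsion-free} group of Hirsch length exactly $n+m$ (i.e.\ that the polycyclic presentation is consistent and does not collapse); once this is secured, both the rank computations in $(1)\Rightarrow(2)$ and the construction in $(2)\Rightarrow(1)$ reduce to bookkeeping with Hirsch length and pure subgroups.

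For $(1)\Rightarrow(2)$ I would first build an explicit model. On the set $\mathbb{Z}^n\times\mathbb{Z}^m$ define the multiplication
$$(\mathbf{x},\mathbf{u})\cdot(\mathbf{y},\mathbf{v})=\left(\mathbf{x}+\mathbf{y},\,\mathbf{u}+\mathbf{v}+\beta(\mathbf{x},\mathbf{y})\right),$$
where $\beta:\mathbb{Z}^n\times\mathbb{Z}^n\to\mathbb{Z}^m$ is the bilinear form with $\beta(\mathbf{e}_i,\mathbf{e}_j)=(\lambda_1^{ij},\dots,\lambda_m^{ij})$ for $i<j$ and $0$ otherwise. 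This yields a group $\widetilde{G}$ in which the $c$'s are central and the antisymmetrization of $\beta$ produces exactly the commutators $[\mathbf{e}_i,\mathbf{e}_j]$ prescribed by the presentation; moreover $\widetilde{G}$ is visibly $2$-step nilpotent and torsion-free (a relation $g^k=1$ forces $\mathbf{x}=\mathbf{0}$ from the first coordinate, whence $g$ is central and then $\mathbf{u}=\mathbf{0}$). The universal property of the presentation gives a surjection $G\twoheadrightarrow\widetilde{G}$. A collection argument (each $c_t$ is central and each $[a_i,a_j]\in\langle C\rangle$ is central) shows every element of $G$ has a normal form $\prod_i a_i^{\alpha_i}\prod_t c_t^{\gamma_t}$; since the surjection sends such a form to a point of $\widetilde{G}$ from which $(\alpha_1,\dots,\alpha_n,\gamma_1,\dots,\gamma_m)$ is recoverable, distinct normal forms remain distinct, so $G\cong\widetilde{G}$. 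Thus $G$ is a $\tau_2$-group with $\langle C\rangle\cong\mathbb{Z}^m$ central and $G/\langle C\rangle\cong\mathbb{Z}^n$. The rank statements then follow: $G'\leq\langle C\rangle$ gives $\rank(G')\leq m$; $\langle C\rangle\leq Z(G)$ gives $m\leq\rank(Z(G))$; and additivity of Hirsch length $h$ over $1\to Z(G)\to G\to G/Z(G)\to 1$ gives $\rank(G/Z(G))+\rank(Z(G))=h(G)=n+m$.

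For $(2)\Rightarrow(1)$ the task is to manufacture suitable generating sets. Writing $P=\mathrm{Is}_{Z(G)}(G')$ for the isolator of $G'$ inside the free abelian group $Z(G)$, this $P$ is pure of rank $\rank(G')$, hence a direct summand; since $\rank(G')\leq m\leq\rank(Z(G))$, I would enlarge $P$ to a direct-summand (hence pure) subgroup $H$ of rank exactly $m$ with $G'\leq P\leq H\leq Z(G)$, and let $C=\{c_1,\dots,c_m\}$ be a basis of $H$. Purity of $H$ in $Z(G)$, together with torsion-freeness of $G/Z(G)$, forces $G/H$ to be torsion-free; being abelian (as $G'\leq H$) and finitely generated, it is free abelian, and Hirsch-length additivity pins its rank at $(n+m)-m=n$. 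Lifting a basis of $G/H$ to elements $A=\{a_1,\dots,a_n\}$ of $G$, all the relations $[a_i,a_j]=\prod_t c_t^{\lambda_t^{ij}}$ (with uniquely determined exponents, since $C$ is a basis of $H$) and $[A,C]=[C,C]=1$ hold, so $G$ is a quotient of the presented group $\widehat{G}\in\mc{P}(n,m)$. By part $(1)$, $\widehat{G}$ is torsion-free of Hirsch length $n+m=h(G)$; a surjection between torsion-free nilpotent groups of equal Hirsch length has a kernel of Hirsch length $0$, hence finite, hence trivial, so it is an isomorphism, and $G$ admits a presentation in $\mc{P}(n,m)$.

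Finally, for the concluding biconditional: if $G$ admits a $\tau_2$-presentation then $G$ is a $\tau_2$-group by $(1)$; conversely, if $G$ is a $\tau_2$-group, then taking $m=\rank(Z(G))$ and $n=\rank(G/Z(G))$ meets condition $(2)$ (indeed $\rank(G')\leq\rank(Z(G))=m\leq\rank(Z(G))$ and $n+m=\rank(G/Z(G))+\rank(Z(G))$), so $(2)\Rightarrow(1)$ furnishes a $\tau_2$-presentation. As noted, the genuine difficulty is the consistency and torsion-freeness in $(1)$; the explicit cocycle model is what makes that routine, after which everything reduces to purity of subgroups and additivity of Hirsch length.
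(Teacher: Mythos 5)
Your proof is correct, but it takes a genuinely different route from the paper's at both of its main steps. For $(1)\Rightarrow(2)$, the paper establishes uniqueness of the normal form $\prod_i a_i^{\alpha_i}\prod_t c_t^{\gamma_t}$ by realizing the presented group as a quotient of $N_{2,n}(A)\times\mathbb{Z}^m(C)$ and invoking the known Malcev basis of the free $2$-step nilpotent group, whereas you build the explicit cocycle model on $\mathbb{Z}^n\times\mathbb{Z}^m$; your version is more self-contained (any bilinear $\beta$ is automatically a $2$-cocycle, so associativity, torsion-freeness and the commutator formula are immediate), while the paper's version reuses standard structure theory. You then get $\rank(G/Z(G))+\rank(Z(G))=n+m$ in one line from additivity of Hirsch length, where the paper (Corollary \ref{superCor}) instead proves the sharper structural fact that $Z(G)$ admits a basis of the form $C\cup D$ — information it needs again later (e.g.\ in Section \ref{densitySec}), so your shortcut buys brevity at the cost of a byproduct the paper actually uses. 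For $(2)\Rightarrow(1)$ the divergence is larger: the paper starts from the canonical presentation on bases of $G/Z(G)$ and $Z(G)$ (Lemma \ref{neatFormGLemma}) and performs column operations on the matrix of the $\lambda$'s to zero out $\rank(Z(G))-m$ columns before repartitioning generators, while you choose the subgroup $\langle C\rangle$ directly as a rank-$m$ pure subgroup of $Z(G)$ containing the isolator of $G'$, and then upgrade the surjection $\widehat{G}\twoheadrightarrow G$ to an isomorphism by a Hirsch-length count (note this step quietly uses direction $(1)$ to know $h(\widehat{G})=n+m$ and $\widehat{G}$ is torsion-free, which is legitimate since you prove $(1)$ first). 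A final small advantage of your model: it handles the degenerate cases $n\leq 1$ or $m=0$ uniformly, which the paper treats separately in Lemma \ref{degenerate_cases}.
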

%
%

While proving this we will obtain interesting information regarding the structure of $G$, see Corollary \ref{superCor} and Lemma \ref{neatFormGLemma}. In particular, we will show that if $G$ has presentation \eqref{NPpres}, then $(A;C)$ is a Malcev basis of $G$. 

\begin{lemma}\label{degenerate_cases}
Theorem \ref{SpanThm} holds if $n=0,1$ or $m=0$.
\end{lemma}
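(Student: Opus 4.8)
The plan is to prove the final statement, namely Lemma \ref{degenerate_cases}, by checking that Theorem \ref{SpanThm} holds in each of the three degenerate cases $n=0$, $n=1$, and $m=0$ directly from the definitions, since in these cases the relations in \eqref{NPpres} become vacuous or nearly so. The key observation is that a $\tau_2$-presentation only has nontrivial commutator relations $[a_i,a_j]=\prod_t c_t^{\lambda_t^{ij}}$ when there exist at least two distinct generators $a_i, a_j$ (i.e. $n\geq 2$) and at least one central generator on the right-hand side (i.e. $m\geq 1$). So in each degenerate case the group $G$ is forced to be free abelian, and both sides of the equivalence can be verified by hand.

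\smallskip

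First I would handle $n=0$: here $A=\emptyset$, so the presentation \eqref{NPpres} reduces to $G=\langle c_1,\dots,c_m\mid [c_k,c_r]=1\rangle$, which is the free abelian group $\mathbb{Z}^m$. For such $G$ we have $Z(G)=G$, $G'=1$, so $\rank(G/Z(G))=0$, $\rank(Z(G))=m$, and $\rank(G')=0$. Statement (2) then reads $0+m=0+m$ together with $0\leq m\leq m$, both of which hold; and an abelian group is trivially a $\tau_2$-group (torsion-free since it is $\mathbb{Z}^m$, and $1$-step, hence $2$-step, nilpotent). Conversely any group admitting such a presentation is exactly $\mathbb{Z}^m$. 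Next, for $n=1$ the only generator among the $a_i$ is $a_1$, and since the relations $[a_i,a_j]$ require $i<j$ there are none, so again $G=\langle a_1,c_1,\dots,c_m\mid [a_1,c_j]=[c_k,c_r]=1\rangle\cong\mathbb{Z}^{m+1}$, and the same bookkeeping verifies (2): $\rank(G/Z(G))=0$, $\rank(Z(G))=m+1$, $\rank(G')=0$, giving $0+(m+1)=1+m$ and $0\leq m\leq m+1$. For $m=0$ the right-hand sides $\prod_{t=1}^0 c_t^{\lambda_t^{ij}}$ are empty products, so every relation becomes $[a_i,a_j]=1$, forcing $G=\langle a_1,\dots,a_n\mid [a_i,a_j]=1\rangle\cong\mathbb{Z}^n$, again abelian; here $\rank(G')=0\leq 0=m\leq\rank(Z(G))=n$ and $\rank(G/Z(G))+\rank(Z(G))=0+n=n+0$, so (2) holds.

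\smallskip

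For the converse directions in each case, I would argue that if $G$ is a $\tau_2$-group satisfying the rank conditions of (2) with the relevant parameter degenerate, then $G$ must be abelian and the required presentation can be written down: for instance when $m=0$, the condition $\rank(G')\leq m=0$ forces $G'=1$ (as $G'$ is a subgroup of the torsion-free $Z(G)$ and has rank $0$), so $G$ is abelian of rank $n$, hence $G\cong\mathbb{Z}^n$, which admits a presentation in $\mc{P}(n,0)$. Similarly, when $n\leq 1$ the map $f$ of \eqref{tau2map} has domain $G/Z(G)$ of rank at most $n\leq 1$, so its image $G'$ is trivial (a nondegenerate alternating form on a group of rank $\leq 1$ vanishes), forcing $G$ abelian, and the rank equation $\rank(Z(G))=n+m$ pins down $G\cong\mathbb{Z}^{n+m}$, which admits the desired presentation. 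I do not anticipate a genuine obstacle here — these are all essentially finite verifications — but the one point requiring slight care is confirming in the converse directions that the rank constraints genuinely force $G'=1$ (and hence abelianness), rather than merely being consistent with it; this is where the nondegeneracy of $f$ and torsion-freeness of $G'$ must be invoked so that rank $0$ really means trivial.
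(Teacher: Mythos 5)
Your proof is correct and follows essentially the same route as the paper: the forward direction is a direct verification (which the paper simply calls immediate), and the converse uses exactly the paper's two observations, namely that $m=0$ forces $\rank(G')=0$ hence $G'=1$, and that $n\leq 1$ forces $G/Z(G)$ to be cyclic hence $G$ abelian. The only step you leave implicit is why $\rank(G/Z(G))\leq n$ in the second case; as in the paper, this follows from combining the two conditions in statement 2), since $\rank(G/Z(G)) = n+m-\rank(Z(G)) \leq n+m-m = n$.
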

\begin{proof}
The forward direction is immediate. To prove the converse, assume that $G$ is a $\tau_2$ group satisfying Items 1) and 2) in Theorem \ref{SpanThm}. If $m=0$, then $\rank(G')\leq m=0$. Hence, $G$ is a free abelian group.  Moreover, $\rank(G/Z(G))+\rank(Z(G))=\rank(G)=n$, and thus $G$ admits the $\tau_2$-presentation $$\langle a_1, \dots, a_n \mid [a_i, a_j]=1 \ \hbox{for all} \ i,j \rangle.$$ This presentation belongs to $\mc{P}(n,0)$.

Suppose $n=0,1$. By hypothesis, $\rank(Z(G))=n+m-\rank(G/Z(G))$, and $m \leq n+m -\rank(G/Z(G))$. Consequently, $\rank(G/Z(G))\leq n\leq 1$. This implies, again, that $G$ is a free abelian group. Then, as before, $\rank(G/Z(G)) + \rank(Z(G))= \rank(G)=n+m$, and $G$ admits the $\tau_2$-presentation $\langle a_1, c_1, \dots, c_m \mid [a_1, c_i]=[c_j, c_k]=1 \ \hbox{for all} \ i,j,k \rangle$ (if $n=0$ omit  $a_1$). This presentation belongs to $\mc{P}(n,m)$, as needed.
\end{proof}


We adopt the following standard convention regarding the projection of elements onto quotient groups: Suppose  $K$ has been obtained from another group (or monoid) $H$ by adding some relations, and let $\pi:H\to K$ be the canonical projection of $H$ onto $K$. To avoid referring to $\pi$, we  speak of  \emph{elements $h$ from $H$ seen in  $K$} (or \emph{projected onto $K$}), rather than of elements $\pi(h)$. Similarly, for $h_1, h_2\in H$, instead of  writing $\pi(h_1) = \pi(h_2)$, we  say that  \emph{$h_1=h_2$ in $K$}, or $h_1 =_K h_2$. For example, if a group $G$ has a generating set $A$,  then we  speak of   \emph{($A^{\pm 1}$)-words seen in $G$}, and of  \emph{equalities between ($A^{\pm 1}$)-words taking place in $G$.}  
 
 The next result is fundamental for our purposes.
%
%
%
\begin{lemma}\label{malcevBa}
Let $G$ have a $\tau_2$-presentation as in \eqref{NPpres}, for $n\geq 2$, $m\geq 1$. Then there exist maps $\alpha_i, \gamma_t: G \to \mathbb{Z}$ such that each $g\in G$ can be written uniquely as
\begin{equation}\label{eq}
g= \prod_{i=1}^n a_i^{\alpha_i(g)} \prod_{t=1}^m c_t^{\gamma_t(g)},
\end{equation}
in the sense that if $g= \prod_i a_i^{x_i} \prod_t c_t^{y_t}$ for some  $x_i, y_t$, then, necessarily, $x_i = \alpha_i(g)$ and $y_t=\gamma_t(g)$ for all $i, t$.
\end{lemma}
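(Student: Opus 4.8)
The plan is to split the statement into an easy \emph{existence} part and a substantial \emph{uniqueness} part, proving the former by a quotient argument and the latter by exhibiting a concrete group in which the candidate normal forms are forced to stay distinct. For existence, set $N=\langle C\rangle\leq G$. The relations $[A,C]=[C,C]=1$ say that every $c_t$ is central, so $N\leq Z(G)$ is normal in $G$; since the $c_t$ pairwise commute, $N$ is abelian and every element of $N$ has the form $\prod_t c_t^{\gamma_t}$. In the quotient $G/N$ the images of the $a_i$ commute, because $[a_i,a_j]=\prod_t c_t^{\lambda_t^{ij}}\in N$, so $G/N$ is abelian and generated by the cosets $a_iN$. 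Hence for any $g\in G$ we may write $gN=\prod_i(a_iN)^{\alpha_i}$, i.e.\ $g=\prod_i a_i^{\alpha_i}\cdot n$ with $n\in N$, and then $n=\prod_t c_t^{\gamma_t}$. This produces an expression of the form \eqref{eq}.

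The real content is uniqueness, i.e.\ that the map $\mathbb{Z}^{n+m}\to G$ sending $(\mathbf{x},\mathbf{y})$ to $\prod_i a_i^{x_i}\prod_t c_t^{y_t}$ is injective. To prove this I would build an explicit model group $H$ on the set $\mathbb{Z}^n\times\mathbb{Z}^m$ with multiplication
$$ (\boldsymbol{\alpha},\boldsymbol{\gamma})\cdot(\boldsymbol{\beta},\boldsymbol{\delta})=\bigl(\boldsymbol{\alpha}+\boldsymbol{\beta},\ \boldsymbol{\gamma}+\boldsymbol{\delta}+\mathbf{f}(\boldsymbol{\alpha},\boldsymbol{\beta})\bigr),\qquad f_t(\boldsymbol{\alpha},\boldsymbol{\beta})=-\sum_{1\leq j<k\leq n}\lambda_t^{jk}\,\alpha_k\beta_j. $$
The map $\mathbf{f}=(f_1,\dots,f_m)\colon\mathbb{Z}^n\times\mathbb{Z}^n\to\mathbb{Z}^m$ is bilinear, and for bilinear $\mathbf{f}$ the associativity of this multiplication is automatic: equating the $\mathbb{Z}^m$-components of the two associated products reduces, after cancelling the abelian part, to the cocycle identity $\mathbf{f}(\boldsymbol{\alpha},\boldsymbol{\beta})+\mathbf{f}(\boldsymbol{\alpha}+\boldsymbol{\beta},\boldsymbol{\gamma})=\mathbf{f}(\boldsymbol{\beta},\boldsymbol{\gamma})+\mathbf{f}(\boldsymbol{\alpha},\boldsymbol{\beta}+\boldsymbol{\gamma})$, which holds because both sides expand to $\mathbf{f}(\boldsymbol{\alpha},\boldsymbol{\beta})+\mathbf{f}(\boldsymbol{\alpha},\boldsymbol{\gamma})+\mathbf{f}(\boldsymbol{\beta},\boldsymbol{\gamma})$. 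The pair $(\mathbf{0},\mathbf{0})$ is a two-sided identity and inverses are written down directly, so $H$ is a group.

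Next I would check that $H$ satisfies the defining relations of $G$ under the assignment $a_i\mapsto(\mathbf{e}_i,\mathbf{0})$, $c_t\mapsto(\mathbf{0},\mathbf{e}_t)$. Since $\mathbf{f}(\cdot,\mathbf{0})=\mathbf{f}(\mathbf{0},\cdot)=\mathbf{0}$, each $(\mathbf{0},\mathbf{e}_t)$ is central and the images of the $c_t$ commute, giving $[A,C]=[C,C]=1$; and a short computation comparing $(\mathbf{e}_i,\mathbf{0})(\mathbf{e}_j,\mathbf{0})$ with $(\mathbf{e}_j,\mathbf{0})(\mathbf{e}_i,\mathbf{0})$ shows, for $i<j$, that they differ exactly by $\prod_t c_t^{\lambda_t^{ij}}$, reproducing $[a_i,a_j]=\prod_t c_t^{\lambda_t^{ij}}$ (flipping the global sign of $\mathbf{f}$ if the convention requires it). By von Dyck's theorem the assignment extends to a homomorphism $\phi\colon G\to H$. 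Applying $\phi$ to $\prod_i a_i^{x_i}\prod_t c_t^{y_t}$ yields an element whose $\mathbb{Z}^n$-coordinate is exactly $\mathbf{x}$ and whose $\mathbb{Z}^m$-coordinate is $\mathbf{y}+\mathbf{g}(\mathbf{x})$ for a fixed function $\mathbf{g}$ (the accumulated central correction). Because $(\mathbf{x},\mathbf{y})\mapsto(\mathbf{x},\mathbf{y}+\mathbf{g}(\mathbf{x}))$ is a bijection of $\mathbb{Z}^{n+m}$, the composite $\mathbb{Z}^{n+m}\to G\to H$ is injective, hence so is $\mathbb{Z}^{n+m}\to G$. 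This is precisely the claimed uniqueness, and it legitimizes defining the coordinate maps $\alpha_i,\gamma_t$ by reading off the exponents.

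The main obstacle is the construction and verification of the model $H$: one must identify the correct cocycle $\mathbf{f}$ (equivalently, run the collection process once to extract the central correction terms with the right signs) and confirm that $H$ really does satisfy the presentation's relations. Once bilinearity of $\mathbf{f}$ is established, associativity comes for free, so the only genuine bookkeeping is getting the signs right in the commutator relations and confirming that $\phi$ acts as claimed on normal forms; everything else is routine.
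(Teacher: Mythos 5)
Your proof is correct, but it takes a genuinely different route from the paper's. The paper realizes $G$ as a quotient of $N_0 = N_{2,n}(A)\times\mathbb{Z}^m(C)$ (free $2$-step nilpotent times free abelian) by the normal closure of the relators $r_{i,j}=[a_i,a_j]\prod_t c_t^{-\lambda_t^{ij}}$; to prove uniqueness it takes a word $\prod_i a_i^{\alpha_i}\prod_t c_t^{\gamma_t}$ that is trivial in $G$, writes it in $N_0$ as a product of conjugates of the $r_{i,j}^{\pm1}$, collects these (all central in $N_0$) into $\prod_{i<j}\bigl([a_i,a_j]^{\ell_{i,j}}\prod_t c_t^{-\ell_{i,j}\lambda_t^{ij}}\bigr)$, and then invokes the known normal form in $N_{2,n}(A)$ (that $(A;\{[a_i,a_j]\}_{i<j})$ is a Malcev basis there) to force all exponents to vanish. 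You instead build an explicit cocycle model $H$ on $\mathbb{Z}^n\times\mathbb{Z}^m$ and map $G$ onto it by von Dyck's theorem; injectivity of the normal-form map then falls out of reading off coordinates in $H$. Your route is more self-contained --- it does not presuppose the normal form in the free $2$-step nilpotent group, which is itself usually proved by exactly this kind of cocycle or collection argument --- at the price of verifying associativity and the defining relations in $H$ (both of which you do correctly; with your choice of $\mathbf{f}$ the correction term $\mathbf{g}(\mathbf{x})$ is in fact identically zero, since products taken in increasing index order accumulate no corrections, but your weaker observation that $(\mathbf{x},\mathbf{y})\mapsto(\mathbf{x},\mathbf{y}+\mathbf{g}(\mathbf{x}))$ is a bijection suffices). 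The paper's argument is shorter given the standard fact about $N_{2,n}$, while yours would generalize more readily to settings where that fact is not available off the shelf.
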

\begin{proof}
Let $N_0 = N(A) \times \mathbb{Z}^m(C)$ be the direct product of the $2$-step free nilpotent group $N(A)$ with basic generating set $A$, and the free abelian group $\mathbb{Z}^m(C)$ with basis $C$. $N_0$ admits the presentation $\langle A, C \mid [A,[A, A]]=[A,C]=[C, C]=1 \rangle$. Hence, $G$ can be obtained from $N_0$ by adding the relations $r_{i,j}=[a_i, a_j]\prod_t c_t^{-\l_{t,i,j}}=1$ for all $1\leq i<j \leq n$. 

Using presentation \eqref{NPpres} and the identity $a_j a_i = a_i a_j [a_i, a_j]^{-1}$, one sees that any $g\in G$ can be written as in \eqref{eq} for some  $\alpha_i$'s and $\gamma_t$'s. We next show that these $\alpha_i$ and $\gamma_t$ are unique. To do so, it suffices to prove that  $g=1$ in  $G$ if and only if $\alpha_i = \gamma_t = 0$ for all $i$ and $t$. 
Assume  that $g=\prod a_i^{\alpha_i} \prod c_t^{\gamma_t}=1$ in $G$.
 %
Then, in $N_0$, $g$ equals a product of conjugates of elements $r_{i, j}$:
\begin{equation}\label{eq5}
g= \prod_i^n a_i^{\alpha_i} \prod_{t=1}^m c_t^{\gamma_t} =_{N_0} \prod_{\epsilon} w_{\epsilon}^{-1} r_{i_{\epsilon}, j_{\epsilon}}^{\pm 1} w_e =_{N_0} \prod_{1\leq i < j \leq n} \left(  [a_i, a_j]^{\ell_{i,j}} \prod_t c_t^{-\ell_{i,j} \lambda_{t, i,j}} \right),
\end{equation}
where $\ell_{i,j}$ is the sum of the exponents of  the $r_{i, j}$ appearing in the the expression above, for each $i<j$. In the last step we have used that all commutators and  $c$'s belong to the center of $N_0$. Hence, the equality \eqref{eq5} takes place in $N_0 = N(A) \times \mathbb{Z}^m(C)$. By definition of direct product, and since $(A; \{[a_i, a_j] \mid i< j\})$ is a Malcev basis of $N(A)$, we have that $\alpha_i = \ell_{j,k}=0$ for all $i$ and $j<k$. We conclude that $\prod_t c_t^{\gamma_t}= 1$ in $N_0$, which can only occur if $\gamma_t = 0$ for all $t$.
\end{proof}
\begin{cor}\label{superCor}
Suppose $G$ is a group with $\tau_2$-presentation \eqref{NPpres}, with $n\geq 2$, $m\geq 1$. Then the following holds: 1) $G$ is a $\tau_2$-group with Malcev basis $(A; C)$.   2) $Z(G)$ admits a basis of the form $C\cup D$ for some set $D$. 3)  $\rank(G/Z(G))=n-|D|$. 4) $\rank(G/Z(G)) + \rank(Z(G)) = n + m$. 5)   $\rank(G') \leq m \leq \rank(Z(G))$.
\end{cor}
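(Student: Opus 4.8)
The whole corollary will be read off from Lemma \ref{malcevBa}, which gives every $g\in G$ a unique normal form $g=\prod_i a_i^{\alpha_i(g)}\prod_t c_t^{\gamma_t(g)}$ as in \eqref{eq}.

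For Part 1, the presentation \eqref{NPpres} makes $G$ finitely generated, and since $[a_i,a_j]\in\C$ and $[A,C]=[C,C]=1$, every commutator of generators is central; hence $G'\leq\C\leq Z(G)$ and $G$ is $2$-step nilpotent. To get torsion-freeness I would note that, working modulo $G'\leq\C$, the $\alpha$-coordinates are additive, so $\alpha_i(g^k)=k\,\alpha_i(g)$ for every $k$; if $g^k=1$ with $k\neq 0$ then each $k\,\alpha_i(g)=0$ forces $\alpha_i(g)=0$, placing $g\in\C$, and the uniqueness of the $\gamma$'s identifies $\C$ with the free abelian group $\mathbb{Z}^m$, which is torsion-free, so $g=1$. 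The same uniqueness of the $\alpha$-coordinates shows $G/\C\cong\mathbb{Z}^n$ is free abelian, so no $a_i\C$ has finite order and $(A;C)$ is a Malcev basis by Definition \ref{malcevBDfn}. Thus $G$ is a $\tau_2$-group, which is Part 1.

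For Parts 2 and 3 the plan is purely structural. Since $\C\leq Z(G)\leq G$ and $G/\C\cong\mathbb{Z}^n$ is free abelian, the subgroup $Z(G)/\C$ is free abelian, so the extension $1\to\C\to Z(G)\to Z(G)/\C\to 1$ splits; taking $D$ to be a set of lifts of a basis of $Z(G)/\C$, I claim $C\cup D$ is a basis of $Z(G)$, the linear independence following by projecting a vanishing $\mathbb{Z}$-combination down to $Z(G)/\C$ to kill the $C$-part. This is Part 2, and it gives $\rank(Z(G))=m+|D|$. For Part 3 I would apply additivity of rank to the short exact sequence $1\to Z(G)/\C\to G/\C\to G/Z(G)\to 1$, whose outer terms have ranks $|D|$ and $n$, yielding $\rank(G/Z(G))=n-|D|$.

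Parts 4 and 5 are then bookkeeping: Part 4 is $(n-|D|)+(m+|D|)=n+m$, and Part 5 follows since $G'\leq\C\cong\mathbb{Z}^m$ gives $\rank(G')\leq m$, while $C$ being part of the basis $C\cup D$ gives $m\leq m+|D|=\rank(Z(G))$. The only steps requiring genuine care are the power computation behind torsion-freeness in Part 1 and the splitting in Part 2 --- both hinge on the freeness coming out of Lemma \ref{malcevBa} --- whereas the rank identities are routine.
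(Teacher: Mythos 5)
Your proof is correct, and for Parts 1, 4 and 5 it essentially matches the paper's argument (two-step nilpotency via centrality of the $[a_i,a_j]$'s, torsion-freeness via $\alpha_i(g^k)=k\alpha_i(g)$, and the rank inequalities as bookkeeping). Where you genuinely diverge is in Parts 2 and 3. The paper proves that $C$ extends to a basis of $Z(G)$ by a coordinate argument: it invokes the criterion that each $c_j$ must have coprime coordinates with respect to any basis $E$ of $Z(G)$, and verifies this by writing $c_j=w^d$ and deducing $1=\gamma_j(c_j)=d\gamma_j(w)$; it then obtains Part 3 by showing that the vectors $(\alpha_1(d_j),\dots,\alpha_n(d_j))$ are $\mathbb{Z}$-linearly independent and reading the rank off an explicit presentation of $G/Z(G)$. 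You instead observe that $Z(G)/\C$ embeds in the free abelian group $G/\C\cong\mathbb{Z}^n$, hence is itself free abelian, so the extension $1\to\C\to Z(G)\to Z(G)/\C\to 1$ splits and $C$ together with lifts $D$ of a basis of $Z(G)/\C$ is a basis of $Z(G)$; Part 3 then follows from additivity of rank along $1\to Z(G)/\C\to G/\C\to G/Z(G)\to 1$. Your route is shorter and avoids the coordinate computations entirely. The one input you use without justification is that additivity of the minimal number of generators along that last sequence requires $G/Z(G)$ to be torsion-free: a quotient of $\mathbb{Z}^n$ by a free subgroup can acquire torsion, which would inflate the minimal generating number beyond the torsion-free rank. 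That $G/Z(G)$ is free abelian for a $\tau_2$-group is recorded in the paper's preliminaries (the center of a torsion-free nilpotent group is isolated) and is available once your Part 1 is established, but you should cite it explicitly; with that reference added, your argument is complete.
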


\begin{proof}
To see that $G$ is $2$-nilpotent it suffices to show that all commutators belong to the center. To do so, one may use the formulas $[xz,y]=z^{-1}[x,y]z[z,y]$, $[x,y]=[y,x]^{-1}$, and the fact that $G=\langle A\cup C \rangle$, $C\subset Z(G)$, to express any commutator $[g,h]$ as a product of commutators from $\{[a_i, a_j] \mid 1\leq i,j\leq n\}$. By definition, the latter belong to $Z(G)$, and hence so does $[g,h]$. We also obtain that $G' \leq \langle C \rangle$, because each $[a_i, a_j]$ equals a product of elements from $C^{\pm 1}$. 

Let $\alpha_i, \gamma_t: G \to \mathbb{Z}$  be the maps of Lemma \ref{malcevBa}. To prove that $G$ is torsion-free, assume $g^k = 1$ in $G$ for some $g \in G$ and some $k\neq 0$. By the previous Lemma \ref{malcevBa},  we must have have  $\alpha_i(g^k) =0$ for all $i$. Observe that $\alpha_i(g^k)= k \alpha_i(g)$, and so $\alpha_i(g) = 0$ for all $i$. It follows that $g \in \langle C \rangle$, which, again by Lemma \ref{malcevBa}, is a free abelian group with basis $C$, and thus we must have $g=1$. This proves that $G$ is a $\tau_2$-group.

Since $G' \leq \langle C\rangle$, $G/\langle C \rangle$ is an abelian group. By Lemma \ref{malcevBa}, it is free abelian with basis (the projection of) $A$. It follows that $(A;C)$ is a Malcev basis of $G$. 

We next prove that $Z(G)$ has a basis of the form $C \cup D$ for some set $D$. Indeed, it is well known that this occurs if and only if, for any basis $E=\{e_i\mid i\}$ of $Z(G)$, the coordinates of each $c_j\in C$ with respect to $E$ are coprime. To show this is the case, suppose the coordinates of $c_j \in C$ are all divisible by some $d\neq 0$. Then, in $G$,
$$
c_j= \prod_i e_i^{dk_i} = \left(\prod_ie_i^{k_i}\right){}^{d}
$$
for some $k_i$'s (the second equality holds because  $E \subset Z(G)$). Denote $w=\prod e_i^{k_i}$. As before, $\alpha_i(c_j)=0=\alpha_i(w^d)=d\alpha_i(w)$, and so $\alpha_i(w)=0$ for all $i$. It follows that $\gamma_t(w^d) = d\gamma_t(w)$. We finally  obtain that $1=\gamma_j(c_j)=\gamma_j(w^d)= d \gamma_j(w)$. Hence $d=-1, 1$, as needed.

Let $C\cup D$ be a basis of $Z(G)$, with $D=\{d_1, \dots, d_{|D|}\}$. 
We now show that the vectors $\left\{\mathbf{a}_j=(\alpha_1(d_j), \dots, \alpha_{n}(d_j)) \mid j=1, \dots, |D|\right\}$ are linearly independent. Indeed, suppose not. Then there exist integers $k_1, \dots, k_{|D|}$ such that $\sum_j k_j \mathbf{a}_j =\mathbf{0}$. Let $g= \prod_j \scriptstyle{d_j{}^{\scriptscriptstyle{k}_j}}$. Then $\alpha_i(g)=0$ for all $i$. Writing $g$ in its Malcev representation form we obtain, in $G$, $$g=\prod_j d_j^{k_j} = \prod_i a_i^{\alpha_i(g)} \prod_t c^{\gamma_t(g)} = \prod_t c^{\gamma_t(g)}.$$ This is a linear combination in $Z(G)$ between elements of the basis $C \cup D$. Hence, we must have $k_j=\gamma_t(g)=0$ for all $j$ and $t$. This proves that the $\mathbf{a}_j$'s form a set of linearly independent vectors.

From this it follows that $G/Z(G)$, which is equal to $$\langle A, C \mid C= [A,A]= d_1 = \dots = d_{|D|}=1\rangle$$ is a free abelian group of rank $n - |D|$. 
On the other hand, $Z(G)$ has rank $|C|+|D| = m+|D|$, and so $\rank(G/Z(G)) +  \rank(Z(G)) = n + m$.
Finally, since $\langle C \rangle$ is a free abelian group with basis $C$, and $G'\leq \langle C \ranlge \leq Z(G)$,  we obtain $\rank(G')\leq |C|=m \leq \rank(Z(G))$.
%
%
%
\end{proof}

We next show that every  $\tau_2$-group admits a $\tau_2$-presentation:

\begin{lemma}\label{neatFormGLemma}

Any $\tau_2$-group $G$ admits a presentation of the form \begin{equation}\label{eq00}G=\langle A, C\mid  [a_i, a_j]= \prod_{t=1}^{|C|} c_t^{\l_{t}^{ij}}, \ [A,C]=[C,C]=1, \ (1\leq i<j \leq |A|)\rangle,\end{equation}  where $C=\{c_j\}$ is a basis of $Z(G)$ and $A=\{a_i\}$ projects to a basis of $G/Z(G)$. Moreover, the tuple $(A;C)$ is a Malcev basis of $G$. 
\end{lemma}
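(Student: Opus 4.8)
The plan is to construct the presentation directly from the subgroup structure of $G$ and then use the lemmas already established to certify that the relations we write down are complete. First I would choose the generators: since $G$ is f.g.\ torsion-free nilpotent, both $Z(G)$ and $G/Z(G)$ are free abelian of finite rank, so I pick a basis $C=\{c_1,\dots,c_m\}$ of $Z(G)$ and elements $a_1,\dots,a_n\in G$ whose images $a_iZ(G)$ form a basis of $G/Z(G)$. Because $G$ is $2$-step nilpotent we have $G'\leq Z(G)=\C$, so every commutator $[a_i,a_j]$ lies in $\C$ and can be written as $\prod_{t} c_t^{\l_t^{ij}}$ for unique integers $\l_t^{ij}$; these are exactly the exponents appearing in \eqref{eq00}. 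If $G$ is abelian the claim is immediate ($G$ is then free abelian, $A$ is empty, and \eqref{eq00} reduces to the standard free-abelian presentation), so from now on I assume $G$ is non-abelian, in which case $n=\rank(G/Z(G))\geq 2$ and $m=\rank(Z(G))\geq 1$.

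Next I would compare $G$ with the abstract group $\bar G$ defined by the presentation \eqref{eq00} on these data. All defining relations of $\bar G$ hold in $G$: the commutator relations by the very choice of the $\l_t^{ij}$, and $[A,C]=[C,C]=1$ because $C\subset Z(G)$. Hence the assignment of generators to generators extends to a homomorphism $\pi:\bar G\to G$, which is surjective since $\langle A,C\rangle=\langle A, Z(G)\rangle=G$. It remains only to show that $\pi$ is injective, and this is the crux of the argument.

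For injectivity I would invoke Corollary \ref{superCor}: as \eqref{eq00} is a $\tau_2$-presentation in $\mc{P}(n,m)$ with $n\geq 2$ and $m\geq 1$, that corollary (through Lemma \ref{malcevBa}) guarantees that $\bar G$ is a $\tau_2$-group with Malcev basis $(A;C)$, so every $\bar g\in\bar G$ has a \emph{unique} expression $\bar g=\prod_i a_i^{\alpha_i}\prod_t c_t^{\gamma_t}$. If $\bar g\in\ker\pi$, then $\prod_i a_i^{\alpha_i}\prod_t c_t^{\gamma_t}=1$ holds in $G$; projecting to $G/Z(G)$ and using that the $a_iZ(G)$ form a basis forces all $\alpha_i=0$, after which $\prod_t c_t^{\gamma_t}=1$ in $Z(G)$ together with $C$ being a basis of $Z(G)$ forces all $\gamma_t=0$. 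Thus $\bar g=1$ in $\bar G$, so $\ker\pi$ is trivial and $\pi$ is an isomorphism. Finally, the ``moreover'' clause is immediate: transporting the Malcev basis of $\bar G$ through $\pi$ (or arguing directly that $G/\C=G/Z(G)$ is free abelian with basis the images of $A$, while $G'\leq\C=Z(G)$) shows that $(A;C)$ is a Malcev basis of $G$. The one genuinely substantial step is the injectivity of $\pi$ — that no further relations are needed — and everything there is carried by Corollary \ref{superCor}, which supplies the uniqueness of Malcev coordinates in $\bar G$.
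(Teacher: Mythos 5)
Your proof is correct and follows essentially the same route as the paper: both arguments hinge on the uniqueness of Malcev coordinates in the group presented by \eqref{eq00} (Lemma \ref{malcevBa} / Corollary \ref{superCor}) played against the uniqueness of coordinates in $G$ coming from $C$ being a basis of $Z(G)$ and $A$ projecting to a basis of $G/Z(G)$. The only (cosmetic) difference is packaging: the paper fixes a finite presentation of $G$ and rewrites its extra relators to normal form to see they are consequences of the $\tau_2$-relations, whereas you show directly that the canonical surjection $\bar G\to G$ has trivial kernel, which sidesteps the appeal to finite presentability of f.g.\ nilpotent groups.
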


\begin{proof}
The result is immediate if $G$ is a free abelian group. Assume it is not. Since finitely generated nilpotent groups are finitely presented, we have that $G=\langle A, C \mid r_1, \dots, r_k\rangle$ for some relations $r_i$ (which we see as words on $(A\cup C)^{\pm 1}$). Each $[a_i, a_j]$ can be written as a linear combination of the $c_t$'s, because $G' \leq Z(G)$. Let $\lambda_{t, i,j}$ be the coefficients of these combinations, and let $P \in \mc{P}(n, m)$ be the $\tau_2$-presentation given by $A, C,$ and these $\lambda_{t,i,j}$'s. We claim that $G$ admits the presentation $P$. Indeed, denote by $H$  the group presented by $P$.  It is clear that all relations from $P$ hold in $G$. On the other hand, using the relations from $P$, we may rewrite each word $r_k$, \emph{both in $G$ and $H$,} into the form
$$
r_k=\prod_i a_i^{\alpha_i} \prod_t c_t^{\gamma_t}.
$$
Since $(A; C)$ is a Malcev basis of $G$, and $r_k=1$ in $G$, we must have $\alpha_i=\gamma_t=0$ for all $i,t$, and hence $r_k=1$ in $H$ as well. It follows that $G= H$, as needed. The last statement of the Lemma follows directly from Corollary \ref{superCor}.
\end{proof}

An immediate consequence of the previous two results (Corollary \ref{superCor} and Lemma \ref{neatFormGLemma}) is that:
\begin{cor}
The class of $\tau_2$-groups coincides with the family of groups that admit a $\tau_2$-presentation. 
\end{cor}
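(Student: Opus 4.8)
The plan is to observe that the final corollary is essentially a repackaging of the two immediately preceding results, so the work is to assemble the two implications of a set-theoretic equality and verify each admits the other. The claim is the equality of two classes: the class $\mc{T}$ of all $\tau_2$-groups, and the class $\mc{Q}$ of all groups admitting a $\tau_2$-presentation (of the form \eqref{NPpres}, for some $n,m\geq 0$). I would prove $\mc{Q}\subseteq \mc{T}$ and $\mc{T}\subseteq \mc{Q}$ separately.

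First I would establish $\mc{Q}\subseteq \mc{T}$: every group presented by \eqref{NPpres} is a $\tau_2$-group. For the generic case $n\geq 2$, $m\geq 1$, this is precisely Item 1 of Corollary \ref{superCor}, which already asserts that such a $G$ is a $\tau_2$-group with Malcev basis $(A;C)$. The remaining degenerate parameter ranges, namely $n=0,1$ or $m=0$, are handled by Lemma \ref{degenerate_cases} (or directly by inspection: these presentations define free abelian groups, which are $\tau_2$-groups). So this inclusion is covered by combining Corollary \ref{superCor} with Lemma \ref{degenerate_cases}.

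Next I would establish the reverse inclusion $\mc{T}\subseteq \mc{Q}$: every $\tau_2$-group admits a $\tau_2$-presentation. This is exactly the content of Lemma \ref{neatFormGLemma}, which produces, for an arbitrary $\tau_2$-group $G$, a presentation of the form \eqref{eq00} with $C$ a basis of $Z(G)$ and $A$ projecting to a basis of $G/Z(G)$; since \eqref{eq00} is an instance of \eqref{NPpres} (with $[A,C]=[C,C]=1$ and the commutators $[a_i,a_j]$ written as words in the $c_t$'s), this shows $G\in \mc{Q}$. The free abelian case is dealt with at the start of that lemma's proof, so all $\tau_2$-groups are covered.

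I do not expect any genuine obstacle here, since both inclusions are already proved as named results; the only care needed is bookkeeping over the degenerate parameter values $n\in\{0,1\}$ and $m=0$, where Corollary \ref{superCor} does not directly apply and one must invoke Lemma \ref{degenerate_cases} instead. Having both inclusions, the two classes coincide, which is the assertion of the corollary. One could phrase the whole argument in a single sentence: Corollary \ref{superCor} and Lemma \ref{degenerate_cases} give $\mc{Q}\subseteq\mc{T}$, Lemma \ref{neatFormGLemma} gives $\mc{T}\subseteq\mc{Q}$, and therefore $\mc{T}=\mc{Q}$.
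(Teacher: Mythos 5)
Your proof is correct and matches the paper's own argument: the paper states the corollary as an immediate consequence of Corollary \ref{superCor} (one inclusion) and Lemma \ref{neatFormGLemma} (the other). Your extra care with the degenerate cases $n\in\{0,1\}$ or $m=0$ via Lemma \ref{degenerate_cases} is a reasonable bit of bookkeeping that the paper leaves implicit.
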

%
%
%
%

\begin{proof}[\emph{\textbf{Proof of Theorem \ref{SpanThm}}}]
The degenerate cases $n=0,1$ or $m=0$ were treated in Lemma \ref{degenerate_cases}. Assume $n\geq 2$ and $m\geq 1$. Then the implication $1) \Rightarrow 2)$ follows from Corollary \ref{superCor}. Conversely, suppose $G$ is a $\tau_2$-group satisfying  2), and let $A=\{a_1, \ldots, a_{n(G)}\}$, $C=\{c_1, \ldots, c_{m(G)}\}$  be bases of $G/Z(G)$ and $Z(G)$, respectively.   
By Lemma \ref{neatFormGLemma}, $G$ admits a $\tau_2$-presentation $P \in \mc{P}(n(G), m(G))$ of the form
\begin{equation}\label{eq0}
G=\langle A, C \mid  [a_i, a_j] = \prod_t^{m(G)} c_t^{\lambda_t^{ij}}, \ [A, C]=[C,C]=1, \ 1\leq i< j \leq n(G)\rangle,
\end{equation}
%
%
and $(A;C)$ is a Malcev basis of $G$. Notice that the first relations in \eqref{eq0} express each element of the generating set $\{[a_i,a_j]\mid i< j\}$ of $G'\leq\langle C \rangle\leq Z(G)$ as a linear combination of the $c_t$'s.  It is well known (from basic facts of the theory of abelian groups) that the rank of $G'$ coincides with the rank of the matrix $M$ whose rows are  $\{(\lambda_{1, i,j}, \dots, \lambda_{m(G), i, j})\mid i<j\}$.  Moreover, column operations in $M$ (interchanging two columns, and adding or subtracting one column to another different column)  correspond to changing the basis $C$  (on the other hand, row operations correspond to changing the generating set of $G'$, but this is of no use to us). Hence, by choosing a new basis of $Z(G)$, which we still denote $C$, we may assume that the last $m(G) - \rank(G')$ columns of $M$ are $0$. We thus have
\begin{equation*}\label{eq1}
G= \langle A, C \mid [a_i, a_j]= c_1^{\lambda_{1}^{ij}} \dots  c_{r}^{\lambda_{r}^{ij}} c_{r+1}^0 \dots c_m^0 \dots c_{m(G)}^0,  \ [A,C]=[C,C]=1 \rangle,
\end{equation*}
%
%
%
where $r= \rank(G')$. 
By hypothesis, $r\leq m \leq m(G)$, which allows us to split $C$ into two  sets: $C_0=\{c_1, \dots, c_{m}\}$ and $C_1=\{c_{m+1}, \dots, c_{m(G)}\}$ ($C_1$ is empty if $m(G)=m$), and to rewrite   \eqref{eq1} as
\begin{align}
G= \langle A\cup C_{1}, C_0 \mid \  &[a_i, a_j] =  c_1^{\lambda_{1}^{ij}} \dots  c_{r}^{\lambda_{r}^{ij}} c_{r+1}^0 \dots c_{m}^0, \quad (1 \leq i < j \leq n(G)), \nonumber  \\
&[A, C_{1}]= [C_{1}, C_{1}] = \prod_{t=1}^{m} c_t^0,\ [A\cup C_{1}, C_0 ]= [C_0, C_0 ]=1 \rangle. \nonumber
\end{align}
Finally, notice that  $|A\cup C_{1}|= n(G) + m(G) - m = n+m -m =n$, and so the above presentation belongs to $\mathcal{P}(n,m)$. This completes the proof of Theorem \ref{SpanThm}.
\end{proof}



\subsection{Main results}\label{densitySec}
Through this section  we fix integers $n, m$, and sets $A, C$, with $|A|=n\geq 2$, and $|C|=m\geq 1$. We make the harmless assumption  that all presentations from $\mc{P}(n,m)$ use the sets $A, C$. Thus, from now on, a group $G$ admits a presentation from $\mc{P}(n,m)$ if and only if there exist integers $\l_{t,i,j}$ such that
\begin{equation}\label{tau2pres2}
G=\langle A, C \mid [a_i, a_j]=\prod_{t=1}^{m} c_t^{\l_t^{ij}},\ [A,C]=[C,C]=1,\ 1\leq i<j\leq m \rangle.
\end{equation}
We denote by $\mc{P}(n,m,\ell)$ the set of  $\tau_2$-presentations from $\mc{P}(n,m)$ with exponents satisfying $|\l_{t,i,j}|\leq \ell$ for all $t,i,j$. 

In this section we prove the two main results of this paper, namely Theorems \ref{mainthm} and \ref{regularitythm}. We start by estimating the number of vectors (with bounded integer  entries) that are linearly independent with a given set of vectors. 
\begin{lemma}\label{countlemma}
Let $V=\{\mathbf{v}_1, \dots, \mathbf{v}_s\}$ be a collection of  $1\times t$ linearly independent vectors with entries in $\mathbb{Z}$ ($s\leq t$). Denote by $I^t$ the set of all $1\times t$ vectors  with entries belonging to a finite set $I\subset \mathbb{Z}$.  Then there are at most $|I|^{|V|}$ vectors from $I^t$ that, together with the $\mathbf{v}_i$'s, form a set of linearly dependent vectors. Consequently, there are at least $|I|^{t} - |I|^{s}$ vectors from $I^t$ that are linearly independent together with the $\mathbf{v}_i$'s. 
\end{lemma}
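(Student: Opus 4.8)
The plan is to translate the combinatorial statement into a counting problem inside a fixed linear subspace. Since $\mathbf{v}_1,\dots,\mathbf{v}_s$ are linearly independent, a vector $\mathbf{w}$ makes $\{\mathbf{v}_1,\dots,\mathbf{v}_s,\mathbf{w}\}$ linearly dependent if and only if $\mathbf{w}$ lies in $W:=\operatorname{span}_{\mathbb{R}}(\mathbf{v}_1,\dots,\mathbf{v}_s)$, an $s$-dimensional subspace of $\mathbb{R}^t$. (As noted earlier in the paper, for integer vectors linear (in)dependence over $\mathbb{Z}$, $\mathbb{Q}$, and $\mathbb{R}$ coincide, so I may freely argue over $\mathbb{R}$.) Hence it suffices to show that the number of box vectors landing in $W$ satisfies $|I^t\cap W|\leq |I|^s$, since then the ``at least $|I|^t-|I|^s$'' clause follows immediately by passing to the complement inside the $|I|^t$ vectors of $I^t$.

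To bound $|I^t\cap W|$ I would exploit the rank hypothesis via a coordinate projection. The $s\times t$ integer matrix $M$ whose rows are $\mathbf{v}_1,\dots,\mathbf{v}_s$ has full row rank $s$, so it possesses $s$ linearly independent columns; let $J\subseteq\{1,\dots,t\}$ with $|J|=s$ index such a set, and let $\pi_J\colon \mathbb{R}^t\to\mathbb{R}^J$ be the projection onto the coordinates in $J$. The key observation is that $\pi_J$ restricts to an \emph{injective} map on $W$: the $s\times s$ submatrix of $M$ on the columns in $J$ is invertible (its columns are independent by choice of $J$), so its rows $\pi_J(\mathbf{v}_1),\dots,\pi_J(\mathbf{v}_s)$ are also independent, and therefore $\pi_J$ carries the basis $\mathbf{v}_1,\dots,\mathbf{v}_s$ of $W$ to an independent set, making $\pi_J|_W$ injective.

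The bound is then immediate: for any $\mathbf{w}\in I^t\cap W$, its image $\pi_J(\mathbf{w})$ is a tuple of $s$ entries each lying in $I$, so $\pi_J(I^t\cap W)\subseteq I^J$, a set of size $|I|^s$. Since $\pi_J$ is injective on $W\supseteq I^t\cap W$, distinct elements of $I^t\cap W$ have distinct images, whence $|I^t\cap W|\leq |I^J|=|I|^s=|I|^{|V|}$. This is exactly the claimed upper bound on the number of ``bad'' (dependence-creating) vectors, and subtracting from $|I|^t$ yields the stated lower bound on the ``good'' (independence-preserving) vectors.

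The argument is essentially routine, and the only step that genuinely uses the hypotheses is the selection of the coordinate set $J$ on which $W$ injects; this is where full row rank $s$ enters, and it is the natural place to be careful (one must extract independent \emph{columns} of $M$ and then transfer independence to the corresponding \emph{rows} of the $s\times s$ minor). I would present this as the main technical point, everything else being bookkeeping.
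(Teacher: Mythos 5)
Your proof is correct and follows essentially the same route as the paper's: both arguments use the full-rank hypothesis to select $s$ coordinates on which the $\mathbf{v}_i$'s remain independent, and then observe that any vector in their span is determined by its entries in those coordinates (you phrase this as injectivity of the coordinate projection $\pi_J$ on $W$; the paper writes out the same fact explicitly as the remaining entries being fixed rational combinations of the chosen $s$ entries). The counting conclusion $|I|^s$ and the passage to the complement are identical.
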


\begin{proof}
Let $M$ be the $t \times s$ matrix whose $i$-th column consists in the coordinates of $\mb{v}_i$, for $i=1, \dots, a$.  We have $\rank(M)=s$, and thus there are $s$ rows among all rows $\mathbf{r}_1, \dots, \mathbf{r}_t$ of $M$ that form a set of linearly independent $1\times s$ vectors. For notation purposes, we assume these are $\mathbf{r}_1, \dots, \mathbf{r}_s$. For each $k=s+1, \dots, t$,  the vectors $\mathbf{r}_1, \dots, \mathbf{r}_s, \mathbf{r}_k$ are linearly dependent, and hence  there exist rational numbers $q_{k1}, \ldots, q_{ks}$ such that \begin{equation}\label{eq2}\mathbf{r}_{k}= \sum_{i=1}^{s} q_{ki} \mathbf{r}_i.\end{equation} 
By linear independence, the $q_{ki}$'s are unique.
Now  suppose $\mathbf{v}=(n_1, \ldots, n_t)\in I^t$ is a linear combination of the $\mathbf{v}_i$'s, and  let $M'$ be the matrix obtained from $M$ by adding $v$ as a column to the right of $M$. For each $k=1\dots, t$, we have that the $k$-th row of $M'$ is $\mathbf{r}_k' = (\mathbf{r}_k, n_k)$.  Notice that $M$ and $M'$ have the same rank $s$, and  that the first $s$ rows of $M'$  form a set of $1\times (s+1)$ linearly independent vectors, while the remaining rows are a linear combination of the first $s$ rows. By this and by unicity of the $q_{ki}$'s in \eqref{eq2}, $\mathbf{r}_k'= \sum_{i=1}^{s} q_{ki}\mathbf{r}_i'$ for all $k\geq s+1$. In particular,
\begin{equation}\label{eq3}
n_{k}=\sum_{i=1}^{s} q_{ki} n_i \quad \hbox{for all} \quad k\geq s+1.
\end{equation}
It follows that, when choosing $\mathbf{v}$, one has only freedom to specify its first $s$ entries, because the remaining ones are given by \eqref{eq3}. There are  $|I|^{s}$  ways of selecting $s$ integers from $I$, and hence there are at most that many $\mathbf{v}$'s. The last statement of the Lemma follows immediately  from this and from the fact that $|I^t|=|I|^t$. 
\end{proof}

We will need the following notation: for a group $G$ with presentation $P \in \mc{P}(n,m)$ as in \eqref{tau2pres2}, 
%
%
%
%
we denote $\bm{\l}_{i,j}=\left( \l_{1,i,j},\dots, \l_{m,i,j} \right)$ ($i<j$), and we let  $M_{P,k}$ be the following matrices, for $k=1,\dots, n$:
\begin{equation}\label{eq6}
M_{P, k} = \begin{pmatrix}  -\bm{\l}_{1,k}^T & \dots & -\bm{\l}_{k-1, k}^T & \bm{\l}_{k, k+1}^T & \dots & \bm{\l}_{k, n}^T \end{pmatrix},
\end{equation}
here $T$ stands for \emph{transposed}.

We recall that Corollary \ref{superCor} states that any group with presentation \eqref{tau2pres2} is a $\tau_2$-group with Malcev basis $(A;C)$. This fact  will  be used extensively from now on (sometimes implicitly). 
%
%
%
%
Recall also that $\mc{D}(G)$ denotes the Diophantine problem over $G$ (see Definition \ref{DiophDfn}).

We are now ready to prove Theorems \ref{mainthm} and \ref{regularitythm}. 
\begin{thm}\label{mainthm}
Suppose a $\tau_2$-group $G$ is chosen by randomly specifying a presentation $P$ from $\mc{P}(n,m,\ell)$ as in \eqref{tau2pres2}, with $m\geq n-1 \geq 1$. Then the following holds asymptotically almost surely as $\ell \to \infty$: $Z(G)=\C$, all the  $a_k$'s are c-small, and $[a_i, a_j]\neq 1$ for all $i<j$. 

Moreover, a.a.s.: $\mathbb{Z}$ is e-definable in $G$,  $\mc{D}(\mathbb{Z})$ is reducible to $\mc{D}(G)$, $\mathcal{D}(G)$ is undecidable,  $\mathbb{Z}$ is the maximal ring of scalars of $G$, and $G$ is directly indecomposable into non-abelian factors.
\end{thm}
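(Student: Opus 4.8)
The plan is to reduce all of the listed conclusions to three ``core'' facts that I will show hold a.a.s., namely that $[a_i,a_j]\neq 1$ for all $i<j$, that $\rank(M_{P,k})=n-1$ for every $k=1,\dots,n$, and consequently (through Proposition \ref{ThetaCor}) that $Z(G)=\C$ and every $a_k$ is c-small. Once these are in place the remaining statements are immediate consequences of results already proved in the paper. Since $a_1,a_2$ are c-small and $[a_1,a_2]\neq 1$, they constitute a non-commuting c-small pair, so Theorem \ref{mainThmEq2} yields e-definability of $\mathbb{Z}$ in $G$ and undecidability of $\mc{D}(G)$, and Corollary \ref{RedCor} gives the reduction of $\mc{D}(\mathbb{Z})$ to $\mc{D}(G)$. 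Because all $a_i$ are c-small, $[a_i,a_j]\neq 1$ for every $i\neq j$ (using antisymmetry of the commutator), and $(A;C)$ is a Malcev basis by Corollary \ref{superCor}, Theorem \ref{CSmallMaxZ} identifies the maximal ring of scalars of $G$ with $\mathbb{Z}$. Finally, Proposition \ref{dirIndec} then gives direct indecomposability into non-abelian factors.

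Thus the genuine work is entirely probabilistic. For the first core fact I would use that $(A;C)$ is a Malcev basis, so $[a_i,a_j]=1$ holds if and only if $\bm{\l}_{i,j}=\mathbf{0}$, an event of probability $(2\ell+1)^{-m}$ since the $m$ exponents are drawn uniformly and independently from $\{-\ell,\dots,\ell\}$. A union bound over the $\binom{n}{2}$ pairs bounds the failure probability by $\binom{n}{2}(2\ell+1)^{-m}$, which tends to $0$ as $\ell\to\infty$ because $m\geq 1$.

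The heart of the argument is the rank estimate. For a fixed $k$, the $n-1$ columns of $M_{P,k}$ are, up to sign, the vectors $\bm{\l}_{i,k}$ $(i<k)$ and $\bm{\l}_{k,j}$ $(j>k)$, i.e.\ the $n-1$ mutually independent random vectors in $I^m$ (with $I=\{-\ell,\dots,\ell\}$) indexed by the pairs containing $k$. I would reveal them one at a time and invoke Lemma \ref{countlemma}: conditioned on the first $s$ being linearly independent, the probability that the next vector lies in their span is at most $|I|^{s}/|I|^{m}=|I|^{s-m}$. Since $m\geq n-1$ and $s$ ranges over $0,\dots,n-2$, every such exponent satisfies $s-m\leq -1$, so each conditional failure probability is at most $(2\ell+1)^{-1}$; summing over the $n-1$ steps bounds $\mathbb{P}[\rank(M_{P,k})<n-1]$ by $(n-1)(2\ell+1)^{-1}$, and a further union bound over $k=1,\dots,n$ gives $\rank(M_{P,k})=n-1$ for all $k$ a.a.s. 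I note that rank $n-1\geq 1$ forces $M_{P,k}\neq 0$, hence some $\l_t^{\ell,k}\neq 0$ with $k\neq\ell$, which is exactly the extra hypothesis Proposition \ref{ThetaCor} requires to conclude $Z(G)=\C$ and that $a_\ell$ is c-small. Intersecting the three a.a.s.\ events then establishes the core facts.

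The main obstacle is this rank estimate, and in particular recognizing why $m\geq n-1$ is the correct threshold: it is precisely the condition ensuring that at every stage of building the independent set the ambient dimension $m$ strictly exceeds the current count $s\leq n-2$, which is what makes each conditional failure probability $O(1/\ell)$ rather than bounded away from $0$. When $m<n-1$ the $n-1$ columns cannot be linearly independent at all, which is consistent with that regime being left open in the paper.
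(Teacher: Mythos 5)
Your proposal is correct and follows essentially the same route as the paper: reduce everything to showing a.a.s.\ that $\bm{\l}_{i,j}\neq\mathbf{0}$ and $\rank(M_{P,k})=n-1$ for all $k$, feed this into Proposition \ref{ThetaCor}, and then invoke Theorems \ref{mainThmEq2}, \ref{CSmallMaxZ}, Corollary \ref{RedCor}, and Proposition \ref{dirIndec}. The only (harmless) difference is organizational: the paper lower-bounds the count of good presentations by one sequential construction that keeps all $n$ matrices full-rank simultaneously, whereas you bound the failure probability of each $M_{P,k}$ separately via Lemma \ref{countlemma} and take a union bound over $k$ --- both hinge on the same estimate that a uniform vector in $I^m$ lands in a fixed $s$-dimensional span with probability at most $|I|^{s-m}$.
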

\begin{proof}
Let $G$ and $P$ be as in the statement of the theorem.  Observe that the $m \times (n-1)$ matrices $M_{P,k}$ \eqref{eq6} are precisely the matrices of Corollary \ref{centralizergenerators}. Hence, by Proposition \ref{ThetaCor}, $a_k$ is c-small  provided that $\rank\left(M_{P, k}\right)=n-1$ and that $\l_{t,k,j}\neq0$ for some $t$, $j$.  In this case, by the same proposition, $Z(G)=\C$. Furthermore, since $(A;C)$ is a Malcev basis of $G$,  $[a_i, a_j]\neq 1$ if and only if $\l_{t,i,j}\neq 0$ for some $t$. Thus, to prove the first paragraph of the theorem it suffices to show that, a.a.s.,  $\l_{t,i,j}\neq 0$ for all $t,i,j$, and that $\rank(M_{P, k}) = n-1$ for all $k$.



Note that one may select $P\in \mc{P}(n,m,\ell)$ by first specifying the vectors $\bm{\l}_{1,2}, \dots, \bm{\l}_{1,n}$, then  $\bm{\l}_{2,3}, \dots, \bm{\l}_{2,n}$, then $\bm{\l}_{3,4}, \dots, \bm{\l}_{3,n}$, and so on. In what follows we understand that $P$ is chosen this way. In this case, at the moment when the vector $\bm{\l}_{i,j}$ ($i<j$) is chosen, all vectors $\bm{\l}_{i', j'}$  with $i'<i$ or $i'=i, j'<j$ have already been selected ($i'<j'$).  

We proceed to bound the number of presentations $P\in \mc{P}(n,m,\ell)$ such that $\l_{t,i,j}\neq 0$ for all $t,i<j$, and such that $\rank(M_{P,k})=n-1$ for all $k$.  Denote the set of such $P$'s by $\mc{S}(n,m,\ell)$. 
Observe that $\bm{\l}_{i,j}$ appears only in 
$$
M_{P,i}=\begin{pmatrix} -\bm{\l}_{1,i} & \dots & -\bm{\l}_{i-1,i} & \bm{\l}_{i,i+1} & \dots & \bm{\l}_{i,j} & \dots & \bm{\l}_{i,n} \end{pmatrix}
$$  
(we omit the transpose symbols) and in 
$$
M_{P,j}=\begin{pmatrix} -\bm{\l}_{1,j} & \dots & -\bm{\l}_{i,j} & \dots & -\bm{\l}_{j-1,j} & \bm{\l}_{j,j+1}& \dots & \bm{\l}_{j,n} \end{pmatrix}.
$$  
Denote the vectors preceding $\bm{\l}_{i,j}$ in $M_{P,i}$ and in $M_{P,j}$  by  $\Lambda_{i,j,1}$ and $\Lambda_{i,j,2}$, respectively. 
The method for selecting $P$ described above ensures us that, every time we choose $\bm{\l}_{i,j}$, all the vectors in $\Lambda_{i,j,1}$ and $\Lambda_{i,j,2}$ have already been specified. For this reason, to choose a presentation that belongs to $\mc{S}(n,m,\ell)$, it suffices to select  vectors $\boldsymbol{\l}_{i,j} \in I^m$  in the order explained previously, making sure that, at every step,  $\Lambda_{i,j,1}\cup \{\bm{\l}_{i,j}\}$ and $\Lambda_{i,j,2}\cup \{\bm{\l}_{i,j}\}$ are sets of linearly independent vectors, with  $\Lambda_{i,j,1}$ and $\Lambda_{i,j,2}$ already being linearly independent due to  previous selections. 

Now fix $i<j$. By Lemma \ref{countlemma},  there are at most $L^{|\Lambda_{i,j,1}|}$ vectors $\mathbf{v}$ with entries in $I$ (i.e.\ $\mathbf{v}\in I^m$), such that $\Lambda_{i,j,1} \cup \{\mathbf{v}\}$ is a set of linearly dependent vectors,  where $L=2\ell$, and similarly for $\Lambda_{i,j,2}$. It follows that there are at most $L^{|\Lambda_{i,j,1}|} + L^{|\Lambda_{i,j,2}|}$ vectors $\mathbf{v}\in I^{m}$ such that $\Lambda_{i,j,1} \cup \{\mathbf{v}\}$ or $\Lambda_{i,j,2}\cup \{\mathbf{v}\}$ are linearly dependent. Hence, there are at least $L^m -L^{|\Lambda_{i,j,1}|} -L^{|\Lambda_{i,j,2}|}$ ways to choose $\bm{\l}_{i,j}\in I^m$ such that $\Lambda_{i,j,1}\cup \{\bm{\l}_{i,j}\}$ and $\Lambda_{i,j,2}\cup \{\bm{\l}_{i,j}\}$ are linearly independent. Since $|\Lambda_{i,j,1}|=j-2$ and $|\Lambda_{i,j,2}|=i-1$, 
%
%
$$
|\mc{S}(n,m,\ell)| \geq \prod_{1\leq i<j \leq n} \left(L^m -L^{j-2} -L^{i-1}\right) =p(L).
$$
By hypothesis, $m\geq n-1 > j-2 \geq i-1$ for all $1\leq i<j\leq n$, and hence all factors above are polynomials of degree $m$ with leading coefficient $1$. It follows that $p(L)$ is a polynomial of degree $mn(n-1)/2$, with leading coefficient $1$ as well. 
Notice that $|\mc{P}(n,m,\ell)| = (L+1)^{mn(n-1)/2}$. Therefore  $$\lim_{\ell \to \infty} \frac{|\mc{S}(n,m,\ell)|}{|\mc{P}(n,m,\ell)|}=1.$$  As observed previously, this shows that a.a.s.\ $Z(G)=\C$, all $a_k$'s are c-small,  and  $[a_i, a_j] \neq 1$ for all $i\neq j$.

Finally, observe that the following holds  a.a.s. a) $G$ has a pair of non-commuting c-small elements, namely, $a_1$ and $a_2$ (by what we have proved so far). b) $\mathbb{Z}$ is e-definable in $G$ (by Theorem \ref{mainThmEq2}). c) $\mc{D}(\mathbb{Z})$ is reducible to $\mc{D}(G)$, and $\mc{D}(G)$ is undecidable (by Corollary \ref{RedCor}). d) The maximal ring of scalars of $G$ is $\mathbb{Z}$ (by Theorem \ref{CSmallMaxZ}). e) $G$ is directly indecomposable into non-abelian factors (by Proposition \ref{dirIndec}). This completes the proof  of Theorem \ref{mainthm}.
\end{proof}
%
%
%
%
%
%
%

For the next result, recall that $G$ is  regular if  $Z(G)\leq {\it Is}(G')=\{g\in G\mid g^t \in G' \ \hbox{for some} \ t\in \mathbb{Z}\backslash \{0\}\}$.
\begin{thm}\label{regularitythm}
Let  $G$ be a  $\tau_2$-group obtained by randomly choosing a presentation $P\in \mc{P}(n,m,\ell)$, with $m\geq n-1\geq 1$. Then the following holds a.a.s.\ as $\ell\to \infty$:
\begin{enumerate}
\item If $m\leq n(n-1)/2$, then $G'$ has finite index in $\C$.  If, additionally, $m\geq n-1$, then $G$ is regular.
\item If $m > n(n-1)/2$, then the set $\{[a_i, a_j] \mid i<j\}$ is a basis of $G'$, $G'$ has infinite index in $\C$, and $G$ is not regular. These last two properties hold always, and not only a.a.s.
\end{enumerate}
\end{thm}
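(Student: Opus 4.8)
The plan is to translate everything into the free abelian group $\C=\langle C\rangle$. By Corollary \ref{superCor}, $(A;C)$ is a Malcev basis of $G$, so $\C\cong\mathbb{Z}^m$ is free abelian of rank $m$ and $G'\leq\C\leq Z(G)$. Under the identification of $\C$ with $\mathbb{Z}^m$ via the basis $C$, each commutator $[a_i,a_j]=\prod_t c_t^{\l_t^{ij}}$ corresponds to the vector $\bm{\l}_{i,j}=(\l_{1,i,j},\dots,\l_{m,i,j})\in\mathbb{Z}^m$, and $G'$ is precisely the subgroup generated by the $n(n-1)/2$ vectors $\{\bm{\l}_{i,j}\mid i<j\}$. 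As recalled in the proof of Theorem \ref{SpanThm}, $\rank(G')$ equals the rank of the matrix whose rows are these vectors, i.e.\ the dimension of their $\mathbb{Q}$-span. Hence $[\C:G']$ is finite if and only if the $\bm{\l}_{i,j}$ span $\mathbb{Q}^m$, equivalently $\rank(G')=m$; otherwise $\rank(G')<m$ and $G'$ has infinite index in $\C$.

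For Item 1, assume $m\leq n(n-1)/2$, so there are at least $m$ vectors $\bm{\l}_{i,j}$. I would fix any $m$ of them and, exactly as in the proof of Theorem \ref{mainthm}, select them one at a time, using Lemma \ref{countlemma} to bound the number of choices making the $k$-th selected vector dependent on its predecessors by $|I|^{k-1}$ (with $k\leq m$, so $k-1<m$). The fraction of presentations for which these $m$ vectors are linearly independent is then at least $\prod_{k=1}^m(1-|I|^{k-1-m})\to 1$, so a.a.s.\ $\rank(G')=m$ and $G'$ has finite index in $\C$. For regularity, recall that when $m\geq n-1$ Theorem \ref{mainthm} gives $Z(G)=\C$ a.a.s. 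Whenever $\C/G'$ is finite, every $c\in\C$ satisfies $c^{t}\in G'$ for $t=[\C:G']\neq 0$, so $\C\leq {\it Is}(G')$; intersecting the two a.a.s.\ events $Z(G)=\C$ and $[\C:G']<\infty$ yields $Z(G)=\C\leq {\it Is}(G')$ a.a.s., i.e.\ $G$ is regular a.a.s.

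For Item 2, assume $m>n(n-1)/2$, so the number of vectors $\bm{\l}_{i,j}$ equals $n(n-1)/2<m$. The same application of Lemma \ref{countlemma} shows that a.a.s.\ these vectors are linearly independent; being independent generators of $G'$, they then form a basis of $G'$. The remaining two assertions hold unconditionally. Since $G'$ is generated by fewer than $m$ elements, $\rank(G')\leq n(n-1)/2<m=\rank(\C)$, whence $[\C:G']=\infty$ always. Moreover ${\it Is}(G')\cap\C=\{c\in\C\mid c^{t}\in G' \text{ for some } t\neq 0\}$ is the $\mathbb{Q}$-span of $G'$ intersected with $\C$, so it has rank $\rank(G')<m$ and is a proper subgroup of $\C$. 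Choosing $c\in\C\setminus{\it Is}(G')$ and using $\C\leq Z(G)$ gives $Z(G)\not\leq{\it Is}(G')$, so $G$ is not regular, again always.

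The routine technical core is the two invocations of Lemma \ref{countlemma}, which are essentially identical to the argument already carried out for Theorem \ref{mainthm}; I do not expect any genuine obstacle there. The only points demanding care are bookkeeping ones: confirming that $\mathbb{Q}$-linear independence of integer vectors is the correct criterion both for finite index (Item 1) and for the basis property (Item 2), and that the isolator of $G'$ inside $\C$ has the same rank as $G'$, so that it is a proper subgroup of $\C$ exactly when $\rank(G')<m$. Once the translation to the lattice generated by the $\bm{\l}_{i,j}$ is in place, every step is short.
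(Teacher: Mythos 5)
Your proposal is correct and follows essentially the same route as the paper: both reduce everything to the rank of the integer matrix with rows $\bm{\l}_{i,j}$, invoke Lemma \ref{countlemma} to get full rank $\min\{m,n(n-1)/2\}$ a.a.s., use Theorem \ref{mainthm} for $Z(G)=\C$ in the regularity claim, and derive the unconditional parts of Item 2 from $\rank(G')\leq n(n-1)/2<m$. The only cosmetic difference is in the non-regularity argument, where the paper changes the basis $C$ so the last columns of the matrix vanish and exhibits $c_m\notin {\it Is}(G')$, while you compare the rank of the isolator of $G'$ in $\C$ with $m$; these are the same fact about free abelian groups.
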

\begin{proof}
%
%

As already observed in the proof of Theorem \ref{SpanThm}, the presentation $P$ indicates how to write each commutator $[a_i, a_j]$ as a linear combination of the basis $C$. The matrix $M_P$ consisting on the coefficients of these combinations is precisely the matrix whose rows are the $\bm{\l}_{i,j}$'s. Hence, $\rank(G')= \rank(M_P)$. Let $r=\min\{m, n(n-1)/2\}$ and $L=2\ell+1$. 
We claim that $\rank(M_P)=r$ a.a.s. Indeed, in a similar way as we did in Theorem \ref{mainthm}, and having in mind that $m>r-1$, one  can  repeatedly apply Lemma \ref{countlemma} to see that there are at least 
\begin{equation}\label{eq4}
p(L)=L^m (L^m -L)\dots (L^m - L^{r-1}) L^{m(-r + n(n-1)/2)}
\end{equation}
ways of choosing all the $\bm{\l}_{i,j}$'s (i.e.\ of choosing $P$) so that $\rank(M_P)=r$.  
The degree of the polynomial  $p(L)$ is  $mn(n-1)/2$, and its leading coefficient is $1$. On the other hand, there are $L^{mn(n-1)/2}$ ways of selecting $P\in \mc{P}(n,m,\ell)$. Hence, the probability of chossing $P$ such that $\rank(M_P)=r$ tends to $1$ as $\ell \to \infty$, i.e.\ $\rank(G')=\rank(M_P)=r$ a.a.s.

If $r=m\leq n(n-1)/2$, since $\rank(G')=r = m=\rank(\C)$ a.a.s., and $G'\leq \C$, we obtain that $G'$ has finite index in  $\langle C\rangle$ a.a.s. If, additionally, $r\geq m-1$,  then $Z(G)=\C$  by Theorem \ref{mainthm}, and thus $Z(G)^k\leq G'$, a.a.s., where $k$ is the index of $G'$ in $\C$. This proves that, in this case, $G$ is regular a.a.s., completing the proof of Item 1.

Suppose $m>n(n-1)/2=r$. Then, a.a.s., the elements $\{[a_i, a_j] \mid 1 \leq i< j \leq m\}$ are linearly independent as elements from $\C$, and so they form a basis of $G'$. As we saw in the proof of Theorem \ref{SpanThm}, performing column operations on $M_P$ (interchanging two columns, and adding or subtracting one column to another different column) corresponds to changing the basis $C$ of $\C$ by means of Nielsen transformations. 
Therefore, by choosing a new basis $\bar{C}$, we may assume that the last $m-r$ columns of $M_P$  consist entirely of zeroes. In this case all $[a_i,a_j]$'s belong to  $\langle c_1, \dots, c_{r} \rangle$, and hence  $G'$ has infinite index in $\C$. Moreover, since $C$ is a basis of $\C$, no power of $c_m$ equals a linear combination of the $[a_i,a_j]$'s, which implies that $G$ is not regular. Notice that these last two properties hold always, and not only a.a.s. This completes the proof of Item 2. 
%
 %
%
\end{proof}
%

\section{General polycyclic models}

In this section we introduce natural models of   random  polycyclic groups and random finitely generated nilpotent groups, of any nilpotency step and possibly with torsion. These  are  based on choosing random polycyclic or nilpotent presentations, respectively.  We quickly see, however, that  the model of random f.g.\ nilpotent groups   yields finite groups a.a.s. This is the reason why we particularized it to the class of  $\tau_2$-groups in the previous sections. On the other hand, the model of random polycyclic groups yields groups with finite abelianization a.a.s. This could be the starting point of future work on random such groups.

We follow \cite{Handbook1} for the following definitions. A group $G$ is called \emph{polycyclic} if it admits a finite subnormal series with cyclic factors.  A presentation $\langle x_1, \dots, z_n \mid R\ \rangle$ is called a \emph{polycyclic presentation} if there exists a sequence $S=(s_1, \dots, s_n)$ with $s_i\in \mbb{N} \cup \{\infty\}$ and integers $a_{i,k}, b_{i,j,k}, c_{i,j,k}$ such that $R$ consists in the following relations: 
\begin{align}
&x_i^{s_i}= R_{i,i} \ \hbox{with} \ R_{i,i}= x_{i+1}^{a_{i, i+1}} \cdots x_n^{a_{i, n}} \ \hbox{for} \ 1\leq i \leq n \ \hbox{with} \ s_i< \infty, \label{type1}\\
& x_i^{-1}x_j x_i = R_{j,i} \ \hbox{with} \  R_{j,i}=  x_{i+1}^{b_{i,j,i+1}} \cdots x_n^{b_{i,j, n}} \ \hbox{for} \ 1 \leq i < j \leq n, \label{type2}\\
&x_i x_jx_i^{-1}= R_{i,j} \ \hbox{with} \ R_{i,j}= x_{i+1}^{c_{i,j,i+1}} \cdots x_n^{c_{i,j, n}} \ \hbox{for} \ 1 \leq i < j \leq n. \label{type3}
\end{align}
The relations from \eqref{type1} are called \emph{power relations}, and the ones from \eqref{type2} and \eqref{type3} \emph{conjugacy relations}. The presentation above is  a \emph{nilpotent presentation} if 
\begin{align}
&R_{j,i}=  x_j x_{j+1}^{b_{i,j,j+1}} \cdots x_n^{b_{i,j, n}} \label{conj1n} \\
&R_{i,j}= x_j x_{j+1}^{c_{i,j,j+1}} \cdots x_n^{c_{i,j, n}} \label{conj2n}
\end{align}
for all $1 \leq i< j \leq n$.

\begin{lemma}\cite{Handbook1}
Let $G$ be a finitely generated group. Then the following hold:
\begin{enumerate}
	
\item $G$ is polycyclic if and only if it admits a polycyclic presentation.
\item $G$ is nilpotent if and only if it admits a nilpotent presentation.
\item If $G$ is torsion-free and nilpotent, then $G$ admits a nilpotent presentation without power relations, i.e.\  $s_i=\infty$ for all $i=1, \dots, n$.
\end{enumerate}
\end{lemma}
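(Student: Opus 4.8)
The plan is to prove all three items together by building a \emph{polycyclic generating sequence} out of a subnormal series and reading off the required relations, and conversely by recognizing the subgroups generated by the tails of the generators as a polycyclic series. The constructions for Items 2 and 3 are refinements of the one for Item 1, so I would set up the latter carefully and then specialize.

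For the reverse implication of Item 1 (presentation $\Rightarrow$ polycyclic), suppose $G=\langle x_1,\dots,x_n\mid R\rangle$ with $R$ of the form \eqref{type1}--\eqref{type3}. Set $G_i=\langle x_i,x_{i+1},\dots,x_n\rangle$ and $G_{n+1}=1$. The conjugacy relations \eqref{type2} and \eqref{type3} say exactly that $x_i^{\pm1}x_j x_i^{\mp1}\in G_{i+1}$ for every $j>i$, so each $G_{i+1}$ is normalized by $x_i$ and hence $G_{i+1}\trianglelefteq G_i$. Since $G_i/G_{i+1}$ is generated by the image of $x_i$, it is cyclic (of order dividing $s_i$), and thus $G=G_1\unrhd G_2\unrhd\dots\unrhd G_{n+1}=1$ is a subnormal series with cyclic quotients, i.e. $G$ is polycyclic.

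For the forward implication, take a subnormal series $G=G_1\unrhd\dots\unrhd G_{n+1}=1$ with $G_i/G_{i+1}$ cyclic, choose $x_i\in G_i$ projecting to a generator of $G_i/G_{i+1}$, and let $s_i$ be the order (possibly $\infty$) of that generator. Normality of $G_{i+1}$ in $G_i$ lets me rewrite each conjugate $x_i^{\pm1}x_j x_i^{\mp1}$ ($j>i$) as a word in $x_{i+1},\dots,x_n$, producing the conjugacy relations \eqref{type2}--\eqref{type3}; and when $s_i<\infty$ the element $x_i^{s_i}\in G_{i+1}$ gives a power relation \eqref{type1}. The nontrivial point is that these relations \emph{suffice} to present $G$. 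Let $H$ be the group they define and $\pi\colon H\twoheadrightarrow G$ the induced epimorphism. Using the conjugacy and power relations I would show that every element of $H$ collects into a normal form $x_1^{e_1}\cdots x_n^{e_n}$ with $0\le e_i<s_i$ whenever $s_i<\infty$; on the other hand, the chosen series guarantees that in $G$ such normal forms (the polycyclic coordinates) are \emph{unique}. Hence a normal-form word lying in $\ker\pi$ must already be trivial in $H$, so $\pi$ is an isomorphism. Verifying that the collection process terminates and genuinely produces these normal forms is the main obstacle, since one must check that the given relations always allow a generator to be moved past a later one without generating new reductions indefinitely.

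For Item 2, I would choose the series above to be a \emph{central} series refining the lower central series, so that $[G,G_i]\le G_{i+1}$ for all $i$. Then for $j>i$ one has $[x_j,x_i]\in[G_j,G]\le G_{j+1}=\langle x_{j+1},\dots,x_n\rangle$, so $x_i^{-1}x_j x_i=x_j[x_j,x_i]$ has precisely the shape \eqref{conj1n}, and symmetrically for \eqref{conj2n}; conversely, relations of this restricted shape force $[x_j,x_i]\in G_{j+1}$, making the series central and hence $G$ nilpotent. Finally, for Item 3 I would use that a finitely generated torsion-free nilpotent group admits a central series with \emph{infinite cyclic} factors — for instance, the upper central series has finitely generated torsion-free abelian factors, which refine into infinite cyclic pieces. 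Choosing the $x_i$ from such a series makes every $s_i=\infty$, eliminating the power relations \eqref{type1} altogether, as required.
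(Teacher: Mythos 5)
The paper does not prove this lemma: it is quoted from \cite{Handbook1} with no argument given, so there is nothing in the paper to compare your proof against. On its own terms, your sketch is the standard proof (essentially the one in the cited handbook) and is correct in outline: the tail subgroups $G_i=\langle x_i,\dots,x_n\rangle$ give the polycyclic series in one direction (note that both \eqref{type2} and \eqref{type3} are needed, since $x_i$ must normalize $G_{i+1}$ by conjugation in both directions); in the other direction the presentation read off from a cyclic series presents $G$ because collected normal forms exist in the abstractly presented group $H$ while polycyclic coordinates are unique in $G$, forcing the epimorphism $H\to G$ to be injective. You correctly identify the one genuinely technical debt, namely termination of the collection process. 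The two facts you use without justification in Items 2 and 3 are both standard and true: a cyclic refinement of a central series is again central (because $[G,H_t]\le [G,G_j]\le G_{j+1}\le H_{t+1}$ for every inserted term $H_t$ between $G_j$ and $G_{j+1}$), and the upper central factors of a f.g.\ torsion-free nilpotent group are finitely generated and torsion-free, hence refine into infinite cyclic pieces. A fully written-out version would still need to supply the collection lemma, but the architecture is sound.
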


In views of this result, and in a similar fashion to what we did with $\tau_2$-groups, one may model random polycyclic  (or f.g.\ nilpotent, respectively) groups $G$ by fixing a set $X=\{x_1,\dots, x_n\}$ and exponents $S=(s_1, \dots, s_n)$, $s_i \in \mbb{N} \cup \{\infty\}$, and then randomly choosing  the  exponents $a_{i,k}, b_{i,j,k}$ and $c_{i,j,k}$ of a polycyclic (nilpotent) presentation $\langle X\mid R\rangle$. This is done in the same way as for $\tau_2$-groups: the exponents are selected among all integers of magnitude at most $\ell$ (with uniform probability), and $\ell$ is thought of as tending to infinity (see the introduction).  

We assume $n \geq 2$ and $n\geq 3$ for the ``polycyclic and nilpotent models'' above, respectively  - otherwise the resulting groups are always abelian.  $X$ and $S$ remain fixed until the end of the section. The epimorphism that projects from a group onto its abelianization will be denoted by $\ab$.

\begin{lemma}\label{nopol1}
 The model of random nilpotent groups introduced above yields finite groups asymptotically almost surely as $\ell \to \infty$.
\end{lemma}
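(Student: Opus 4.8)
The plan is to reduce finiteness of $G$ to finiteness of its abelianization $G/G'$, and then to establish the latter a.a.s.\ by exhibiting, among the abelianized defining relations, a set of $n$ whose coefficient matrix is upper triangular with a generically nonzero diagonal. I would begin from the standard fact that a finitely generated nilpotent group is finite if and only if its abelianization is finite: each lower central quotient $G_i/G_{i+1}$ is a finitely generated abelian group that is a quotient of a tensor power of $G/G'$, so it is finite once $G/G'$ is, and a nilpotent group with finite lower central quotients is finite. Since $G$ is nilpotent by construction, it suffices to show $G/G'$ is finite a.a.s. Abelianizing the relations \eqref{type1}, \eqref{conj1n} and \eqref{conj2n} gives $G/G' = \mathbb{Z}^n/\langle \text{rows of } M\rangle$ for an integer matrix $M$ with $n$ columns, and $G/G'$ is finite precisely when $M$ has rank $n$ over $\mathbb{Q}$.

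I would then read off the shape of $M$. The power relation \eqref{type1} for $x_i$ yields a row with $s_i$ in column $i$ and $-a_{i,k}$ in columns $k>i$. By contrast, after abelianizing, the conjugacy relation for the generator $x_j$ (either \eqref{conj1n} or \eqref{conj2n}) becomes $\sum_{k>j} b_{i,j,k} x_k = 0$ (resp.\ with the $c$'s), which is supported in columns $\{j+1,\dots,n\}$; since $j\geq 2$ here, every conjugacy row is supported in columns $\{3,\dots,n\}$. Thus columns $1$ and $2$ are touched only by power relations. I single out the following $n$ rows: the power rows of $x_1$ and $x_2$ (present because $s_1,s_2<\infty$), and for each $j=3,\dots,n$ the relation obtained from $x_1^{-1}x_{j-1}x_1=R_{j-1,1}$, namely $\sum_{k=j}^{n} b_{1,j-1,k}x_k=0$, whose lowest-index term is $b_{1,j-1,j}x_j$. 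Listed in the order power-$x_1$, power-$x_2$, then $j=3,\dots,n$, against columns $1,2,\dots,n$, these form an $n\times n$ upper-triangular block with diagonal $s_1, s_2, b_{1,2,3}, b_{1,3,4}, \dots, b_{1,n-1,n}$.

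A union bound finishes the argument. The determinant of this block equals $s_1 s_2 \prod_{j=3}^{n} b_{1,j-1,j}$; the $s_i$ are fixed nonzero integers, and each $b_{1,j-1,j}$ is uniform in $\{-\ell,\dots,\ell\}$, so it vanishes with probability $1/(2\ell+1)$. Hence the determinant is nonzero with probability at least $1-(n-2)/(2\ell+1)\to 1$, so a.a.s.\ $M$ has rank $n$, $G/G'$ is finite, and therefore $G$ is finite.

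The delicate point, and the reason the power relations must be invoked, is the pair of lowest generators $x_1,x_2$: because no conjugacy relation reaches columns $1$ or $2$, these generators can be made torsion only through \eqref{type1}, which is exactly what places $s_1,s_2$ on the diagonal above (were either of these relative orders infinite, the corresponding column of $M$ would vanish and $G/G'$ would be infinite). For $j\geq 3$ the conjugacy relations alone render $x_3,\dots,x_n$ torsion, and this half could alternatively be packaged through the counting Lemma \ref{countlemma}; the genuine work is in checking that a single explicit triangular selection of relations simultaneously controls all $n$ generators.
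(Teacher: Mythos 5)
Your proof is correct and follows the same basic strategy as the paper's: pass to the abelianization $G/G'=\mathbb{Z}^n/\langle \hbox{abelianized relations}\rangle$, observe that the abelianized conjugacy relations $x_{j+1}^{b_{i,j,j+1}}\cdots x_n^{b_{i,j,n}}=1$ force the images of $x_3,\dots,x_n$ to be torsion a.a.s.\ (the paper does this by downward induction on $j$, you by exhibiting a triangular $n\times n$ minor with generically nonzero determinant --- these amount to the same computation), and then invoke the fact that a finitely generated nilpotent group with finite abelianization is finite. The substantive difference is your treatment of $x_1$ and $x_2$, and here your version is the more careful one. Since every conjugacy relation of a \emph{nilpotent} presentation abelianizes to a relation supported on the coordinates $\{j+1,\dots,n\}\subseteq\{3,\dots,n\}$, these relations alone can never make the images of $x_1$ and $x_2$ torsion, and you correctly bring in the power relations \eqref{type1} with $s_1,s_2<\infty$ to supply the two missing rows. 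The paper's proof instead asserts that the induction on the conjugacy relations yields finite order of the image of $x_k$ for \emph{all} $k=1,\dots,n$, which it does not for $k=1,2$; moreover, the remark following the paper's proof (that the conclusion persists when all $s_i=\infty$) is contradicted by exactly your observation, since without power relations $G/G'$ surjects onto $\mathbb{Z}^2$ via $x_1,x_2$. So your argument is sound, with the caveat (which you flag) that the lemma requires $s_1,s_2<\infty$ as a standing hypothesis. One small imprecision in your closing parenthetical: if $s_2=\infty$ but $s_1<\infty$, column $2$ of $M$ does not literally vanish (the power row of $x_1$ contributes $-a_{1,2}$ there); the conclusion that $G/G'$ is infinite still holds, because columns $1$ and $2$ are then met by a single row and so $\rank(M)\leq n-1$.
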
  
\begin{proof}
%
Let $G$ be a group given by randomly choosing the exponents of a nilpotent presentation $\langle X \mid R\rangle$. The conjugacy relations \eqref{conj1n}, \eqref{conj2n} of such presentation state that $[x_j, x_i]=x_j^{-1}R_{j,i}$ and $[x_j, x_i^{-1}]=x_j^{-1}R_{i,j}$ for all $1\leq i< j \leq n$. Thus, in the abelianization of $G$,  $\ab(x_{j}^{-1}R_{j,i})=\ab([x_j,x_i])=1$ and $\ab(x_j^{-1}R_{i,j})=\ab([x_i, x_j])=1$. Taking $j=n-1$ and any $i<j$, $$\ab(x_{n-1}^{-1}R_{n-1, i})= \ab\left(x_n^{b_{i,n-1,n}}\right) =1.$$ It follows that $\ab(x_n)$ has finite order in $G/G'$ a.a.s. Using that $G/G'$ is abelian and a simple induction argument on the identities
$$
\ab(x_j^{-1}R_{j,i})=\ab\left( x_{j+1}^{b_{i,j,j+1}} \cdots x_n^{b_{i,j, n}} \right)=1 \quad \quad (1\leq i<j\leq n)
$$
 we obtain that  $\ab(x_k)$ has finite order for all $k=1, \dots, n$, a.a.s. Since $G/G'$ is abelian and it is generated by the $\ab(x_k)$'s, the whole abelianization $G/G'$ is finite a.a.s. In this case, by  Corollary 9 from Chapter 1 in \cite{Segal}, $G$ is finite.
\end{proof}

We remark that the above result holds even if all  $s_i$'s are fixed to be $\infty$, i.e.\ even if the nilpotent presentations considered have no power relations. 

A very similar approach serves to prove that the corresponding model for polycyclic groups  yields  groups  with finite abelianization a.a.s. This result may be the starting point of future work on random polycyclic groups.

\begin{lemma}\label{nopol2}
The model of random polycyclic  groups introduced above yields groups with finite abelianization a.a.s.	
%
\end{lemma}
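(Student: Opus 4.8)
The plan is to prove that the abelianization $G/G'$ is finite a.a.s. Note that, in contrast with the nilpotent case of Lemma~\ref{nopol1}, we neither need nor can expect $G$ itself to be finite, so no appeal to \cite{Segal} is made here. Writing the abelianization additively and putting $\bar x_k=\ab(x_k)$, the group $G/G'$ is the cokernel of the integer matrix whose rows are the abelianizations of the defining relations. Abelianizing a conjugacy relation \eqref{type2} removes the conjugation and leaves $\bar x_j=\sum_{k=i+1}^{n}b_{i,j,k}\bar x_k$ for every $1\le i<j\le n$, while abelianizing a power relation \eqref{type1} leaves $s_i\bar x_i=\sum_{k>i}a_{i,k}\bar x_k$ whenever $s_i<\infty$. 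It thus suffices to show that a.a.s.\ these rows span a finite-index subgroup of $\mbb{Z}^{n}=\langle\bar x_1,\dots,\bar x_n\rangle$, equivalently that some $n\times n$ submatrix of the relation matrix is nonsingular.

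Following the strategy of Lemma~\ref{nopol1}, I would let the conjugacy relations do most of the work, the difference being that in the polycyclic (rather than nilpotent) case these relations form a genuine, non-triangular linear system. Use only the relations with $i=1$: the power relation \eqref{type1} for $i=1$ contributes the row $(s_1,-a_{1,2},\dots,-a_{1,n})$, and the conjugacy relations \eqref{type2} for $i=1$, $j=2,\dots,n$, contribute the rows of $(\,\mathbf 0\mid I-B\,)$, where $B=(b_{1,j,k})_{2\le j,k\le n}$. Stacking these $n$ rows yields the block upper-triangular matrix
\[
M=\begin{pmatrix} s_1 & * \\ \mathbf 0 & I-B \end{pmatrix},
\qquad \det M = s_1\,\det(I-B).
\]
Since $s_1\in\mbb{N}$ is a fixed nonzero integer, it remains to show that $\det(I-B)\ne0$ a.a.s.

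Here is the crux, and the one genuinely probabilistic step. Regard $\det(I-B)$ as a polynomial of degree $n-1$ in the $(n-1)^2$ variables $b_{1,j,k}$; setting all of them to $0$ gives $\det I=1$, so this polynomial is not identically zero. The Schwartz–Zippel Lemma (or the counting estimate of Lemma~\ref{countlemma}) then bounds the number of integer choices of the $b_{1,j,k}$ in $\{-\ell,\dots,\ell\}$ with $\det(I-B)=0$ by $O(\ell^{(n-1)^2-1})$, which is negligible compared with the $(2\ell+1)^{(n-1)^2}$ total choices. Hence $\det M=s_1\det(I-B)\ne0$ a.a.s., so $M$ is nonsingular, the $n$ chosen relations span a finite-index sublattice of $\mbb{Z}^n$, and $G/G'$ is finite a.a.s. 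The main obstacle is exactly this nonsingularity: unlike the nilpotent conjugacy relations, the polycyclic ones do not decouple into a system solvable by downward substitution, so one cannot simply iterate as in Lemma~\ref{nopol1} and must instead invoke the non-vanishing of a determinant. I would also note that the top generator $x_1$ occurs in no conjugacy relation, so it is controlled solely through its power relation; this is the only place the finiteness of $s_1$ enters, and it is precisely what the row $(s_1,-a_{1,2},\dots,-a_{1,n})$ of $M$ supplies.
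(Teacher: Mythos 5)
Your proof is correct, but it reaches the conclusion by a genuinely different route from the paper's. The paper never forms a relation matrix: it abelianizes \emph{both} conjugacy relations \eqref{type2} and \eqref{type3} for the consecutive pair $(i-1,i)$ and subtracts them, obtaining $\sum_{k\ge i}\left(b_{i-1,i,k}-c_{i-1,i,k}\right)\ab(x_k)=0$ for $i=2,\dots,n$. This \emph{is} a triangular system --- the diagonal coefficients $b_{i-1,i,i}-c_{i-1,i,i}$ are nonzero a.a.s.\ --- so the downward substitution of Lemma \ref{nopol1} does go through; your remark that the polycyclic conjugacy relations cannot be decoupled by back-substitution is true only because you use the relations \eqref{type2} alone and discard \eqref{type3}. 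That choice is what forces you into the $\det(I-B)\neq 0$ computation, which costs a Schwartz--Zippel (or iterated Lemma \ref{countlemma}) estimate where the paper only needs that two independent uniform integers differ a.a.s. On the other hand, your treatment of $x_1$ is more careful than the paper's: the conjugacy relations control only $\ab(x_2),\dots,\ab(x_n)$, and the paper's assertion that $\ab(x_1)$ also has finite order is not supported by the identities it invokes. Your row $(s_1,-a_{1,2},\dots,-a_{1,n})$ supplies exactly the missing ingredient, at the price of assuming $s_1<\infty$ --- an assumption that is in fact necessary for the statement to hold at all (if $s_1=\infty$, setting $x_2=\dots=x_n=1$ kills every relation and exhibits a surjection $G\twoheadrightarrow\mbb{Z}$, so $G/G'$ is infinite for \emph{every} choice of exponents), even though the paper does not state it. In short: your argument is heavier machinery for the $(n-1)\times(n-1)$ block, but it is sound and it makes explicit a hypothesis on $s_1$ that the paper's proof silently needs.
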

\begin{proof}
Let $G$ be a group given by randomly choosing the exponents of a polycyclic presentation $\langle X \mid R\rangle$. The conjugacy relations \eqref{type2}, \eqref{type3} of $R$ in the abelianization of $G$ become $\ab(x_j)=\ab(R_{j,i})$ and $\ab(x_{j})=\ab(R_{i,j})$, for all $1\leq i < j \leq n$.  Taking $i=n-1$ and $j=n$,  $$\ab\left(R_{n,n-1}\right)=\ab\left(x_n^{b_{n-1, n, n}}\right) = \ab\left(x_n^{c_{n-1, n, n}}\right)= \ab\left(R_{n-1, n}\right).$$ It follows that $\ab(x_n)$ has finite order a.a.s. Similarly as before, using induction and the identities $\ab(R_{i,i-1})=\ab(R_{i-1,i})$ for $i=2, \dots, n$, one sees that  $\ab(x_k)$ has finite order in $G/G'$ for all $k=1, \dots, n$, a.a.s., in which  case $G/G'$ is finite because it is abelian and it is generated by $\ab(x_1), \dots, \ab(x_n)$.
\end{proof}

\bibliography{bib}

\end{document}